\newcommand\abs[1]{\lvert{}#1\rvert{}}
\newcommand\Ac{\mathcal{A}}
\newcommand\bott{\mathrm{bot}}
\newcommand\br[1]{\langle{}#1\rangle{}}
\newcommand\cc{\mathcal{C}}
\newcommand\CC{\mathrm{CC}}
\newcommand\CH{\mathrm{CH}}
\newcommand\ch{\mathrm{c}}
\newcommand\contr{\mathrm{contr}}
\newcommand\C{\mathbb{C}}
\newcommand\defeq{\mathrel{\vcenter{\baselineskip0.5ex \lineskiplimit0pt \hbox{.}\hbox{.}}} =}
\newcommand\defword[1]{\ul{#1}}
\newcommand\dr{\dv{r}}
\newcommand\ds{\dv{s}}
\newcommand\dt{\dv{t}}
\newcommand\du{\dv{u}}
\newcommand\dv{\mathrm{d}}
\newcommand\dx{\dv{x}}
\newcommand\dy{\dv{y}}
\newcommand\dz{\dv{z}}
\newcommand\enumbelow{\ }
\newcommand\floor[1]{\lfloor{#1}\rfloor}
\newcommand\Hm{\operatorname{H}}
\newcommand\hto{\hookrightarrow}
\newcommand\id{\mathrm{id}}
\newcommand\im{\operatorname{im}}
\newcommand\inc{\mathrm{inc}}
\newcommand\intprod{\mathbin{\lrcorner}}
\newcommand\I{^{-1}}
\newcommand\killenumspace{\ \\\vspace{-10pt}}
\newcommand\K{\mathcal{K}}
\newcommand\Ldv{\mathcal{L}}
\newcommand\loc{\mathrm{loc}}
\newcommand\Mbar{\overline{\M}}
\newcommand\M{\EuScript{M}}
\newcommand\new{\mathrm{new}}
\newcommand\old{\mathrm{old}}
\newcommand\Op{\mathcal{O}p}
\renewcommand\phi{\varphi}
\newcommand\plr[1]{\left(#1\right)}
\newcommand\pt{\mathrm{pt}}
\newcommand\Q{\mathbb{Q}}
\newcommand\Rb{\mathrm{R}}
\newcommand\rIII{\mathrm{III}}
\newcommand\rII{\mathrm{II}}
\newcommand\rIV{\mathrm{IV}}
\newcommand\rI{\mathrm{I}}
\newcommand\rstr[2]{{\left.#1\right|_{#2}}}
\newcommand\R{\mathbb{R}}
\newcommand\sch[1]{\{#1\}}
\newcommand\set[1]{\{#1\}}
\newcommand\SSS{\EuScript{S}}
\newcommand\std{\mathrm{std}}
\newcommand\sut{\mathrm{s}}
\newcommand\s{\mathrm{S}}
\newcommand\Tn{\mathrm{T}}
\newcommand\toi{\xrightarrow{_\sim}}
\newcommand\topp{\mathrm{top}}
\newcommand\tox[2][]{\xrightarrow[#1]{#2}}
\newcommand\toxlim[1]{\tox{#1\to\infty}}
\newcommand\T{\mathbb{T}}
\newcommand\vir{\mathrm{vir}}
\newcommand\wh[1]{\widehat{#1}}
\newcommand\wt[1]{\widetilde{#1}}
\newcommand\Z{\mathbb{Z}}
\theoremstyle{definition}
\newtheorem{defn}{Definition}[section]
\newtheorem{rmk}[defn]{Remark}
\theoremstyle{remark}
\theoremstyle{plain}
\newtheorem{thm}[defn]{Theorem}
\newtheorem{propn}[defn]{Proposition}
\newtheorem{lem}[defn]{Lemma}
\newtheorem{cor}[defn]{Corollary}
\newtheoremstyle{dotless}{}{}{}{}{\bfseries}{}{ }{\thmname{#1}\thmnumber{ #2}\thmnote{#3}}
\theoremstyle{dotless}
\newtheorem{setup}{Setup}
\let\oldproofname=\proofname
\renewcommand{\proofname}{\rm\bf{\oldproofname}}
\title{Exotic tight contact structures on $\R^n$}
\author{François-Simon Fauteux-Chapleau and Joseph Helfer}
\date{}
\begin{document}
\maketitle

\begin{abstract}
We introduce a variant of contact homology for convex open contact manifolds. As an application, we prove the existence of (in fact, infinitely many) exotic tight contact structures on $\R^{2n-1}$ for all $n>2$.
\end{abstract}

\tableofcontents

\section{Introduction}
The main goal of this paper is to extend contact homology (see \cite{eliashberg-invariants_in_contact,eliashberg-givental-hofer-intro-to-sft,pardon-contact-homology-and-vfc}), which is normally defined for \emph{closed} (co-oriented) contact manifolds, to certain open contact manifolds, namely those that are \emph{convex} in the sense of \cite[\S3.5]{eliashberg-gromov-convex}. As an application, we prove:

{
\begin{thm}[see Theorem~\ref{thm:exotic}]\label{thm:exotic-intro}
  For each $n>2$, there exists a tight contact structure on $\R^{2n-1}$ that is not isomorphic to $\R^{2n-1}_\std$. In fact, there exist infinitely many pairwise non-isomorphic tight contact structures on $\R^{2n-1}$.
\end{thm}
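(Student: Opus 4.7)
The plan is to produce candidate exotic tight contact structures on $\R^{2n-1}$ by puncturing suitable closed contact manifolds and to distinguish them using the convex contact homology $\CH$ introduced in this paper.

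\textbf{Construction.} For every $n > 2$, the sphere $S^{2n-1}$ is known to carry infinitely many pairwise non-isomorphic tight contact structures $\{\xi_i\}_{i \in \mathbb{N}}$; in dimensions $4k+1$ one can take Ustilovsky's family of Brieskorn contact structures, and analogous Brieskorn constructions cover the remaining dimensions. For each $i$ I would fix a Darboux point $p_i \in S^{2n-1}$, set $M_i \defeq S^{2n-1} \setminus \{p_i\}$, and write $\eta_i$ for the contact structure on $M_i$ obtained by restricting $\xi_i$. Then $M_i \cong \R^{2n-1}$ smoothly, and small spheres shrinking toward $p_i$ are transverse to a radial contact vector field that retracts $M_i$ onto a compact core; this exhibits each $(M_i, \eta_i)$ as a convex open contact manifold in the sense of Eliashberg--Gromov, with the standard $\R^{2n-1}_\std$ arising precisely from $\xi_\std$ on $S^{2n-1}$.

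\textbf{Invariant and distinguishing.} The central technical input would be a comparison (or excision) theorem stating that $\CH(M_i, \eta_i)$ recovers, or at least retains enough information from, the classical contact homology of $(S^{2n-1}, \xi_i)$: any discrepancy should come from a universal contribution of the excised Darboux ball and be independent of $i$. Given such a statement, the $(M_i, \eta_i)$ are pairwise non-contactomorphic because the closed contact homologies of the above families are pairwise distinct (they differ, for example, in the number of generators of a given degree). Tightness of each $\eta_i$ would then follow from a non-vanishing principle in the spirit of Bourgeois--Niederkr\"uger: overtwistedness kills $\CH$, so non-vanishing forces tightness.

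\textbf{Main obstacle.} The hardest step will be establishing the comparison between $\CH$ of the punctured manifold and $\CH$ of the closed manifold. This amounts to controlling holomorphic curves that escape into the cylindrical end at the puncture, and it relies on the compactness and transversality framework built up in the preceding sections of the paper. Once the comparison is in place, the theorem follows formally from the invariance of convex contact homology under contactomorphism together with the classical distinctness of the $\xi_i$.
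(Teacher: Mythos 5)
Your outline lands on the right raw ingredients (Brieskorn spheres, puncturing, the new invariant $\CH$) but the central step you flag as ``the hardest'' — a comparison or excision theorem relating $\CH$ of the punctured manifold to the classical contact homology of the closed one, with a universal correction from the excised ball — is exactly what the paper does \emph{not} prove, and there is no indication that such a clean comparison is available. This is a genuine gap: you assert that the discrepancy ``should'' be a universal contribution independent of $i$, but nothing in the machinery developed in the paper supplies that. Instead, the paper sidesteps any comparison with closed $\CH$ entirely. It models $M\setminus\{\pt\}$ as the contact connected sum $M\#\R^{2n-1}_{\std}$, invokes Ustilovsky's connected-sum theorem (Theorem~5.2.1 of his thesis) to obtain an explicit description of the Reeb orbits of a suitable admissible contact form on the sum (essentially those of $M$ plus an extra odd-degree orbit in each degree from the neck), and then runs a direct combinatorial argument on the resulting graded chain complex: since the degrees are known and there are no orbits below degree $2n-4$, a counting argument forces nontrivial homology in some positive degree, for \emph{any} differential. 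Proposition~\ref{propn:Rn-triv-contact-homology} (standard $\R^{2n-1}$ admits an admissible form with no closed Reeb orbits at all, so $\CH\cong\Q$) then gives exoticness.

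Two further mismatches worth noting. First, tightness: you propose a Bourgeois--Niederkr\"uger-style ``overtwisted implies vanishing $\CH$'' argument, which would require a separate vanishing theorem for this new open-manifold invariant (not established in the paper) and risks circularity with the distinguishing step; the paper instead observes tightness directly from symplectic fillability of the Brieskorn manifold and the elementary fact that open subsets of tight manifolds are tight. Second, the ``infinitely many'' clause: you would deduce it from the classical pairwise distinctness of Ustilovsky's family, which again requires the unproved comparison theorem; the paper instead takes connected sums $(\#_{i=1}^r M)\#\R^{2n-1}_{\std}$ of $r$ copies of a single $M$ and shows via an Euler-characteristic estimate that the total rank of $\CH$ in a fixed degree window grows linearly in $r$, distinguishing them without ever comparing to the closed invariant. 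Also note that in dimensions $4m+3$ the Brieskorn manifold $\Sigma(q,p,2,\dots,2)$ used is only \emph{homeomorphic}, not diffeomorphic, to $S^{2n-1}$ (it is an exotic sphere), so the ``puncture a sphere'' framing needs care; puncturing still yields standard smooth $\R^{2n-1}$ by Stallings, but your phrasing in terms of contact structures on $S^{2n-1}$ is not literally available there.
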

}
We also obtain a similar result for certain hypertight contact manifolds (see Theorem~\ref{thm:hypertight}).

We note that the statement of Theorem~\ref{thm:exotic-intro} is false for $n=2$ by \cite[Theorem~2.1.4]{eliashberg-20-years}.
A proof of part of Theorem~\ref{thm:exotic-intro} (one exotic contact structure in $\R^n$ for $n\equiv 1\ \mathrm{mod}\ 4$) has since been given independently by Uljarević \cite{uljarevic-squeezing} using different methods: there, the theorem is deduced from a non-squeezing property for balls in Brieskorn spheres (see below), which in turn is proven using a new version of symplectic homology.

Before discussing how this theorem is proved, let us give some more general context. A vector field $X$ in a contact manifold $(V,\xi)$ is \emph{contact} if the flow (locally) defined by $X$ is by contactomorphisms. A hypersurface $\Sigma\subset V$ in a contact manifold $(V,\xi)$ is \emph{convex} if there exists a contact vector field $X$ in a neighbourhood of $\Sigma$ which is transverse to $\Sigma$. A contact manifold has \emph{convex boundary} if its boundary is a convex hypersurface, and a contact manifold is \emph{convex} if it is isomorphic to the interior of a compact contact manifold with convex boundary.%
\footnote{We use a different but equivalent definition of convexity below; see Remark~\ref{rmk:convexity-definitions}.}

We emphasize that while the version of contact homology in this paper is only defined for open contact manifolds which \emph{can be} presented as the interior of a contact manifold with convex boundary, it is an invariant of the open contact manifold alone, and does not depend on such a presentation.
This crucial fact is what allows us to use this invariant to distinguish contact structures on open manifolds such as $\R^n$, and the bulk of the technical work in this paper is dedicated to proving it.

In \cite{colin-others-sutures-and-ch}, contact homology is defined for so-called \emph{sutured contact manifolds}, which are certain contact manifolds with boundary and corners.
Sutured contact homology, as defined, depends on the convex structure of the boundary, and hence cannot be used to distinguish open contact manifolds.
On the other hand, it seems plausible that sutured contact homology is isomorphic to the invariant defined in the present paper (see the discussion in \S\ref{subsec:sutured}).

We note that \cite{eliashberg-kim-polterovich-squeezing} also define contact homology for a certain class of open contact manifolds, but those ones are \emph{not} convex.

Returning to Theorem~\ref{thm:exotic-intro}, the exotic contact structures in question are obtained by removing a point from certain ``Brieskorn spheres'', which are themselves known to have exotic contact structures (see \cites{ustilovsky-infinitely-many-spheres}[\S1.9.3]{eliashberg-givental-hofer-intro-to-sft}). As is done in \cite{ustilovsky-infinitely-many-spheres}, we show that the resulting contact structure is non-standard by showing it has non-trivial contact homology (unlike the standard contact structure). This is done via a computation analogous to the one showing that the Brieskorn sphere has non-standard contact homology, where we must now also analyze (to the extent needed to show non-triviality) how the contact homology changes upon removing a point (see \S\ref{sec:exotic-contact-strucs}).

Now let us discuss the adjective ``tight'' used in Theorem~\ref{thm:exotic-intro}. A basic dichotomy for 3-dimensional contact manifolds is between ``tight'' and ``overtwisted'' ones (see \cite[\S2]{eliashberg-invariants_in_contact}). For open manifolds, the overtwisted structures are further divided into those that are overtwisted ``at infinity'' (i.e., when restricted to every complement of a compact set) or ``tight at infinity''. The basic result about overtwisted (or, in the open case, overtwisted at infinity) contact structures is that they are ``flexible'' (or satisfy an ``h-principle'') in the sense that each homotopy class of hyperplane fields contains a unique overtwisted (or overtwisted-at-infinity) representative, whereas the tight ones are ``rigid'' -- a homotopy class of hyperplane fields can contain 0 or multiple tight contact structures.

The notion of overtwistedness was extended to higher-dimensional contact manifolds in a natural way in \cite{borman-eliashberg-murphy-overtwisted}, and in particular they prove an analogous homotopy-classification result (so that, in particular, there exist overtwisted contact structures on $\R^{2n-1}$), and also that the standard contact structure on $\R^{2n-1}$ is tight (the corresponding fact for $\R^3$ is \emph{Bennequin's theorem}, \cite[\S2]{eliashberg-invariants_in_contact}).

Since overtwisted structures are known to exist, the novelty of Theorem~\ref{thm:exotic-intro} is that exotic \emph{tight} contact structures exist -- or rather, that the known tight contact structures, obtained by puncturing Brieskorn manifolds, are exotic.

\textbf{Acknowledgments:}
We thank Yasha Eliashberg for suggesting this problem and for his guidance throughout the project. We are also grateful to John Pardon, Kyler Siegel, Chris Wendl, Dylan Cant, François Greer, and Abigail Ward for helpful discussions, and to John Morgan for suggesting a simplification in the computation in \S\ref{sec:exotic-contact-strucs}.
A preliminary version of this paper appeared as part of the second author's Ph.D. thesis.
We also thank the anonymous referees for helpful recommendations which improved the paper.

\subsection{Recollections on contact homology}\label{sec:recollections-on-ch}
We take \cite{pardon-contact-homology-and-vfc} as our reference for the definition of contact homology (of a closed, co-oriented contact manifold). Let us recall the basic elements here.

Given a closed, co-oriented contact manifold $(V,\xi)$, its \emph{contact homology algebra} is a supercommutative $\Q$-superalgebra $\CH_\bullet(V,\xi)$. This object is really only well-defined up to isomorphism, in the sense that the construction involves the choice of some additional data $X$, but associated to any two such choices $X$ and $X'$ is an isomorphism between the resulting objects (and moreover, this gives a ``connected simple system'' in the sense of \cite[\S5.3]{conley-isolated_sets}, i.e., a functor from the category with objects the various $X$ and with a unique (iso)morphism between any two objects).\footnote{As is done in \cite{pardon-contact-homology-and-vfc}, one can then formally obtain a canonical object $\CH_\bullet(C,\xi)$ as a quotient of a disjoint union over all possible $X$, i.e., by taking the colimit of the just-mentioned functor.}

Specifically, the choices involved are: a representing one-form $\alpha$; a complex structure $J$ on $\xi$; and the choice of a so-called ``perturbation datum'' $\theta$, which is needed to define the ``virtual fundamental cycles'' for the moduli spaces coming in to the definition of $\CH_\bullet$ (the details of which will need not concern us). For a fixed choice of such data, the resulting object is denoted $\CH_\bullet(V,\xi)_{\alpha,J,\theta}$.

The proof that these are isomorphic for different choices of $(\alpha,J,\theta)$ results from a more general functoriality of these objects with respect to certain symplectic cobordisms; by taking ``trivial'' cobordisms (but with different choices of $(\alpha,J,\theta)$ on the two ends), one obtains the required isomorphisms.

The only way in which the assumption that $V$ is compact comes into the above construction is when proving that the relevant moduli spaces of holomorphic curves are compact. In order to achieve this in our situation, we will have to restrict our attention to certain ``admissible'' $\alpha$ and $J$, and to a certain limited class of cobordisms, which will in particular only allow us to compare contact forms adapted to the same convex structure. In \S\ref{sec:defining-ch}, we show how to circumvent this and compare different convex structures, by considering certain contact forms which interpolate between two given convex structures.

But first, in \S\ref{sec:preparations}, we will introduce the notions of admissible contact forms and almost complex structures and the appropriate notion of symplectic cobordism, and prove the basic properties we need about them, in particular the compactness results.

We note that the contact homology of a convex open $V$ we define in this paper will be (isomorphic to) $\CH_\bullet(V,\xi)_{\alpha,J,\theta}$ for \emph{any} admissible choice of $\alpha$ (and $J$ and $\theta$).
This is in contrast to the situation with symplectic homology, in which one first introduces the notion of admissible Hamiltonian, but must then pass to a limit over all admissible Hamiltonians.

\subsection{Preliminaries and notation}
All differential-geometric objects (manifolds, differential forms, etc.) are assumed to be smooth unless declared otherwise.

\subsubsection{Contact manifolds}
For basic background about contact structures, see, e.g., \cite{geiges-intro-contact}. By a \emph{contact structure}, we will always mean a \emph{co-oriented} contact structure. Given a contact structure $\xi$, we say that a one-form $\alpha$ \emph{represents} $\xi$ if $\ker(\alpha)=\xi$ and $\alpha$ induces the given co-orientation. Note that for any two representing contact forms $\alpha$ and $\alpha'$, we have that $\alpha=g\alpha'$ for some positive function $g$, which we denote by $\frac{\alpha}{\alpha'}$, and we say that $\alpha\ge\alpha'$ if $\frac{\alpha}{\alpha'}\ge1$.

\subsubsection{Pulled back forms}
Given a manifold $U$ which is a product $\br{\pi,t}\colon U\toi\Sigma\times I$ of a manifold $\Sigma$ with an interval, and a differential form $\alpha$ on $\Sigma$, we may just write $\alpha$ for the pullback $\pi^*\alpha$.

\subsubsection{Convex hypersurfaces and convex structures}
A hypersurface $\Sigma$ in a contact manifold $V$ is \defword{convex} if it is transverse to some contact vector field $Y$.

The subset $\Gamma=\Gamma_{Y}=\set{p\in\Sigma\mid Y_p\in\xi_p}\subset\Sigma$ is called the \defword{dividing set of $\Sigma$}. It is known to be a smooth hypersurface (cut out transversely by $u=Y\intprod\alpha$ for any $\alpha$ representing $\xi$, see e.g. \cite[\S2.2]{colin-others-sutures-and-ch}).

We denote by $\Sigma^{+,Y}$ and $\Sigma^{-,Y}$ (or just $\Sigma^+$ and $\Sigma^-$) the subsets of $\Sigma\setminus\Gamma$ where $Y$ lies, respectively, on the positive or negative side of $\xi$.

A \defword{finite-type convex structure} $(X,t)$ on $V$ is a complete contact vector field $X$ on $V$ together with a smooth proper function $t$ such that $X\intprod\dt=1$ outside of some compact subset. We will omit ``finite type'', as this is the only kind of convex structure we will consider.
If such an $(X,t)$ exists, we say that $V$ is a \defword{convex open contact manifold}.

For sufficiently large $t_0$, each level-set $\Sigma_{t_0}=\set{t=t_0}$ is then a convex hypersurface, and the various $\Sigma_{t_0}$ are canonically isomorphic (via flowing along $X$) for different values of $t_0$. Hence we can identify them all with a fixed manifold $\Sigma$ endowed with a partition $\Sigma^+\cup\Sigma^-\cup\Gamma$.

We will use $\Op_V(\infty)$ (or just $\Op(\infty)$) to denote a subset $\set{t>T}$ (which has compact complement) for sufficiently large $T$ (see \S\ref{subsubsec:symp-cobords} for a related notation). We have a projection $\pi=\pi^\Sigma\colon\Op_V(\infty)\to\Sigma$, as just described, giving a product decomposition $\br{\pi,t}\colon\Op_V(\infty)\to\Sigma\times(T,\infty)$.

We will in general use the notation $\Sigma_{t_0},\Sigma,\Gamma,\pi$ when dealing with a fixed convex structure $(X,t)$ without further explanation.

\begin{rmk}\label{rmk:convexity-definitions}
  As mentioned in the introduction, the above notion of convexity is equivalent to $V$ being the interior of a compact contact manifold $\overline V$ with convex boundary $\Sigma=\partial V$.
  Let us briefly explain how this is established.

  Given $\overline V$, let $\br{\pi,t}\colon\Sigma\times(-\varepsilon,0]\toi\Op(\partial V)$ be a collar neighbourhood of the boundary induced by a transverse contact vector field $X$, so that $X=\partial_t$.
  Recall now that to any ``contact Hamiltonian'' (i.e., smooth function) $H$ on a contact manifold with contact form $\alpha$, there is associated a vector field $X_H$, determined by the conditions $X_H\intprod\alpha=H$ and $\rstr{(X_H\intprod\dv\alpha)}{\ker\alpha}=-\rstr{\dv H}{\ker\alpha}$; and conversely, every contact vector field arises in this way.

  By Lemma~\ref{lem:t-invt-form} below, we have a representing contact form $\alpha=\beta+u\dt$ on $\Op(\partial V)$ with $\beta\in\Omega^{1}(\Sigma)$ and $u\in\Omega^0(\Sigma)$.
  With respect to $\alpha$, the contact Hamiltonian inducing $\partial_t$ is $\partial_t\intprod\alpha=u$.

  But now for any smooth function $\theta(t)$, the contact Hamiltonian $u\cdot\theta$ induces a vector field of the form $\theta\partial_t+Y$ with $Y\intprod\dt=0$ (one can show this by arguing separately (i) away from $\Gamma=u\I(0)$, using the contact form $\alpha'=u\I\alpha=\dt+u\I\beta$ (and using that $\dv(u\I\beta)\in\Omega^2(\Sigma)$ is symplectic), and (ii) near $\Gamma$, using a form $\beta_0+s\dt$ as in Lemma~\ref{lem:normal-form-near-gamma} below).

  This proves the claim, since the vector field $Y$ associated to a non-vanishing function $\theta(t)$ with $\theta(t)=-t$ for $t\in(-\varepsilon/2,0)$ and $\theta(t)=1$ for $t\le-\varepsilon$, together with an appropriate function $s$, gives a convex structure $(Y,s)$ on $V=\operatorname{int}\overline V$.
  (To go in the opposite direction, given a convex structure $(X,t)$ on $V$, applying the above construction to $\overline V=\set{t\le T}$ for $T$ large enough shows that $V\cong\operatorname{int}\overline{V}$.)
\end{rmk}

\subsubsection{Symplectizations}
Given a (co-oriented, as always) contact manifold $(V,\xi)$, we will write $(\s V,\hat\omega)$  for its symplectization (see, e.g., \cite[\S1.3]{eliashberg-kim-polterovich-squeezing}), $\hat\lambda$ for its canonical Liouville form (so $\hat\omega=\dv\hat\lambda$) and $\pi=\pi^V\colon\s V\to V$ for the projection. Any choice of contact form $\alpha$ representing $\xi$ induces a diffeomorphism $\br{r,\pi}\colon\s V\to \R\times V$ such that $\hat\lambda=e^r\alpha$ and (hence) $\hat\omega=\dv(e^r\alpha)$ (where $\alpha=\pi^*\alpha$). When we have fixed such an $\alpha$, we will typically write $r$ for the corresponding coordinate, which we call a \defword{symplectization coordinate}. We also recall that the action of $\R$ on $\s V\cong\R\times V$ by translation is canonical -- i.e., independent of the chosen symplectization coordinate.

Given any contact embedding $i\colon(V_1,\xi_1)\to(V_2,\xi_2)$, there is an induced strict exact symplectic embedding $\s i\colon\s V_1\to\s V_2$ (``strict exact'' meaning $(\s i)^*\lambda_2=\lambda_1$). In symplectization coordinates induced by forms $\alpha_1$ and $\alpha_2$, it is given by $(\s i)(r,x)=(r-F(x),i(x))$, where $i^*\alpha_2=e^F\cdot\alpha_1$.

Recall that, given a contact form $\alpha$ representing $\xi$, any complex structure $J$ on $\xi$ which is \emph{compatible} with $\dv\alpha$ (in the usual sense that $\dv\alpha(-,J-)$ is symmetric and positive-definite) determines an $r$-invariant $\hat\omega$-compatible almost complex structure on $\s V$ which we will denote by $\hat{J}$, by requiring $\hat J\partial_r=\Rb_{\alpha}$ and $\rstr{\hat J}{\set{r_0}\times V}=J$.

We will sometimes also use $\hat{J}$ to denote an almost complex structure on a symplectic cobordism which is \emph{not} necessarily induced from another almost complex structure $J$ (see \S\ref{subsubsec:J-on-cobords}).

\section{Preparations for the definition of contact homology}\label{sec:preparations}
\subsection{Admissible contact forms}
We now introduce the class of contact forms we will be considering for the definition of contact homology. The purpose of the first three conditions in this definition is to give us control over the Reeb orbits, the fourth condition will allow us to prove compactness of the relevant moduli spaces of holomorphic curves, and the somewhat unnatural fifth condition is included for the sole purpose of being able to prove Lemma~\ref{lem:scaling-form} below.

Fixing a convex structure $(X,t)$ on $(V,\xi)$, the conditions below on the Reeb vector field $\Rb_\alpha$ involve a certain partition of $\Op_V(\infty)$ into three subsets. Namely, the partition $\Sigma=\Sigma^+\cup\Sigma^-\cup\Gamma$, actually comes from a partition $\Op_V(\infty)=\set{u>0}\cup\set{u<0}\cup\set{u=0}$, where $u=X\intprod\alpha$, in which $\set{u=0}$ is a smooth hypersurface. The conditions demand that $\Rb_\alpha$ point ``inward'' on $\set{u<0}$, ``outward'' on $\set{u>0}$, and point from $\set{u<0}$ to $\set{u>0}$ on $\set{u=0}$.

\begin{defn}\label{defn:admissible}
  Let $(X,t)$ be a convex structure for $V$. We say that a contact form $\alpha$ representing $\xi$ is \defword{$(X,t)$-admissible} if the following conditions hold for any sufficiently large $T>0$:
  \begin{enumerate}[(i)]
  \item\label{item:admissible-nondegen} $\alpha$ is non-degenerate (in the usual sense, as in \cite[\S1.2]{pardon-contact-homology-and-vfc}).
  \item\label{item:admissible-s} Letting $u=X\intprod\alpha$, we have $u\cdot(\Rb_{\alpha}\intprod\dt)>0$ on $\set{t>T}\cap\set{u\ne0}$.
  \item\label{item:admissible-t} With $u$ as above, $\Rb_\alpha$ is positively transverse to the hypersurface $\set{u=0}\cap\set{t>T}$ (``positively'' in the sense that $\Rb_{\alpha}\intprod\du>0$).
  \item\label{item:admissible-converge} For any $L>0$, the restrictions $\rstr{\alpha}{\set{t_0<t<t_0+L}}$, which can be viewed (for $t_0$ large enough) as contact forms on $\Sigma\times(0,L)$, converge in $\cc^\infty$ to a contact form on $\Sigma\times(0,L)$ as $t_0\to\infty$.
  \item\label{item:admissible-scale} For any smooth, positive, non-decreasing function $g(t)$, the above conditions \ref{item:admissible-s}~and~\ref{item:admissible-t} hold with $\alpha$ replaced by $g(t)\cdot\alpha$.
  \end{enumerate}
\end{defn}

\begin{rmk}\label{rmk:alpha-admiss-infty}
  Note that the condition of being $(X,t)$-admissible only depends on the restriction of $(X,t)$ to $\Op_V(\infty)$.
\end{rmk}

\begin{rmk}\label{rmk:admiss-rel-bounded}
  For any two $(X,t)$-admissible contact forms $\alpha$ and $\alpha'$, it follows from condition \ref{item:admissible-converge} that the function $\log(\frac{\alpha}{\alpha'})$ is bounded (i.e., $\frac{\alpha}{\alpha'}$ is bounded above and away from 0).
\end{rmk}

\begin{propn}\label{propn:admiss-reeb-orbits}
  If $\alpha$ satisfies conditions~\ref{item:admissible-s}~and~\ref{item:admissible-t} above for some $T>0$, then all of its Reeb orbits are contained in $\set{t<T}$.
\end{propn}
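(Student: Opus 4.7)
My plan is a proof by contradiction. Suppose there is a closed Reeb orbit $\gamma$ with $t_{\max}\defeq\max_s t(\gamma(s))\ge T$, and let $s_0$ be a time at which this maximum is attained, $p\defeq\gamma(s_0)$. Since $t\circ\gamma$ is maximized at $s_0$ we have $(\Rb_\alpha\intprod\dt)|_p=0$. The strategy is to use conditions \ref{item:admissible-s} and \ref{item:admissible-t} to produce a point on $\gamma$ just after $s_0$ at which $t\circ\gamma$ is strictly increasing, contradicting maximality at $s_0$.

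I would first handle the main case $t_{\max}>T$. Then $p\in\set{t>T}$, and since $(\Rb_\alpha\intprod\dt)|_p=0$, condition \ref{item:admissible-s} forces $u(p)=0$ (otherwise the product $u\cdot(\Rb_\alpha\intprod\dt)$ would vanish at $p$ instead of being strictly positive). So $p\in\set{u=0}\cap\set{t>T}$, and condition \ref{item:admissible-t} gives $(\Rb_\alpha\intprod\dv u)|_p>0$, i.e.\ $u\circ\gamma$ is strictly increasing at $s_0$. Hence there is an $\epsilon>0$ with $u(\gamma(s))>0$ for all $s\in(s_0,s_0+\epsilon)$, and using $t_{\max}>T$ and continuity I may shrink $\epsilon$ so that also $t(\gamma(s))>T$ on that interval. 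A second application of \ref{item:admissible-s}, now at the points $\gamma(s)$ for $s\in(s_0,s_0+\epsilon)$, yields $\Rb_\alpha\intprod\dt>0$ there, so $(t\circ\gamma)'>0$ just after $s_0$, which contradicts $s_0$ being a maximum.

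The main obstacle I anticipate is the borderline case $t_{\max}=T$, where neither \ref{item:admissible-s} nor \ref{item:admissible-t} is directly applicable at $p$ itself. I would handle this by promoting the two sign conditions to their non-strict versions on the closure $\overline{\set{t>T}}=\set{t\ge T}$ by continuity, and observing that the same flow-line analysis still goes through: a closed Reeb orbit tangent to $\Sigma_T$ from below at a point $p$ still has $u(p)=0$ (by the weak version of \ref{item:admissible-s}), and then \ref{item:admissible-t}, which remains strict at $\set{u=0}\cap\set{t>T}$ and hence has a non-trivial positive limit at points of $\set{u=0}\cap\set{t=T}$, again propagates the orbit strictly into $\set{t>T}$, contradicting the assumption that $T$ is attained as a maximum.
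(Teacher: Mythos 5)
Your main-case argument is correct, and it is essentially a local, extremum-based repackaging of the paper's flow-invariance argument. The paper instead tracks the orbit once it enters $\set{t>T}\cap\set{u>0}$ (or $\set{t>T}\cap\set{u<0}$, running time backwards): condition~(iii) prevents it from crossing $\set{u=0}$ and condition~(ii) makes $t$ strictly monotone, so the orbit escapes to $t\to\infty$ and cannot close up. Your version looks at the global maximum of $t\circ\gamma$: there $\Rb_\alpha\intprod\dt=0$, so (ii) forces $u=0$, then (iii) pushes $u$ positive an instant later, and a second use of (ii) makes $t$ strictly increase, contradicting maximality. Both use exactly the two sign conditions in the same way; yours is a slightly more compact packaging, while the paper's phrasing makes the ``orbit drifts to horizontal infinity'' picture more visible.

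Your attempt at the borderline case $t_{\max}=T$, however, does not work, for two reasons. First, passing to the non-strict version of (ii) on $\set{t\ge T}$ gives only $u\cdot(\Rb_\alpha\intprod\dt)\ge 0$ at $p$, and at the maximum $(\Rb_\alpha\intprod\dt)(p)=0$ makes this vacuous; you cannot conclude $u(p)=0$. Second, even granting $u(p)=0$, the claim that $\Rb_\alpha\intprod\dv u$ ``has a non-trivial positive limit'' at $\set{u=0}\cap\set{t=T}$ is unjustified: a continuous function strictly positive on an open set can perfectly well tend to $0$ on the boundary, so the non-strict inequality $\Rb_\alpha\intprod\dv u\ge 0$ is the most you get, and that does not propagate the orbit into $\set{t>T}$.

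That said, this is a defect of the statement rather than of your argument: the paper's own proof only rules out intersection with $\set{t>T}$, hence only establishes $\gamma\subset\set{t\le T}$, and indeed later in the existence proof the paper cites this proposition with the conclusion restated as ``contained in $\set{t\le T}$''. So your main case already proves everything the paper proves and uses; the honest fix is to weaken the conclusion to $\set{t\le T}$ (or to observe that if the hypotheses hold for all $T'\ge T_0$, as in Definition~\ref{defn:admissible}, then the orbits lie in $\set{t\le T_0}\subset\set{t<T}$ for any $T>T_0$), rather than to try to close the borderline case, which the hypotheses genuinely do not control.
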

\begin{proof}
  Fix some such $T$, and suppose there were a Reeb orbit $\gamma$ which intersected $\set{t>T}$. Then $\gamma$ must intersect $\set{t>T}\cap\set{u>0}$ or $\set{t>T}\cap\set{u<0}$ by \ref{item:admissible-t}. But then $\gamma$ cannot be a closed orbit: for instance, once $\gamma$ enters $\set{t>T}\cap\set{u>0}$, then by \ref{item:admissible-t}, it cannot escape this region, and by \ref{item:admissible-s}, the function $t$ must be increasing along $\gamma$. The argument is the same in the other region, by considering $\gamma$ in reverse.
\end{proof}

\subsubsection{Existence of admissible contact forms}
We now want to show that for any convex structure $(X,t)$, there exist $(X,t)$-admissible contact forms $\alpha$. Note that besides non-degeneracy, all the conditions in the definition of admissibility only depend on the restriction of $\alpha$ to $\Op_V(\infty)$, and thus we begin by restricting our attention to this set.

We will proceed by first obtaining a form which is $t$-invariant and has a special form near the dividing set $\Gamma$, and then modifying it so as to satisfy the required conditions.

Until further notice, let $(V,\xi)$ be a contact manifold equipped with a diffeomorphism $\br{\pi,t}\colon V\to\Sigma\times\R$ such that $\xi$ is $t$-invariant (so that $\partial_t$ is a contact vector field and the slices $\set{t=t_0}$ are convex).

\begin{lem}\label{lem:t-invt-form}
  There exist forms $\alpha$ on $V$ representing $\xi$ which are $t$-invariant in the sense that $\partial_t\alpha=0$. Moreover:
  \begin{enumerate}[(i)]
  \item For any such $\alpha$, the restriction $\rstr{\alpha}{\Gamma}$ (where $\Gamma\subset\Sigma=\set{t=t_0}$ is the dividing set) is contact.
  \item\label{item:t-invt-form-beta-u} Any such $\alpha$ has the form $\beta+u\dt=\pi^*\beta+(\pi^*u)\dt$ for a uniquely determined $\beta\in\Omega^1(\Sigma)$ and $u\colon\Sigma\to\R$ (in fact, $u=\partial_t\intprod\alpha$).
  \end{enumerate}
\end{lem}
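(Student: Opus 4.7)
The plan is to prove existence first, then the decomposition (ii), and finally use (ii) to deduce (i).

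For existence, I would start with any contact form $\alpha_0$ representing $\xi$ and look for a rescaling $\alpha = f\alpha_0$ by a positive function. Because $\partial_t$ is a contact vector field, the bracket $[\partial_t, Y]$ lies in $\xi$ whenever $Y$ does, so for any section $Y$ of $\xi$, $(\Ldv_{\partial_t}\alpha_0)(Y) = \partial_t(\alpha_0(Y)) - \alpha_0([\partial_t,Y]) = 0$. Hence $\Ldv_{\partial_t}\alpha_0 = h\alpha_0$ for some $h\in\cc^\infty(V)$, and the condition $\Ldv_{\partial_t}(f\alpha_0) = 0$ becomes the linear ODE $\partial_t f = -fh$ along each line $\set{p}\times\R$, uniquely solvable with initial condition $f(p,0)=1$ by exponentiating an integral of $h$; the resulting $f$ is smooth and positive.

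For (ii), the product structure $V\cong\Sigma\times\R$ yields a canonical decomposition of any 1-form $\alpha$ as $\tilde\beta + u\,\dt$, where $u := \partial_t\intprod\alpha$ and $\tilde\beta := \alpha - u\,\dt$ annihilates $\partial_t$. Since $\Ldv_{\partial_t}\dt = 0$ and $\Ldv_{\partial_t}\tilde\beta$ also annihilates $\partial_t$, the vanishing of $\Ldv_{\partial_t}\alpha$ forces $\partial_t u = 0$ and $\Ldv_{\partial_t}\tilde\beta = 0$ separately; together with $\tilde\beta(\partial_t) = 0$ this means both descend to give the claimed $\beta\in\Omega^1(\Sigma)$ and $u\in\cc^\infty(\Sigma)$.

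For (i), the plan is to unpack the contact condition using (ii). Writing $\dv\alpha = \pi^*\dv\beta + \pi^*\dv u\wedge\dt$, expansion of $\alpha\wedge(\dv\alpha)^{n-1}$ kills the $\dt$-free term $\pi^*(\beta\wedge(\dv\beta)^{n-1})$ by a dimension count ($\Sigma$ has dimension $2n-2$), leaving
\[ \alpha\wedge(\dv\alpha)^{n-1} = \pi^*\bigl[(n-1)\,\beta\wedge(\dv\beta)^{n-2}\wedge\dv u + u\,(\dv\beta)^{n-1}\bigr]\wedge\dt, \]
so the bracketed expression is a volume form on $\Sigma$. At a point of $\Gamma = \set{u=0}$ the second summand vanishes, forcing $\dv u \ne 0$ there (so $\Gamma$ is transversely cut out) and, after contraction with a vector transverse to $\Gamma$, that $\beta\wedge(\dv\beta)^{n-2}|_\Gamma$ is a volume form on $\Gamma$. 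Since $\dt$ vanishes on the slice $\Sigma_{t_0}\supset\Gamma$, the form $\alpha|_\Gamma$ coincides with $\beta|_\Gamma$, which is therefore contact. The main obstacle is the bookkeeping in this last expansion; the existence and decomposition pieces are essentially formal once one extracts $\Ldv_{\partial_t}\alpha_0 = h\alpha_0$.
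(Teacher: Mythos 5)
Your proof is correct. The paper itself does not give an argument for this lemma — it just cites Giroux's \emph{Convexité en topologie de contact} (\S3B) and Colin--Ghiggini--Honda--Hutchings (\S2.2) — and what you have written is the standard derivation one finds there: the rescaling $\alpha = f\alpha_0$ with $\partial_t f = -fh$ (using $\Ldv_{\partial_t}\alpha_0 = h\alpha_0$, the infinitesimal form of the contact-vector-field condition) for existence; the canonical splitting of 1-forms on a product for (ii); and the expansion of $\alpha\wedge(\dv\alpha)^{n-1}$ into $\bigl[(n-1)\,\beta\wedge(\dv\beta)^{n-2}\wedge\dv u + u\,(\dv\beta)^{n-1}\bigr]\wedge\dt$, whose restriction to $\Gamma = \{u=0\}$ shows both that $\Gamma$ is cut out transversely and that $\beta|_\Gamma$ is contact. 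The only step worth a remark is the last contraction: after choosing $W$ transverse to $\Gamma$ with $\dv u(W)=1$, one has $\rstr{W\intprod\bigl(\beta\wedge(\dv\beta)^{n-2}\wedge\dv u\bigr)}{T\Gamma} = -\rstr{\beta\wedge(\dv\beta)^{n-2}}{T\Gamma}$ because the other term carries a factor of $\rstr{\dv u}{T\Gamma}=0$; spelling this out would close the one place where a reader might pause.
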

\begin{proof}
  See \cite[\S3B]{giroux-convexity-in-contact} or \cite[\S2.2]{colin-others-sutures-and-ch}.
\end{proof}

We next single out certain $t$-invariant $\alpha=\beta+u\dt$ (with $\beta$ and $u$ as in \ref{item:t-invt-form-beta-u} above) for which $\beta$ is ``cylindrical'' near $\Gamma$.

As in \cite[Lemma~2.2]{colin-others-sutures-and-ch} and \cite[3B]{giroux-convexity-in-contact}, we can consider the \emph{characteristic line field} $L_\Sigma=L_{\Sigma,\alpha}\defeq\ker(\rstr{\dv\alpha}{\xi\cap\Tn\Sigma})\subset\xi\cap\Tn\Sigma$ of $\Sigma$. The line field $L_\Sigma$ is transverse to $\Gamma$ (see \textit{loc. cit.}) and hence there exist vector fields $Y$ on a neighbourhood of $\Gamma$ which direct $L_\Sigma$.
\begin{lem}\label{lem:normal-form-near-gamma}\killenumspace
  \begin{enumerate}[(i)]
  \item\label{item:normal-form-near-gamma-beta0}
    In any tubular neighbourhood $\br{\pi,s}\colon U\toi\Gamma\times(-a,b)$ of $\Gamma$ in $\Sigma$ obtained by flowing along a vector field $Y$ directing $L_\Sigma$, we have that
    \[
      \beta=\lambda\beta_0=\lambda(\pi^*\beta_0)
    \]
    for some $\beta_0\in\Omega^1(\Gamma)$ and some positive function $\lambda$ on $U$.
  \item\label{item:normal-form-near-gamma-lambda-1} Moreover, there exists a choice of $\alpha$ and of vector field $Y$ directing $L_\Sigma$ so that $u=s$ and $\lambda=1$ on $U$, hence giving
    \[
      \rstr{\alpha}{U}=\beta_0+s\dt.
    \]
  \end{enumerate}
\end{lem}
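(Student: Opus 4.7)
My plan is to prove the two parts in turn, leveraging the characterization $L_\Sigma = \ker(\rstr{\dv\beta}{\ker\beta})$ twice --- once to eliminate the $\ds$-component of $\beta$, and once to pin down its $s$-dependence.

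For part~\ref{item:normal-form-near-gamma-beta0}, I would introduce coordinates $(x,s)$ on $U$ with $\partial_s = Y$. Since $L_\Sigma \subset \ker\beta$, we have $\partial_s \intprod \beta = 0$, so $\beta$ has no $\ds$-component. Moreover, for any $Z \in \ker\beta$ we have $\dv\beta(Y, Z) = 0$, so $Y \intprod \dv\beta$ vanishes on $\ker\beta$ and must be of the form $f\beta$ for some smooth function $f$. By Cartan's formula, $\mathcal{L}_Y \beta = Y \intprod \dv\beta = f\beta$. In the local coordinates this amounts to a decoupled linear ODE in $s$ for each component, whose solution gives $\beta = \lambda \cdot \pi^*\beta_0$ with $\beta_0 := \rstr{\beta}{\Gamma}$ and $\lambda(x,s) = \exp(\int_0^s f(x,\tau)\dv\tau) > 0$.

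For part~\ref{item:normal-form-near-gamma-lambda-1}, I would start with any $Y_0$ directing $L_\Sigma$ and any $t$-invariant $\alpha_0 = \beta^0 + u^0 \dt$ as in Lemma~\ref{lem:t-invt-form}, and apply part~\ref{item:normal-form-near-gamma-beta0} to write $\beta^0 = \lambda_0 \cdot \pi^*\beta_0$ on $U$. Next, I would rescale $\alpha_0$ by a positive function $g\colon \Sigma \to \R$ equal to $1/\lambda_0$ on a smaller neighborhood of $\Gamma$ and extended smoothly to all of $\Sigma$ (via a bump function). Since $g$ depends only on $\Sigma$, the form $\alpha_1 := g\alpha_0 = \pi^*\beta_0 + u^1\,\dt$ remains $t$-invariant, is contact (as any positive rescaling of a contact form), and has $\lambda \equiv 1$ in the given coordinates. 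Because $Y_0$ is transverse to $\Gamma$ (inherited from the transversality of $L_\Sigma$) and $\Gamma = \set{u^1 = 0}$ is cut out transversely, $\partial_s u^1$ is nonzero on $\Gamma$; after orienting $Y_0$ so that $\partial_s u^1 > 0$ near $\Gamma$, I reparametrize by $\tilde s := u^1(x,s)$. This amounts to replacing $Y_0$ by $\tilde Y := (\partial_s u^1)^{-1} Y_0$, which still directs the (unoriented) line field $L_\Sigma$; and in the new coordinates $(x, \tilde s)$ we have $u = \tilde s$, while $\pi^*\beta_0$ is $s$-independent and pulled back from $\Gamma$, so it is unaffected by the reparametrization, preserving $\lambda = 1$. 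The result is $\rstr{\alpha_1}{U} = \pi^*\beta_0 + \tilde s\, \dt$ as desired.

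The most delicate point I anticipate is verifying that the reparametrization step behaves correctly: one must check both that $\tilde s$ arises as the flow parameter of $\tilde Y$ and that $\pi^*\beta_0$ is genuinely unaffected by the coordinate change (which hinges on it being independent of $s$ to begin with, so there are no $\partial_s/\partial_{\tilde s}$ corrections). The remaining technicalities --- smoothly extending $g$ to all of $\Sigma$, shrinking $U$ to ensure $\partial_s u^1 > 0$ throughout, and choosing a local orientation of $L_\Sigma$ near $\Gamma$ --- are routine.
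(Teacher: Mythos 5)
Your proof is correct and follows essentially the same path as the paper's: for part (i), derive the ODE $\Ldv_Y\beta = f\beta$ from the fact that $Y$ directs $\ker(\rstr{\dv\beta}{\ker\beta})$, then integrate; for part (ii), first rescale $\alpha$ by $\lambda\I$ (your $g = 1/\lambda_0$) to set $\lambda\equiv 1$, then reparametrize $Y$ by $(\partial_s u)^{-1}$ to arrange $u = s$, observing that the projection $\pi$ to $\Gamma$ is unchanged since $\tilde Y$ traces the same integral curves, so $\pi^*\beta_0$ is preserved. The one point you flag as delicate --- that the rescaled $\pi^*\beta_0$ survives the change of flow parameter --- is exactly the point the paper addresses by noting $\pi = \pi'$, and your justification (both projections follow integral curves of $L_\Sigma$) matches the paper's.
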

\begin{proof}
  The statement and proof are very similar to those of \cite[Corollary~2.5]{colin-others-sutures-and-ch}; we leave the details to the reader.\qedhere
\end{proof}

Continuing with a fixed $t$-invariant form $\alpha$, assume now that $\alpha$ satisfies the conditions in the above lemma with respect to some $Y$, so $\rstr{\alpha}{U}=\beta_0+s\dt$. Let $\varepsilon>0$ be such that $(-\varepsilon,\varepsilon)$ is contained in the range of $s$.

Next, fix a function $\phi\colon(-\varepsilon,\varepsilon)\times\R\to\R_{>0}$ such that $\phi(s,t)=\abs{s}\I$ for $\abs{s}\ge\varepsilon/2$. Then the function
$\phi=\phi\circ\br{s,t}$ on $\set{\abs{s}<\varepsilon}\subset V$ agrees with $\abs{u}\I$ for $\abs{s}\ge\varepsilon/2$, and hence extends to a function $\Phi$ on $V$ agreeing with $\abs{u}\I$ outside of $\set{\abs{s}<\varepsilon/2}$.

We now set $\alpha_\phi=\Phi\alpha$, and we have:
\begin{lem}\label{lem:alpha-phi-reeb-field}
  The Reeb vector field of $\alpha_\phi$ is given by
    \[
      \Rb_{\alpha_\phi}=
      \begin{cases}
        \phi^{-2}\Big((\phi+s\phi_s)\Rb_{\beta_0}+\phi_t\partial_s-\phi_s\partial_t\Big)
        &\text{for }\abs{s}\le\varepsilon/2\\
        \pm\partial_t&\text{on }\set{\pm\partial_t\intprod\alpha>0}\setminus\set{\abs{s}<\varepsilon/2},
      \end{cases}
    \]
    where we write $\phi_s$ and $\phi_t$ for the partial derivatives of $\phi$.
\end{lem}
Here, $\Rb_{\beta_0}$ denotes the Reeb vector field of the form $\beta_0=\rstr{\alpha}{\Gamma}$, which is contact by Lemma~\ref{lem:t-invt-form}, and which we extend to $\set{\abs{s<\varepsilon}}\subset V$ by making it constant in $s$ and $t$.
\begin{proof}
  Firstly, on the complement of $\set{\abs{s}<\varepsilon/2}$, we have that $\alpha_\phi=\abs{u}\I\beta\pm\dt$, for which it is clear that the Reeb vector field is $\pm\partial_t$.

  On $\set{\abs{s}<\varepsilon}$, we have $\alpha_\phi=\phi\beta_0+\phi s\dt$, and hence
  \[
    \dv\alpha_\phi=
    \phi_s\ds\wedge\beta_0+\phi_t\dt\wedge\beta_0+\phi\dv\beta_0+
    (\phi+s\phi_s)\ds\wedge\dt
  \]
  We now make the \textit{ansatz} $\Rb_{\alpha_\phi}=a\Rb_{\beta_0}+b\partial_s+c\partial_t$ for some functions $a,b,c$. The requirement $\Rb_{\alpha_\phi}\intprod\dv\alpha_{\phi}$ then gives a (rank two) system of three linear equations in $a,b,c$, with the solution $a=H(\phi+s\phi_s)$, $b=H\phi_t$, and $c=-H\phi_s$ for some function $H$. The condition $\Rb_{\alpha_\phi}\intprod\alpha_\phi=1$ then gives $H=\phi^{-2}$.
\end{proof}

Now let $(V,\xi)$ be any contact manifold with a convex structure $(X,t)$.
\begin{propn}\label{propn:admissible-forms-exist}
  There exists an $(X,t)$-admissible form $\alpha$.
\end{propn}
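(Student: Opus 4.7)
The plan is to produce the desired form on $\Op_V(\infty)$ via the normal-form constructions of the preceding lemmas, then extend arbitrarily to $V$ and perturb on a compact set to achieve non-degeneracy. Since by Remark~\ref{rmk:alpha-admiss-infty} admissibility only constrains the restriction to $\Op_V(\infty)$, the only condition requiring control over the perturbation is \ref{item:admissible-nondegen}, and this will be handled at the end using Proposition~\ref{propn:admiss-reeb-orbits}.

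First I use the convex structure to identify $\Op_V(\infty)$ with $\Sigma\times(T_0,\infty)$ so that $X=\partial_t$, and apply Lemmas~\ref{lem:t-invt-form} and~\ref{lem:normal-form-near-gamma}\ref{item:normal-form-near-gamma-lambda-1} to produce a $t$-invariant representative $\alpha=\beta+u\dt$ of $\xi$ on this region such that on a tubular neighbourhood $U=\Gamma\times(-\varepsilon,\varepsilon)$ of $\Gamma$ we have $u=s$ and $\rstr{\alpha}{U}=\beta_0+s\dt$. Next I choose a smooth positive function $\phi(s,t)$ on $(-\varepsilon,\varepsilon)\times\R$ equal to $\abs{s}\I$ for $\abs{s}\ge\varepsilon/2$, and satisfying $\phi_t>0$ on a neighborhood of $\set{s=0}$ for $t$ sufficiently large; such a $\phi$ is easy to construct explicitly (e.g.\ by interpolating between a function like $e^{\varepsilon t}/(1+s^2)$ near $s=0$ and $\abs{s}\I$ near $\abs{s}=\varepsilon/2$, using bump functions). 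Forming $\Phi$ as in the text and setting $\alpha_\phi=\Phi\alpha$ on $\Op_V(\infty)$, I then use any globally defined contact form on $(V,\xi)$ to extend $\alpha_\phi$ across the complement of $\Op_V(\infty)$ (via a partition-of-unity argument in the ratios, which preserves the contact condition).

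To verify admissibility of $\alpha_\phi$ for $T$ large, I invoke Lemma~\ref{propn:alpha-phi-reeb-field}. Condition~\ref{item:admissible-s}: outside $\set{\abs{s}<\varepsilon/2}$ we have $\Rb_{\alpha_\phi}=\pm\partial_t$ with the sign matching $\mathrm{sign}(u_{\alpha_\phi})=\mathrm{sign}(s)$, giving $u\cdot(\Rb_{\alpha_\phi}\intprod\dt)=1>0$; and on $\set{0<\abs{s}<\varepsilon/2}$ the $\partial_t$-component of $\Rb_{\alpha_\phi}$ is $-\phi^{-2}\phi_s$, whose product with $u_{\alpha_\phi}=\phi s$ is $-s\phi_s/\phi$, positive for $t$ large since we can arrange $s\phi_s<0$ there. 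Condition~\ref{item:admissible-t}: on $\set{s=0}$, $\dv u_{\alpha_\phi}=\phi\,\ds$, and the $\partial_s$-component of $\Rb_{\alpha_\phi}$ is $\phi^{-2}\phi_t>0$, so $\Rb_{\alpha_\phi}\intprod\dv u>0$. Condition~\ref{item:admissible-converge}: $\alpha_\phi$ is constructed from $t$-invariant pieces and the choice of $\phi$, and I arrange $\phi$ to itself converge in $\cc^\infty$ as $t\to\infty$, whence the restrictions stabilize.

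Condition~\ref{item:admissible-scale} follows by rerunning the same computation with $g(t)\phi$ in place of $\phi$: the relevant sign quantities are $(g\phi)+s(g\phi)_s=g(\phi+s\phi_s)$ and $(g\phi)_t=g'\phi+g\phi_t$, both of which retain the needed signs because $g,g'\ge 0$ and $g>0$. (Strictly speaking $g\phi$ does not agree with $\abs{u}\I$ away from $\Gamma$, but the same direct calculation as in Lemma~\ref{propn:alpha-phi-reeb-field} applies globally and gives the analogous formula for the Reeb field of $g(t)\cdot\alpha_\phi$, from which the signs in~\ref{item:admissible-s}~and~\ref{item:admissible-t} can be read off.) Finally, once conditions~\ref{item:admissible-s}--\ref{item:admissible-scale} are in place, Proposition~\ref{propn:admiss-reeb-orbits} confines all Reeb orbits of $\alpha_\phi$ (and of $g\alpha_\phi$) to a compact region $\set{t<T}$, so a generic $\cc^\infty$-small perturbation of $\alpha_\phi$ supported in such a compact set achieves non-degeneracy \ref{item:admissible-nondegen} while preserving the other conditions. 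The main subtlety is the careful choice of $\phi$ so that \ref{item:admissible-s}--\ref{item:admissible-t} hold not only for $\alpha_\phi$ itself but also after the scaling in~\ref{item:admissible-scale}; this is what forces the monotonicity $\phi_t>0$ near $\set{s=0}$.
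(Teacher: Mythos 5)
Your proposal follows essentially the same route as the paper: use the normal form $\alpha=\beta_0+s\dt$ near $\Gamma$ from Lemma~\ref{lem:normal-form-near-gamma}, rescale by a function $\Phi$ built from a profile $\phi(s,t)$ with $\phi=\abs{s}\I$ for $\abs{s}\ge\varepsilon/2$ and suitable sign and monotonicity constraints, read off conditions \ref{item:admissible-s}--\ref{item:admissible-converge} from the Reeb field formula of Lemma~\ref{propn:alpha-phi-reeb-field}, and invoke Proposition~\ref{propn:admiss-reeb-orbits} to confine the Reeb orbits so that non-degeneracy \ref{item:admissible-nondegen} can be achieved by a compactly-supported generic perturbation. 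Your verification of conditions \ref{item:admissible-s} and \ref{item:admissible-t} for $g(t)\alpha_\phi$ inside the tubular neighbourhood, via $(g\phi)+s(g\phi)_s=g(\phi+s\phi_s)$ and $(g\phi)_t=g'\phi+g\phi_t$, matches the paper.

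There is, however, a genuine gap in the verification of condition~\ref{item:admissible-scale} on the region \emph{outside} $\set{\abs{s}<\varepsilon/2}$. You acknowledge that $g\Phi$ no longer agrees with $\abs{u}\I$ there, but then assert that ``the same direct calculation as in Lemma~\ref{propn:alpha-phi-reeb-field} applies globally.'' This cannot be literally true: the ansatz $\Rb=a\Rb_{\beta_0}+b\partial_s+c\partial_t$ in that lemma only makes sense in the tubular neighbourhood of $\Gamma$ where $\Rb_{\beta_0}$ and $\partial_s$ are defined, and the alternative quick observation (that the Reeb field of $\abs{u}\I\beta\pm\dt$ is $\pm\partial_t$) fails once the $g(t)$ factor is inserted. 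What is actually needed there is a \emph{different} computation: write $\alpha_\phi=\abs{u}\I\beta\pm\dt$, introduce the Liouville-type vector field $Z$ on $\Sigma$ with $Z\intprod\dv(\abs{u}\I\beta)=\abs{u}\I\beta$, and solve the ansatz $\Rb_{g\alpha_\phi}=a\partial_t+bZ$, giving $\Rb_{g\alpha_\phi}=\pm\plr{\frac{1}{g}\partial_t-\frac{g'}{g^2}Z}$. Since $Z$ is tangent to $\Sigma$, the $\partial_t$-component is $\pm\tfrac1g$, which has the right sign relative to $u$, confirming~\ref{item:admissible-s} on that region. Without this step your proof does not actually establish condition~\ref{item:admissible-s} away from the tubular neighbourhood of $\Gamma$, where it is also required.
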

\begin{proof}
  For the constant $T$ in Definition~\ref{defn:admissible}, we take any $T>0$ sufficiently large so that $X\intprod\dt=1$ on $\set{t>T}$. We first define $\alpha$ on $\set{t>T}\cong\Sigma\times(T,\infty)$.

  Namely, we take $\alpha_\phi$ as in the above lemma, where we additionally assume that $\phi$ satisfies the following conditions
  \begin{itemize}
  \item $s\phi_s<0$ when $s\ne0$.
  \item $\phi_t>0$ when $s=0$.
  \item $\phi$ is bounded and increasing in $t$, and the function $\phi_\infty(s)=\lim_{t\to\infty}\phi(s,t)$ is smooth.
  \end{itemize}
  (We can find such $\phi$ of the form $\phi(s,t)=\rho_1(s)+\rho_2(s)\rho_3(t)$.)

  Considering condition \ref{item:admissible-converge} of admissibility (Definition~\ref{defn:admissible}), we have that $\rstr{\alpha_\phi}{\set{t_0<t<t_0+L}}\to\alpha_{\phi_\infty}$ as $t_0\to\infty$.

  Next, it is clear from the description of $\Rb_{\alpha_\phi}$ that conditions \ref{item:admissible-s}~and~\ref{item:admissible-t} are also satisfied.

  Moreover, by Proposition~\ref{propn:admiss-reeb-orbits}, all of the Reeb orbits of any such extension will be contained in $\set{t\le T}$, so extending $\alpha$ to any non-degenerate form on $\set{t\le T}$ gives us a non-degenerate form on $V$ (here, we are using that non-degenerate forms are generic).

  It remains to verify condition~\ref{item:admissible-scale}. Let $g\colon(T,\infty)\to\R$ be positive, bounded, and non-decreasing.
  We want to show that $g\alpha_\phi$ still satisfies \ref{item:admissible-s}~and~\ref{item:admissible-t}.

  On $\set{\abs{s}<\varepsilon}$, this is clear, as we have $g\alpha_\phi=(g\phi)\alpha$, and $g\phi$ still satisfies the above conditions on $\phi$.

  Let us now compute $\Rb_{g\alpha_\phi}$ on the complement of $\set{\abs{s}\le\varepsilon/2}$, where we have $\alpha_\phi=\abs{u}\I\beta\pm\dt=\abs{u}\I(\beta+u\dt)$. Consider the (characteristic) vector field $Z$ on $\Sigma$ characterized by $Z\intprod\dv(\abs{u}\I\beta)=\abs{u}\I\beta$.

  Making the \emph{ansatz} $\Rb_{g\alpha_\phi}=a\partial_t+bZ$, we find that
  \[
    \Rb_{g\alpha_\phi}=\pm\plr{\frac1g\partial_t-\frac{g'}{g^2}Z}.
  \]

  Next, we consider $\Rb_{g\alpha_\phi}$ on $\set{\abs{s}<\varepsilon}$, where $\alpha_\phi=\phi(\beta_0+s\dt)$. Here, by the same computation as in Lemma~\ref{lem:alpha-phi-reeb-field}, we find that
  \[
    \Rb_{g\alpha_\phi}=
    (g\phi)^{-2}((g\phi+s(g\phi)_s)\Rb_{\beta_0}+(g\phi)_t\partial_s-(g\phi)_s\partial_t)=
    \frac{1}{g}\Rb_{\alpha_\phi}+\frac{g'}{g^2}\phi\I\partial_s,
  \]
  from which it is clear that $g\alpha_\phi$ satisfies conditions~\ref{item:admissible-s}~and~\ref{item:admissible-t}.
\end{proof}

\subsection{Admissible almost complex structures}
Let $(X,t)$ be a convex structure on $(V,\xi)$.

\begin{defn}
  Given a $(X,t)$-admissible form $\alpha$ on $V$, we say that a $\dv\alpha$-compatible almost complex structure $J$ on $\xi$ is \defword{admissible} if $\rstr{J}{\Op_V(\infty)}$ is $t$-invariant, i.e., $\Ldv_XJ=0$ on $\Op_V(\infty)$.

  We say that $(\alpha,J)$ is an \defword{$(X,t)$-admissible pair} if $\alpha$ and $J$ are, respectively, a contact form and an almost complex structure, both of which are $(X,t)$-admissible.
\end{defn}

It is clear that admissible almost complex structures exist, since there exist $\rstr{\dv\alpha}{\rstr{\Tn V}{\Sigma}}$-compatible almost complex structures on $\rstr{\xi}{\rstr{\Tn V}{\Sigma}}$.

\begin{rmk}\label{rmk:J-admiss-infty}
  As in Remark~\ref{rmk:alpha-admiss-infty}, note that the notion of $(X,t)$-admissibility only depends on the restriction of $(X,t)$ to $\Op_V(\infty)$.
\end{rmk}

\subsection{Symplectic cobordisms}
As mentioned in \S\ref{sec:recollections-on-ch}, the invariance of contact homology with respect to chosen auxiliary data, as well as more general functoriality, is obtained by considering symplectic cobordisms between contact manifolds.

Because our contact manifolds are non-compact, some care must be taken in defining cobordisms, and in this section, we introduce a suitable notion of symplectic cobordism and the various constructions and structures on them that we will need.

We note that we will actually only make use of cobordisms of the form $\R\times V$, but we formulate a slightly more general notion, as it is not substantially more complicated, and helps to clarify when we are really using that we are on $\R\times V$; also, the added generality may be useful in future work.

\subsubsection{Symplectic cobordisms}\label{subsubsec:symp-cobords}
Let $(V^\pm,\xi^\pm)$ be contact $(2n-1)$-manifolds with convex structures $(X^\pm,t^\pm)$, and suppose we are given a contactomorphism $i\colon\set{t^+>T}\toi\set{t^->T}$ for some $T\in\R$ preserving the convex structure, i.e., $i_*X^+=X^-$ and $t^-\circ i=t^+$.

\begin{defn}\label{defn:exact-symplec-cobord}
  An \defword{exact symplectic cobordism from} $(V^+,\xi^+,X^+,t^+)$ \defword{to} $(V^-,\xi^-,X^-,t^-)$ \defword{relative to} $i$ (or just \defword{symplectic cobordism from} $V^+$ \defword{to} $V^-$) is an exact symplectic $2n$-manifold $(\hat W,\dv\hat\lambda)$ equipped with strict exact symplectic embeddings $j_\pm\colon\s V^\pm\to\hat W$ such that the following conditions hold.
  \begin{enumerate}[(i)]
  \item The complement $\hat W\setminus(j_+(\s V^+)\cup j_-(\s V^-))$ of the images of $j_\pm$ is compact.
  \item The following diagram commutes.
    \begin{equation}\label{eq:symp-cob-diag}
      \begin{tikzcd}[row sep=10pt]
        \s\set{t^+>T}\ar[r, hook]\ar[dd, "S i"']&\s V^+\ar[rd, "j_+"]&\\
        &&\hat{W}\\
        \s\set{t^->T}\ar[r, hook]&\s V^-\ar[ru, "j_-"']
      \end{tikzcd}
    \end{equation}
  \item\label{item:exact-symplec-cobord-disjoint-ends} With respect to some symplectization coordinates $\s V^\pm\toi\R\times V^\pm$ (coming from $(X^\pm,t^\pm)$-admissible contact forms $\alpha^\pm$), there exists an $N\in\R$ such that the embeddings $\rstr{j_+}{[N,\infty)\times V^+}$ and $\rstr{j_-}{(-\infty,-N]\times V^-}$ are proper and have disjoint images.
  \end{enumerate}
\end{defn}

\begin{rmk}
  It follows from Remark~\ref{rmk:admiss-rel-bounded} that whether condition \ref{item:exact-symplec-cobord-disjoint-ends} holds does not depend of the particular $(X^\pm,t^\pm)$-admissible contact forms $\alpha^\pm$ used to define the symplectization coordinates.
\end{rmk}

Note that there is a canonical symplectic cobordism from $V$ to itself for any $V$, namely with $i=\id_V$ and $j_\pm=\id_{\s V}$.

In a symplectic cobordism $\hat{W}$, we have a ``horizontal'' end $j_\pm(\s\Op_{V^\pm}(\infty))$, which we will denote by $\Op_{\hat W}(\infty)$ (or $\Op(\infty)$), as well as positive and negative ``vertical'' ends, which are the images of $\rstr{j_+}{(N,\infty)\times V}$ and $\rstr{j_-}{(-\infty,N)\times V}$ with $N$ large as above (and again with symplectization coordinates $\s V^\pm\toi\R\times V^\pm$ coming from $(X^\pm,t^\pm)$-admissible contact forms), which we will denote by $\Op_{\hat W}(\pm\infty)$ (or $\Op(\pm\infty)$).

We also have a function $t$ on $\Op_{\hat W}(\infty)$ such that $t\circ j_\pm=t^\pm\circ\pi^{V^\pm}$. We may implicitly (arbitrarily) extend $t$ to a function on $\hat{W}$, bounded on $\hat{W}\setminus\Op_{\hat{W}}(\infty)$.

Note that the flow of $X^\pm$ on $\Op_{V^\pm}(\infty)$ is by contactomorphisms, and hence lifts to a flow on $\Op_{\s V^\pm}(\infty)$, and hence on $\Op_{\hat W}(\infty)$, by strict exact symplectomorphisms (given by a vector field $\hat X$ on $\Op_{\hat W}(\infty)$ with $\hat X\intprod\dt=1$). In particular, the hypersurfaces $\set{t=t_0}\subset\hat{W}$ for various $t_0$ (large enough) are all canonically isomorphic (where the isomorphisms cover the isomorphisms between the various $\Sigma_{t_0}\subset V^\pm$).

\subsubsection{Almost complex structures on cobordisms}\label{subsubsec:J-on-cobords}
In the case of a trivial cobordism $\s V$ with $(V,\xi)$ convex, we already have a notion of admissible complex structure; namely, it is one of the form $\hat J$ for an admissible almost complex structure $J$ on $\xi$.

We can generalize this to arbitrary convex cobordisms as follows:
\begin{defn}
  Let $(V^+,\xi^+)$ and $(V^-,\xi^-)$ be convex contact manifolds and let $J^\pm$ be admissible almost complex structures on $\xi^\pm$.
  An almost complex structure $\hat J$ on a symplectic cobordism $(\hat W,\hat\omega)$ from $V^+$ to $V^-$ is \defword{admissible} with respect to $J^\pm$ if it satisfies the following conditions:
  \begin{enumerate}[(i)]
  \item $\hat J$ is $\hat{\omega}$-compatible
  \item There exist neighbourhoods $\Op_{\hat{W}}(\pm\infty)$ of infinity on which $(j_\pm)_*\wh{J^{\pm}}$ and $\hat J$ coincide.
  \item For any $L>0$, the almost complex structures $\rstr{\hat{J}}{\set{t_0<t<t_0+L}}$ converge in $\cc^\infty_{\loc}$ as $t_0\to\infty$ to some almost complex structure (where we are identifying the various $\set{t_0<t<t_0+L}\subset\hat{W}$ as explained at the end of \S\ref{subsubsec:symp-cobords}).
  \end{enumerate}
\end{defn}

\begin{lem} \label{lem:acs-trivial-cobordism}
  Let $(V,\xi)$ be a convex contact manifold. For any pair of admissible almost complex structures $J^\pm$ on $\xi$, there exists an almost complex structure $\hat J$ on the symplectization $\s V$ (viewed as a cobordism from $V$ to itself) which is admissible with respect to $J^\pm$.
\end{lem}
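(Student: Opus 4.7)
The strategy is to pointwise interpolate between $\widehat{J^-}$ and $\widehat{J^+}$ using the symplectization coordinate $r$, relying on a canonical (polar-decomposition) interpolation to propagate the asymptotic behavior at horizontal infinity.

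To set up, fix an $(X,t)$-admissible contact form $\alpha$, giving a symplectization coordinate $r$ on $\s V$ with $\hat\omega=\dv(e^r\alpha)$. Since $j_\pm=\id_{\s V}$ for the trivial cobordism, the ends $\Op_{\s V}(\pm\infty)$ are of the form $\set{\pm r>N}$ for $N$ large. Both $\widehat{J^-}$ and $\widehat{J^+}$ are $\hat\omega$-compatible almost complex structures on all of $\s V$; we want to deform one to the other across the region $\set{\abs{r}\le N}$ while keeping everything $\hat\omega$-compatible and convergent as $t\to\infty$.

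Next, recall the standard fact that, given a symplectic form $\omega$ and any Riemannian metric $g$, polar decomposition produces a unique $\omega$-compatible almost complex structure $J(\omega,g)$ whose associated metric is the ``polar part'' of $g$. Applying this pointwise: with $g_i\defeq\hat\omega(-,\widehat{J^i}-)$ for $i=-,+$, set $g_s\defeq(1-s)g_-+sg_+$ and $\hat J_s\defeq J(\hat\omega,g_s)$. This yields a smooth family of $\hat\omega$-compatible almost complex structures on $\s V$ with $\hat J_0=\widehat{J^-}$ and $\hat J_1=\widehat{J^+}$, and crucially the construction is \emph{fiberwise}: $(\hat J_s)_p$ depends only on $\hat\omega_p,(g_-)_p,(g_+)_p,s$. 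Now pick a smooth $\chi\colon\R\to[0,1]$ with $\chi(r)=0$ for $r\le-N$ and $\chi(r)=1$ for $r\ge N$, and define
\[
  \hat J_p\defeq(\hat J_{\chi(r(p))})_p.
\]

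It remains to check the three conditions of admissibility. Condition~(i) is built into the polar-decomposition construction; condition~(ii) follows because $\chi\equiv1$ on $\set{r\ge N}$ forces $\hat J=\widehat{J^+}$ there, and likewise for the negative end. For condition~(iii), note that admissibility of $J^\pm$ makes each $J^\pm$ $t$-invariant on $\Op_V(\infty)$, while condition~\ref{item:admissible-converge} of admissibility makes $\alpha$ (and hence $\hat\omega=\dv(e^r\alpha)$ and $\Rb_\alpha$, used in the definition of $\widehat{J^\pm}$) converge in $\cc^\infty$ on slices $\set{t_0<t<t_0+L}$ as $t_0\to\infty$. Therefore both $\widehat{J^\pm}$ and $\hat\omega$, and with them $g_\pm$, converge on such slices; since $\chi(r)$ is independent of $t$ and the assignment $(\hat\omega,g_-,g_+,s)\mapsto\hat J_s$ is continuous (even smooth) in all arguments at each point, $\hat J$ inherits the same $\cc^\infty_{\loc}$ convergence.

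The only real subtlety is ensuring that the interpolation is compatible with the asymptotic structure on $\Op(\infty)$, and this is handled by choosing the interpolation to be pointwise/canonical (polar decomposition) rather than any construction involving global choices (e.g., partitions of unity in $t$), so that convergence of the inputs automatically yields convergence of the output.
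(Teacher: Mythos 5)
Your proof is correct but takes a genuinely different route from the paper's. The paper compactifies $V$ to a compact contact manifold $\overline V$ with convex boundary, extends $J^\pm$ to compatible almost complex structures $\overline J^\pm$ on $\overline\xi$ (using the $t$-invariance of $J^\pm$ near infinity), invokes the standard extension lemma for compatible almost complex structures to produce $\hat J$ on $\s\overline V$ agreeing with $\widehat{\overline{J}^\pm}$ on the ends, and restricts back to $\s V$. The convergence condition (iii) at horizontal infinity then comes for free, because $\hat J$ extends smoothly across $\partial\overline V$. Your construction stays on $\s V$ and interpolates between $\widehat{J^-}$ and $\widehat{J^+}$ via pointwise polar decomposition and a cutoff $\chi(r)$; both work, the paper's being shorter and computation-free, yours avoiding the passage to a compactification.

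One imprecision worth flagging in your verification of condition (iii): you justify the convergence by saying ``$\chi(r)$ is independent of $t$.'' But the identification of the slices $\set{t_0<t<t_0+L}$ appearing in condition (iii) is via the flow of $\hat X$, the strict exact lift of $X$ to the symplectization; this flow preserves $\hat\lambda=e^r\alpha$ but \emph{shifts} $r$ by $-F_s$, where $F_s$ is determined by $(\psi^s)^*\alpha=e^{F_s}\alpha$ for the time-$s$ flow $\psi^s$ of $X$. So under the slice identifications, $\chi(r)$ pulls back to $\chi(r-F_s)$, which is not $s$-independent. Your conclusion is still correct: condition~(iv) of Definition~\ref{defn:admissible} forces $F_s$ (restricted to a slice) to converge in $\cc^\infty$ as $s\to\infty$, hence $\chi(r-F_s)$ converges too. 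But the correct justification is this convergence of $F_s$, not invariance of $r$ under the slice identifications.
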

\begin{proof}
	Any convex contact manifold $(V,\xi)$ is the interior of a compact contact manifold $(\overline{V},\overline{\xi})$ with convex boundary. Moreover, any admissible almost complex structure $J$ on $\xi$ can be canonically extended to a compatible almost complex structure $\bar J$ on $\overline\xi$. It is a standard fact that any compatible almost complex structure defined on a closed subset of a symplectic manifold can be extended to the whole manifold. Hence, there exists a compatible almost complex structure $\hat J$ on $\s \overline{V}$ which agrees with $\bar{J}^\pm$ on the ends. The restriction of $\hat J$ to $\s V$ is the desired admissible almost complex structure.
\end{proof}

\subsubsection{Families of cobordisms}
\begin{defn}
  Given $(V^\pm,\xi^\pm,X^\pm,t^\pm)$ and $i$ as in Definition~\ref{defn:exact-symplec-cobord}, a \defword{family of symplectic cobordisms} from $V^+$ to $V^-$ relative to $i$ is a manifold $\hat{W}$ with a family of symplectic forms $(\hat\omega^\tau)_{\tau\in P}$ (with $P$ some parameter space, which will for us always be an interval), constant outside of a compact subset of $\hat{W}$, together with maps $j_\pm$ as in \S\ref{subsubsec:symp-cobords}, such that each $(\hat{W},\omega^\tau,j_\pm)$ is a symplectic cobordism from $V^+$ to $V^-$ relative to $i$.
\end{defn}

\begin{defn}
  Let $(\hat{W}, (\hat\omega^\tau)_{\tau\in P})$ be a family of symplectic cobordisms as in the above definition and let $J^\pm$ be admissible complex structures on $\xi^\pm$. A family $(\hat J^\tau)_{\tau \in P}$ of almost complex structures on $\hat{W}$ is \defword{admissible} with respect to $J^\pm$ if for each $\tau \in P$, $\hat J^\tau$ is admissible with respect to $J^\pm$ as an almost complex structure on $(\hat{W}, \hat \omega^\tau)$.
\end{defn}

\begin{lem} \label{lem:acs-family-cobordisms}
	Let $(\hat{W}, (\hat\omega^\tau)_{\tau\in [0,1]})$ be a family of symplectic cobordisms, let $J^\pm$ be admissible complex structures on $\xi^\pm$, and let $\hat J^0$ (resp. $\hat J^1$) be an admissible almost complex structure on $(\hat{W}, \hat\omega^0)$ (resp. $(\hat{W},\hat\omega^1)$). Then there exists a family of almost complex structures $(\hat J^\tau)_{\tau \in [0,1]}$ on $\hat W$ extending $(\hat{J}^0,\hat{J}^1)$ which is admissible with respect to $J^\pm$.
\end{lem}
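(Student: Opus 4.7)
The plan is to adapt the compactification argument of Lemma~\ref{lem:acs-trivial-cobordism} to the parametrized setting. Since $(V^\pm,\xi^\pm)$ are convex and the cobordism data $(\hat\omega^\tau,j_\pm)$ are constant outside a compact set (with $\hat\omega^\tau$ eventually independent of $\tau$), I would first compactify $\hat W$ to a compact symplectic manifold with corners $\overline{\hat W}$, whose boundary consists of vertical faces realizing the ideal contact boundaries of $\overline{V^\pm}$ and a horizontal face at $t=\infty$. The family $(\hat\omega^\tau)$ extends to $\overline{\hat W}$, the admissible $J^\pm$ extend canonically to compatible almost complex structures $\bar J^\pm$ on $\overline{\xi^\pm}$ as in Lemma~\ref{lem:acs-trivial-cobordism}, and the given $\hat J^0,\hat J^1$ similarly extend to compatible almost complex structures on $(\overline{\hat W},\overline{\hat\omega^0})$ and $(\overline{\hat W},\overline{\hat\omega^1})$, using their admissibility (conditions (ii) and (iii)) to ensure smoothness at the boundary.

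Having reduced to a compact problem, the prescribed data consists of a compatible family of almost complex structures defined over the closed subset
\[
A\defeq\plr{\set{0,1}\times\overline{\hat W}}\cup\plr{[0,1]\times N}\subset[0,1]\times\overline{\hat W},
\]
where $N$ is a closed neighbourhood of the vertical boundary on which condition~(ii) of admissibility forces $\hat J^\tau=(j_\pm)_*\wh{J^\pm}$ independently of $\tau$. This data is well-defined and smooth on $A$ because $\hat J^0$ and $\hat J^1$ are themselves admissible, hence already agree with $\wh{J^\pm}$ near the vertical ends. I would then extend to all of $[0,1]\times\overline{\hat W}$ using the parametric version of the standard fact employed in Lemma~\ref{lem:acs-trivial-cobordism}: the fibre bundle over $[0,1]\times\overline{\hat W}$ whose fibre at $(\tau,p)$ is the space of $\overline{\hat\omega^\tau}_p$-compatible complex structures on $T_p\overline{\hat W}$ has contractible (indeed convex, via polar decomposition) fibres, so any section over a closed subset extends. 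Restricting the resulting family to $\hat W\subset\overline{\hat W}$ then yields a family of compatible almost complex structures interpolating $\hat J^0$ and $\hat J^1$: condition~(ii) holds by construction, and condition~(iii) is automatic since smoothness on the compactification at the horizontal face implies the required $\cc^\infty_{\loc}$ convergence as $t_0\to\infty$.

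The main technical delicacy is that $\overline{\hat W}$ is a manifold with corners where the horizontal and vertical faces meet, so one must check that the prescribed data on $A$ extends smoothly across the ``corner at infinity'' and that the fibre-bundle extension respects the corner structure. This compatibility is automatic from the admissibility assumptions: the $t$-invariance of $J^\pm$ (together with the convergence condition~(iii) for $\hat J^0,\hat J^1$) ensures that, near the corner, all the prescribed structures are simultaneously in the normal form $\wh{J^\pm}$ in appropriate coordinates, so there is no obstruction to a smooth extension. The remaining content of the lemma is then the well-known relative extension property for compatible almost complex structures, applied in the family setting.
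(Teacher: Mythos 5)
Your overall strategy --- compactify so that the prescribed data lives on a closed subset, then extend using the contractibility of the space of compatible complex structures --- is the same as the paper's, and your treatment of the parametrized extension (the fibre bundle over $[0,1]\times\overline{\hat W}$) usefully makes explicit something the paper only gestures at. However, there is a genuine gap in your compactification step. You propose to compactify $\hat W$ \emph{both} horizontally (at $t=\infty$) \emph{and} vertically (adding ``ideal contact boundaries'' at $r_\pm = \pm\infty$), and you assert that the result $\overline{\hat W}$ is a compact symplectic manifold with corners on which the family $\hat\omega^\tau$ extends. This is false: on the vertical ends $\hat\lambda = e^{r_\pm}\alpha^\pm$ blows up as $r_\pm\to+\infty$ and degenerates as $r_\pm\to-\infty$, so $\hat\omega^\tau$ does not extend smoothly and non-degenerately to any compactification that adds a copy of $V^\pm$ at $r_\pm=\pm\infty$. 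Consequently the bundle whose fibre at $(\tau,p)$ is the space of $\overline{\hat\omega^\tau}_p$-compatible complex structures is not defined over the vertical boundary faces, and your extension argument cannot be run over $[0,1]\times\overline{\hat W}$ as stated.

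The fix, and what the paper actually does, is to compactify \emph{only} horizontally: replace $V^\pm$ by compact contact manifolds with convex boundary $\overline{V}^\pm$ and $\hat W$ by the corresponding cobordism $\overline{\hat W}'$ from $\overline{V}^+$ to $\overline{V}^-$, which is still non-compact in the symplectization direction but is an honest symplectic manifold with boundary. The closed-subset extension theorem for compatible almost complex structures does not require compactness of the ambient manifold; it only requires the prescribed data to live on a closed subset. Here the closed subset is $A = (\{0,1\}\times\overline{\hat W}') \cup ([0,1]\times\Op_{\overline{\hat W}'}(\pm\infty))$ --- the $\tau$-endpoints together with the vertical ends, which is exactly where condition~(ii) of admissibility forces $\hat J^\tau = (j_\pm)_*\wh{J^\pm}$, so there is nothing to extend there. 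With this adjustment, your fibre-bundle/contractibility argument goes through unchanged, and condition~(iii) follows from smoothness on the horizontal compactification as you observe.
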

\begin{proof}
	The proof is essentially the same as that of Lemma~\ref{lem:acs-trivial-cobordism}. Indeed, the family $(\hat{W}, (\hat\omega^\tau)_{\tau\in [0,1]})$ can be ``horizontally compactified'' to a family of cobordisms from $\overline{V}^+$ to $\overline{V}^-$, which reduces the problem to extending a compatible almost complex structure defined on a closed subset of a symplectic manifold.
\end{proof}

\subsubsection{Gluings of cobordisms}
Now suppose we are given three contact manifolds with convex structures $(V^k,\xi^k,X^k,t^k)$ for $k=0,1,2$, convex-structure-preserving contactomorphisms $i^{\ell(\ell+1)}\colon\Op_{V^\ell}(\infty)\to\Op_{V^{\ell+1}}(\infty)$ ($\ell=0,1$), as well as symplectic cobordisms $(\hat W^{\ell(\ell+1)},\dv\lambda^{\ell(\ell+1)})$ from $V^\ell$ to $V^{\ell+1}$ relative to $i^{\ell(\ell+1)}$, and denote the associated maps by $j_\pm^{\ell(\ell+1)}$.

We then obtain, for each $\tau\in[0,\infty)$ a symplectic cobordism $\hat W^{02,\tau}$ from $V^0$ to $V^2$ relative to $i^{12}\circ i^{01}$ by forming the pushout
\begin{equation}\label{eq:tau-gluing-pushout}
  \begin{tikzcd}
    \s V^1\ar[r, "\mu_{-\tau/2}"]\ar[d, "\mu_{\tau/2}"']&
    \s V^1\ar[r, "j^{01}_-"]&
    \hat W^{01}\ar[dd, "\inc^{01}"]\\
    \s V^1\ar[d, "j^{12}_+"']\\
    \hat W^{12}\ar[rr, "\inc^{12}"]&&\hat W^{02,\tau}
    \ar[lluu, "\ulcorner", phantom, pos=0]
  \end{tikzcd}
\end{equation}
where $\mu_\tau$ denotes translation by $\tau$ with respect to the $\R$ action on $\s V^1$.

We equip $\hat{W}^{02,\tau}$ with the exact symplectic structure $\hat\omega^{02}=\dv\hat\lambda^{02}$ characterized as follows. Since $\mu_\tau^*\hat\lambda=e^\tau\hat\lambda$ and since $j_\pm^{\ell(\ell+1)}$ are strict exact, there is a unique form $\hat\lambda^{02}$ on $\hat W^{02,\tau}$ with $(\inc^{01})^*\hat\lambda^{02}=e^{\tau/2}\hat\lambda^{01}$ and $(\inc^{12})^*\hat\lambda^{02}=e^{-\tau/2}\hat\lambda^{12}$.

For the associated exact symplectic embeddings $j^{02,\tau}_+\colon\s V^0\to \hat W^{02,\tau}$ and $j^{02,\tau}_-\colon\s V^2\to \hat W^{02,\tau}$, we have no choice but to take
\[
  \s V^0\tox{\mu_{-\tau/2}}\s V^0\tox{j_+^{01}}\hat W^{01}
  \tox{\inc^{01}}\hat W^{02,\tau}
  \quad\text{and}\quad
  \s V^2\tox{\mu_{\tau/2}}\s V^2\tox{j_+^{12}}\hat W^{12}
  \tox{\inc^{12}}\hat W^{02,\tau}
\]
respectively.

It is then easy to see that the diagram \eqref{eq:symp-cob-diag} commutes and that the remaining conditions in the definition of symplectic cobordism obtain.

\begin{defn}
  For each $\tau\in[0,\infty)$, we call the exact symplectic cobordism $\hat W^{02,\tau}$ described above the \defword{$\tau$-gluing} of $\hat{W}^{01}$ and $\hat{W}^{12}$.
\end{defn}

Note that if $\hat{W}^{01}$ and $\hat{W}^{12}$ are both the trivial cobordism $\s V$, then the $\tau$-gluing is again canonically isomorphic to the trivial cobordism $\s V$, with $\inc^{01}$ and $\inc^{12}$ corresponding to $\mu_{\tau/2}$ and $\mu_{-\tau/2}$, respectively.

\subsubsection{Almost complex structures on $\tau$-gluings}
With $\hat{W}^{01}$ and $\hat{W}^{12}$ as above, suppose that we have admissible complex structures $J^k$ on $\xi^k$ ($k=0,1,2$), and admissible almost complex structures $\hat{J}^{01}$ and $\hat{J}^{12}$ on $\hat{W}^{01}$ and $\hat{W}^{12}$ with respect to these.

Consider the diagram \eqref{eq:tau-gluing-pushout}. Note that, by definition of admissibility, the almost complex structures on $\hat{W}^{01}$ and $\hat{W}^{12}$ agree on $\s V^1$ for $\tau$ sufficiently large, and hence induce an (admissible!) almost complex structure $\hat{J}^{02,\tau}$ on $\hat{W}^{02,\tau}$. Hence, in this situation, we will often tacitly assume $\hat{W}^{02,\tau}$ to be equipped with this almost complex structure.

For small $\tau$, we will only obtain an almost complex structure $\hat{J}^{02,\tau}$ on the positive and negative ends of $\hat{W}^{02,\tau}$, and we arbitrarily extend this to an admissible almost complex structure on all of $\hat{W}^{02,\tau}$.

\subsection{Compactness}\label{subsec:compactness}
We now come to the compactness theorems for moduli spaces of holomorphic curves in symplectic cobordisms between convex contact manifolds that we will need in order to define contact homology. As our definition of the latter will precisely follow the construction in \cite{pardon-contact-homology-and-vfc}, we need only ensure that the compactness theorems used there (taken from \cite{compactness-in-sft}), which are defined for compact $V$, go through in our context.

However, those theorems go through as long as all of the holomorphic curves in the moduli space under consideration are confined to a compact subset of $V$ (see \cite[Theorem~11.1]{compactness-in-sft}). More precisely, when considering holomorphic curves in $\s V$, they should be contained in $(\pi^V)\I(K)$ for some compact $K\subset V$, and when considering curves in a general symplectic cobordism, they should be confined to the complement of some neighbourhood $\Op(\infty)$ of horizontal infinity.

Hence, in this section, we provide various results constraining holomorphic curves in symplectic cobordisms to certain subsets. To do this, we will take advantage of the (asymptotic) $t$-invariance of the almost complex structure we are considering, and appeal to the ``target-local'' version of Gromov compactness from \cite{fish-target-local}.

\subsubsection{Holomorphic curves in a $\tau$-glued cobordism}
We begin with the most complicated of the situations we will be considering, as the proof in this case includes all of the ideas needed in the remaining proofs.

To begin with, we will want to establish the existence of symplectic forms in cobordisms with certain desirable properties (\textit{cf}. \cite[\S2.10]{pardon-contact-homology-and-vfc}).
\begin{lem}\label{lem:interpolating-form}
  Let $(\hat{W},\dv\hat\lambda)$ be a symplectic cobordism from $(V^+,\xi^+,X^+,t^+)$ to $(V^-,\xi^-,X^-,t^-)$ and fix admissible pairs forms $(\alpha^\pm,J^\pm)$ on $V^\pm$. Then for any admissible almost complex structure $\hat J$ on $\hat W$, there exists an exact symplectic form $\tilde{\omega}=\dv\tilde\lambda$ on $\hat{W}$ with the following properties:
  \begin{enumerate}[(i)]
  \item\label{item:interpolating-compatible} $\hat J$ is $\tilde{\omega}$-compatible.
  \item\label{item:interpolating-vert} Consider the trivialization $(r_\pm,\pi)\colon\s V^\pm\toi\R\times V^\pm$ induced by $\alpha^\pm$, and let $r_\pm\defeq r_\pm\circ j_\pm\I\colon\Op_{\hat W}(\pm\infty)\to\R$:

    We then have $\tilde\lambda=e^{R_\pm(r_\pm)}\alpha^\pm$ on $\Op_{\hat{W}}(\pm\infty)$ for some bounded, increasing functions $R_\pm\colon\R\to\R$.
  \item\label{item:interpolating-horiz} For each $L>0$, the restrictions $\rstr{\tilde\lambda}{\set{t_0<t<t_0+L}}$ converge in $\cc^{\infty}$ as $t_0\to\infty$ to some primitive of a symplectic form (where we are again using the identifications between the various $\set{t=t_0}$ as explained at the end of \S\ref{subsubsec:symp-cobords}).
    (Note that on $\Op_{\hat W}(\pm\infty)$, this follows from \ref{item:interpolating-vert} and condition \ref{item:admissible-converge} of admissibility, Definition~\ref{defn:admissible}).
  \end{enumerate}
\end{lem}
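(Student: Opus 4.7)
The plan is to obtain $\tilde\lambda$ by multiplying the Liouville form $\hat\lambda$ by a positive scalar function $f\colon\hat W\to\R$ which equals $e^{R_\pm(r_\pm)-r_\pm}$ on the far vertical $\pm$-end and $1$ away from both ends. Using a scalar multiplier (rather than, say, adding a one-form) keeps the $\hat J$-compatibility check manageable, and choosing $R_\pm$ equal to the identity throughout the transition region of the cutoff eliminates a cross-term that would otherwise obstruct compatibility.

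Fix $N$ (via condition~\ref{item:exact-symplec-cobord-disjoint-ends} of Definition~\ref{defn:exact-symplec-cobord}) so that $\set{r_+\ge N}\cap\set{r_-\le -N}=\emptyset$ in $\hat W$, and choose smooth, strictly increasing, bounded $R_\pm\colon\R\to\R$ with $R_+(r)=r$ for $r\le N+2$ and $R_+(r)$ tending to some finite limit as $r\to+\infty$ (and symmetrically for $R_-$). Choose cutoffs $\chi_\pm$ on $\hat W$, each a function of $r_\pm$ alone on $j_\pm(\s V^\pm)$ and extended smoothly by $0$ outside, with $\chi_+\equiv 1$ on $\set{r_+>N+2}\defeq\Op_{\hat W}(+\infty)$ and $\chi_+\equiv 0$ on $\set{r_+\le N}$; the two supports are disjoint by the choice of $N$. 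Set
\[
\tilde\lambda\defeq f\hat\lambda,\quad f\defeq 1+\chi_+(e^{R_+(r_+)-r_+}-1)+\chi_-(e^{R_-(r_-)-r_-}-1).
\]
On $\Op_{\hat W}(\pm\infty)$, this gives $\tilde\lambda=e^{R_\pm(r_\pm)}\alpha^\pm$, using $\hat\lambda=e^{r_\pm}\alpha^\pm$ on $j_\pm(\s V^\pm)$, proving \ref{item:interpolating-vert}.

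The main step is \ref{item:interpolating-compatible}. Off $\mathrm{supp}\,\chi_+\cup\mathrm{supp}\,\chi_-$ nothing is to check. On $\mathrm{supp}\,\chi_+$ (symmetrically on $\mathrm{supp}\,\chi_-$), I would decompose $v=a\partial_{r_+}+b\Rb_{\alpha^+}+w$ with $w\in\xi^+$, use $\hat J\partial_{r_+}=\Rb_{\alpha^+}$, and expand $\dv\tilde\lambda=\dv f\wedge\hat\lambda+f\hat\omega$ (where $\dv f$ is a multiple of $\dv r_+$ since $f$ depends only on $r_+$ there); a direct calculation yields
\[
\dv\tilde\lambda(v,\hat Jv)=e^{r_+}\bigl[(f+A)(a^2+b^2)+f\,\dv\alpha^+(w,Jw)\bigr],
\]
with $A\defeq\chi_+'(e^{R_+-r_+}-1)+\chi_+(R_+'-1)e^{R_+-r_+}$. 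Plainly $f>0$, and regrouping gives
\[
f+A=(1-\chi_+)+\chi_+R_+'(r_+)e^{R_+(r_+)-r_+}+\chi_+'(r_+)(e^{R_+(r_+)-r_+}-1),
\]
where the first two non-negative summands cannot simultaneously vanish and the third is non-negative because $\mathrm{supp}\,\chi_+'\subset[N,N+2]$, on which $R_+(r_+)=r_+$ by construction. Thus $f+A>0$, and compatibility follows.

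Property \ref{item:interpolating-horiz} follows since $f$ depends only on $r_\pm$, and $r_\pm$ is asymptotically invariant under the $\hat X$-flow identifying different $t$-slices (as the contact Hamiltonian of $X^\pm$ tends to $0$ at horizontal infinity by~\ref{item:admissible-converge}), so that $f$ becomes asymptotically constant under these identifications, while $\hat\lambda=e^{r_\pm}\alpha^\pm$ itself converges by~\ref{item:admissible-converge}. The main obstacle throughout is preserving global $\hat J$-compatibility: naively letting $R_\pm$ transition within $\mathrm{supp}\,\chi_\pm'$ would introduce a negative cross-term in $f+A$, which is avoided here by staggering the transition of $R_\pm$ strictly beyond where $\chi_\pm$ varies.
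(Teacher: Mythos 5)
Your approach is essentially the paper's: modify $\hat\lambda$ on the vertical ends by replacing $r_\pm$ with a bounded increasing function $R_\pm$ that agrees with the identity near the transition, so that the modified form glues smoothly to $\hat\lambda$ in the middle (your cutoffs $\chi_\pm$ are actually redundant, since wherever $\chi_\pm$ is strictly between $0$ and $1$ you have arranged $R_\pm(r_\pm)=r_\pm$, so the cutoff contributes nothing). Your explicit compatibility computation is a nice amplification of what the paper dispatches in one line (``the same proof that shows $\wh{J^\pm}$ is $\dv(e^{r_\pm}\alpha^\pm)$-compatible also shows it is $\dv(e^{R_\pm(r_\pm)}\alpha^\pm)$-compatible'').

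However, there is a gap in your choice of $N$. You fix $N$ using only the disjointness condition of Definition~\ref{defn:exact-symplec-cobord}, but your compatibility calculation on $\mathrm{supp}\,\chi_+$ decomposes $v=a\partial_{r_+}+b\Rb_{\alpha^+}+w$ and uses $\hat J\partial_{r_+}=\Rb_{\alpha^+}$, which requires $\hat J=(j_+)_*\wh{J^+}$ there. The admissibility of $\hat J$ only guarantees this on \emph{some} unspecified neighborhood of vertical infinity, so you must additionally choose $N$ large enough that $j_\pm^*\hat J=\wh{J^\pm}$ on $\set{\pm r_\pm\ge N}$ (the paper makes this choice explicit). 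Without it, the formula you derive for $\dv\tilde\lambda(v,\hat Jv)$ is unjustified precisely on the region $\{r_+>N+2\}$ where $f\ne 1$, which is the only place it matters.

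Your justification of property~\ref{item:interpolating-horiz} is also incorrect. The contact Hamiltonian $u^\pm=X^\pm\intprod\alpha^\pm$ does \emph{not} tend to $0$ at horizontal infinity: by condition~\ref{item:admissible-converge} it converges, with $X^\pm$ identified with $\partial_t$, to the $\dt$-component of the limiting form $\alpha^\pm_\infty$, which is nonzero off the dividing set. Correspondingly $r_\pm$ is not asymptotically invariant under the identifying $\hat X$-flow; rather, writing $\phi_s^*\alpha^\pm=e^{F_s}\alpha^\pm$, one has $r_\pm\circ\hat\phi_s=r_\pm-F_s$, and the point is that $F_s$ \emph{converges} as $s\to\infty$ (not to $0$), because $e^{F_s}$ is the ratio of converging restrictions of $\alpha^\pm$. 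The identified restrictions of $f$ therefore converge, but to $f(r_\pm-F_\infty)$ rather than becoming constant, and this is what is needed.
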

\begin{proof}
  Since $\hat\lambda$ agrees with $e^{r_\pm}\alpha^\pm$ on $\Op_{\hat{W}}(\pm\infty)$, we can simply replace it on this region by $\tilde\lambda=e^{R_\pm(r_\pm)}\alpha^\pm$, where $R_+(r)$ is any bounded above increasing function which is equal, for sufficiently large $N>0$, to $r$ on $(-\infty,N]$, and $R_-(r)$ is any bounded below increasing function agreeing with $r$ on $[N,\infty)$. Property~\ref{item:interpolating-vert} then follows by construction. Let us moreover choose $N$ large enough so that $j_\pm^*\hat J=\wh{J^\pm}$ on $\set{\pm r_\pm\ge N}$ (such an $N$ exists by admissibility of $\hat J$).

  Property~\ref{item:interpolating-horiz} follows from the corresponding properties of $\alpha^\pm$ and $\hat\lambda$.

  Property~\ref{item:interpolating-compatible} is clear where $\tilde{\lambda}=\hat\lambda$. Elsewhere, where $\tilde\lambda=e^{R_\pm(r_\pm)}\alpha^\pm$, we have by our choice of $N$ that $j_\pm^*\hat J=\wh{J^\pm}$, and the same proof that shows that $\wh{J^\pm}$ is $\dv(e^{r\pm}\alpha^\pm)$-compatible also shows that it is $\dv(e^{R_\pm(r_\pm)})$-compatible.
\end{proof}

\begin{lem}\label{lem:interpolating-form-glued}
  Let $(\hat{W}^{\ell(\ell+1)},\dv\hat\lambda^{\ell(\ell+1)})$ (for $\ell=0,1$) be symplectic cobordisms from $(V^\ell,\xi^\ell,X^\ell,t^\ell)$ to $(V^{\ell+1},\xi^{\ell+1},X^{\ell+1},t^{\ell+1})$ and fix admissible pairs $(\alpha^k,J^k)$ (for $k=0,1,2$) on $V^k$.

  Moreover, fix admissible almost complex structures $\hat{J}^{\ell(\ell+1)}$ on $\hat{W}^{\ell(\ell+1)}$ and consider forms $\tilde\lambda^{\ell(\ell+1)}$ on $\hat{W}^{\ell(\ell+1)}$ as in Lemma~\ref{lem:interpolating-form}.

  Also, fix a smooth bounded increasing function $\rho(r)$, and assume that
  \[
    \lim_{r\to-\infty}R^{01}_-(r)>
    \lim_{r\to\infty}\rho(r)
    \quad\text{and}\quad
    \lim_{r\to-\infty}\rho(r)>
    \lim_{r\to\infty}R^{12}_+(r)
  \]
  where $R^{01}_-$ and $R^{12}_+$ are functions as in Lemma~\ref{lem:interpolating-form}~\ref{item:interpolating-vert}.

  Then there exists a family $\set{\tilde\omega^\tau=\dv\tilde\lambda^\tau}_{\tau\in(T,\infty)}$ (for $T>0$ large enough) of exact symplectic forms on the $\tau$-glued cobordisms $\hat{W}^{02,\tau}$ with the following properties:
  \begin{enumerate}[(i)]
  \item\label{item:interpolating-glued-agree-cob} The forms $(\inc^{\ell(\ell+1)})^*\tilde\lambda^\tau$ on $\hat{W}^{\ell(\ell+1)}$ converge in $\cc^\infty_\loc$ to $\tilde\lambda^{\ell(\ell+1)}$ as $\tau\to\infty$.
  \item\label{item:interpolating-glued-agree-mid} The forms $j_1^*\tilde\lambda^\tau$ on $\s V^1$ converge in $\cc^\infty_\loc$ to $e^{\rho(r)}\alpha$ as $\tau\to\infty$, where $j_1$ is the map
  \[
    \inc^{01}\circ j^{01}_-\circ\mu_{-\tau/2}=
    \inc^{12}\circ j^{12}_+\circ\mu_{\tau/2}
    \colon\s V^1\hto \hat W^{02,\tau}
  \]
  and $r$ is the symplectization coordinate on $\s V^1$ induced by $\alpha^1$.
  \item\label{item:interpolating-glued-compatible} The induced almost complex structure $\hat J^{02,\tau}$ on $\hat W^{02,\tau}$ is $\tilde{\omega}^\tau$-compatible.
  \item\label{item:interpolating-glued-horiz} As in Lemma~\ref{lem:interpolating-form}~\ref{item:interpolating-horiz}, the restrictions $\rstr{\tilde\lambda^\tau}{\set{t_0<t<t_0+L}}$ converge for any $L>0$ to a primitive of a symplectic form as $t_0\to\infty$.
  \end{enumerate}
\end{lem}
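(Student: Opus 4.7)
The plan is to build $\tilde\lambda^\tau$ piecewise on three overlapping regions of $\hat W^{02,\tau}$: the image of $\hat W^{01}$ under $\inc^{01}$, the image of $\hat W^{12}$ under $\inc^{12}$, and the neck $j_1(\s V^1)$ in which these overlap. On the first two I would set $(\inc^{\ell(\ell+1)})^*\tilde\lambda^\tau = \tilde\lambda^{\ell(\ell+1)}$, and on the neck I would use a form of the shape $e^{F^\tau(r)}\alpha^1$ for some smooth, strictly increasing function $F^\tau\colon\R\to\R$ of the symplectization coordinate $r$ on $\s V^1$ induced by $\alpha^1$. The construction then reduces to producing such an $F^\tau$ so that the three definitions agree on their overlaps and the four claimed properties follow.

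To see what $F^\tau$ must be, I would first compute the pullback of $\tilde\lambda^{01}$ to $\s V^1$ via $j_1 = \inc^{01}\circ j^{01}_-\circ\mu_{-\tau/2}$. By Lemma~\ref{lem:interpolating-form}\ref{item:interpolating-vert}, near the negative end of $\hat W^{01}$ one has $\tilde\lambda^{01} = e^{R^{01}_-(r^{01}_-)}\alpha^1$, which pulls back to $e^{R^{01}_-(r-\tau/2)}\alpha^1$; symmetrically $\tilde\lambda^{12}$ pulls back to $e^{R^{12}_+(r+\tau/2)}\alpha^1$. Consistency therefore requires $F^\tau(r) = R^{01}_-(r-\tau/2)$ for $r$ sufficiently large and $F^\tau(r) = R^{12}_+(r+\tau/2)$ for $r$ sufficiently negative; to achieve property~\ref{item:interpolating-glued-agree-mid} I would also demand $F^\tau(r) = \rho(r)$ on a central band $\abs{r}\leq M^\tau$ with $M^\tau\to\infty$, say $M^\tau = \tau/8$.

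The core technical step is then producing such an $F^\tau$ as a single smooth, strictly increasing function. I would set $F^\tau(r) = \rho(r)$ on $\abs{r}\leq\tau/8$, $F^\tau(r) = R^{01}_-(r-\tau/2)$ on $r\geq\tau/4$, and $F^\tau(r) = R^{12}_+(r+\tau/2)$ on $r\leq-\tau/4$, and interpolate on the two transition zones by a smooth convex combination cut off by a standard bump. The hypotheses $\lim_{r\to-\infty}R^{01}_-(r) > \lim_{r\to\infty}\rho(r)$ and $\lim_{r\to-\infty}\rho(r) > \lim_{r\to\infty}R^{12}_+(r)$ are precisely what makes the interpolation monotone: for $\tau$ large enough, throughout the positive transition zone $[\tau/8,\tau/4]$ both $\rho(r)$ and $R^{01}_-(r-\tau/2)$ are already close to their limits and satisfy $R^{01}_-(r-\tau/2) > \rho(r)$, so the convex combination interpolates strictly upwards as $r$ increases, and symmetrically on the negative side. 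The hard part will be to arrange the cutoff so that $(F^\tau)'>0$ at every point of the transition zones, not merely in an average sense.

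Finally I would verify the four asserted properties. Properties \ref{item:interpolating-glued-agree-cob} and \ref{item:interpolating-glued-agree-mid} are in fact stronger than $\cc^\infty_\loc$ convergence: any fixed compact subset of $\hat W^{\ell(\ell+1)}$ (resp.\ of $\s V^1$) eventually lies in the region where $\tilde\lambda^\tau$ equals $\tilde\lambda^{\ell(\ell+1)}$ (resp.\ $e^{\rho(r)}\alpha^1$) on the nose. For \ref{item:interpolating-glued-compatible}, on $\inc^{\ell(\ell+1)}(\hat W^{\ell(\ell+1)})$ the almost complex structure $\hat J^{02,\tau}$ agrees with $\hat J^{\ell(\ell+1)}$ for $\tau$ large (by the construction at the end of \S\ref{subsubsec:J-on-cobords}) and compatibility comes from Lemma~\ref{lem:interpolating-form}\ref{item:interpolating-compatible}; on the neck $\hat J^{02,\tau}$ agrees with $\wh{J^1}$, and the standard computation showing $\wh{J^1}$-compatibility of $\dv(e^r\alpha^1)$ adapts directly to show $\wh{J^1}$-compatibility of $\dv(e^{F^\tau(r)}\alpha^1)$ as soon as $F^\tau$ is strictly increasing. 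Property \ref{item:interpolating-glued-horiz} follows from the analogous property of each of $\tilde\lambda^{01}$, $\tilde\lambda^{12}$ (Lemma~\ref{lem:interpolating-form}\ref{item:interpolating-horiz}) and $\alpha^1$ (by admissibility, Definition~\ref{defn:admissible}\ref{item:admissible-converge}), together with the fact that $F^\tau$ depends only on the $t$-invariant coordinate $r$.
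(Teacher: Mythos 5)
Your proof takes exactly the paper's approach: define $\tilde\lambda^\tau$ to agree with the respective $(\inc^{\ell(\ell+1)})_*\tilde\lambda^{\ell(\ell+1)}$ away from the neck, with $e^{\rho(r)}\alpha^1$ on a central band, and with $e^{F^\tau(r)}\alpha^1$ on the two transition zones for a strictly increasing $F^\tau$ whose existence is guaranteed by the stated inequalities among the limits of $R^{01}_-$, $\rho$, and $R^{12}_+$. The paper likewise just asserts that a strictly increasing extension exists, and the cutoff you flag as the ``hard part'' is in fact routine: taking $F^\tau = (1-\chi)\rho + \chi\, R^{01}_-(\cdot-\tau/2)$ with $\chi$ a monotone increasing cutoff gives $(F^\tau)' = (1-\chi)\rho' + \chi\,(R^{01}_-)' + \chi'\bigl(R^{01}_-(\cdot-\tau/2)-\rho\bigr)>0$ pointwise, since $\rho',(R^{01}_-)',\chi'\ge 0$ with $\rho'>0$ where $\chi=0$, $(R^{01}_-)'>0$ where $\chi=1$, and $R^{01}_-(r-\tau/2)>\rho(r)$ throughout the transition zone for $\tau$ large.
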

\begin{proof}
  Let us identify $\s V^1$ with $\R\times V^1$ via the isomorphism induced by $\alpha^1$. In particular, we have the map $j_1\colon\R\times V^1\to W^{02,\tau}$, as well as the maps $j_-^{01}\colon\R\times V^1\to\hat W^{01}$ and $j_+^{12}\colon\R\times V^1\to\hat W^{12}$.

  Since $\hat J^{01}$ and $\hat{J}^{12}$ are admissible, there is some $C>0$ such that $(j_-^{01})^*\hat J^{01}=\wh{J^1}$ on $(-\infty,-C)$ and $(j_+^{12})^*\hat J^{12}=\wh{J^1}$ on $(C,\infty)$. It follows (by the definition of $\hat{J}^{02,\tau}$) that $j_1^*\hat{J}^{02,\tau}=\wh{J^1}$ on $(-\tau/2+C,\tau/2-C)\times V^1$. Let us set $\sigma\defeq\tau/2-C$.

  We take $\tilde{\lambda}^\tau$ to be equal to $(\inc^{01})_*\tilde\lambda^{01}$ or $(\inc^{12})_*\tilde\lambda^{12}$ outside of $j_1((-\sigma/2,\sigma/2)\times V^1)$. It remains to define $\tilde{\lambda}^\tau$ on $j_1((-\sigma/2,\sigma/2)\times V^1)$.

  Next, on $j_1([-\sigma/4,\sigma/4]\times V^1)$, we define $\tilde\lambda^\tau$ to be $(j_1)_*(e^\rho\alpha^1)$.

  We now have that, on $j_1(((-\sigma,-\sigma/2]\cup{}[-\sigma/4,\sigma/4]\cup{}[\sigma/2,\sigma))\times V^1)$, $\tilde\lambda$ is equal to $e^{F(r)}\alpha^1$ for some function $F$ which is strictly increasing, and hence admits an extension to the whole interval $(-\sigma/2,\sigma/2)$ which is still strictly increasing.

  Property~\ref{item:interpolating-glued-horiz} follows from the corresponding properties on $\hat{W}^{\ell(\ell+1)}$. Property~\ref{item:interpolating-glued-compatible} is proven in the same way as in Lemma~\ref{lem:interpolating-form}.

  Finally, the convergence $\cc^\infty_\loc$-convergence in \ref{item:interpolating-glued-agree-cob}~and~\ref{item:interpolating-glued-agree-mid} is trivial since on any compact subset, the family of forms is eventually of constant.
\end{proof}

We now come to the main result of this section. We refer to \cite{compactness-in-sft,pardon-contact-homology-and-vfc} for the notion of \emph{holomorphic curve (or holomorphic building) with punctures asymptotic to Reeb orbits} in a symplectic cobordism.

As we will only be using a certain restricted class of such holomorphic curves and buildings, we make the following definition so as to avoid repeating the assumptions every time:
\begin{defn}
  Given a symplectic cobordism $\hat W$ from $(V^+,\xi^+)$ to $(V^-,\xi^-)$, representing contact forms $\alpha^\pm$ on $V^\pm$, and an almost complex structure $\hat J$ on $\hat W$, we say that a punctured $\hat J$-holomorphic curve or building $u$ in $\hat W$ is \defword{relevant} if it has genus 0, all of its negative punctures are asymptotic to Reeb orbits of $\alpha^-$ and it has a single positive puncture asymptotic to a Reeb orbit of $\alpha^+$, which we call its \defword{positive end}.
\end{defn}

\begin{propn}\label{propn:compactness-glued}
  Let $(\hat W^{\ell(\ell+1)},\hat\omega^{\ell(\ell+1)})$ ($\ell=0,1$) be symplectic cobordisms from $(V^\ell,\xi^\ell,X^\ell,t^\ell)$ to $(V^{\ell+1},\xi^{\ell+1},X^{\ell+1},t^{\ell+1})$.

  Fix admissible pairs $(\alpha^k,J^k)$ on $V^k$ (for $k=0,1,2$) and admissible almost complex structures $\hat J^{\ell(\ell+1)}$ on $\hat W^{\ell(\ell+1)}$ with respect to these, so that we have an induced admissible complex structure $\hat J^{02,\tau}$ on $\hat W^{02,\tau}$ for each $\tau\in{}[0,\infty)$.

  Also fix a constant $a>0$.

  Then there exists $T>0$ such that, for any $\tau\in{}[0,\infty)$, any relevant $\hat{J}^{02,\tau}$-holomorphic curve in $\hat W^{02,\tau}$ with positive end a Reeb orbit of $\alpha^0$ of action $<a$ is contained within $\set{t<T}\subset\hat W^{02,\tau}$.
\end{propn}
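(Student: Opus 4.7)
I argue by contradiction. Suppose no such $T$ works; then there is a sequence of relevant $\hat J^{02,\tau_n}$-holomorphic curves $u_n$ in $\hat W^{02,\tau_n}$ with positive-end $\alpha^0$-action $<a$ and with $t_n\defeq\sup\set{t(p):p\in\mathrm{image}(u_n)}\to\infty$. Each $t_n$ is finite: the asymptotic Reeb orbits at the punctures of $u_n$ lie in some fixed set $\set{t\le T_0}$ by Proposition~\ref{propn:admiss-reeb-orbits}, and on the (relatively) compact part of the domain $t\circ u_n$ attains its maximum. The plan is to combine a uniform upper bound on the $\tilde\omega^{\tau_n}$-area of $u_n$ with a uniform-in-$\tau$ lower bound on the area that $u_n$ deposits in small balls centered at points of $\Op(\infty)$, and then exploit the fact that any curve reaching arbitrarily large $t$ must pass through arbitrarily many disjoint $t$-slabs.

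For the upper bound, take $\tilde\omega^\tau=\dv\tilde\lambda^\tau$ as in Lemma~\ref{lem:interpolating-form-glued}. A direct computation in the $\tau$-gluing model (using that $j^{02,\tau}_\pm=\inc^{\ell(\ell+1)}\circ j^{\ell(\ell+1)}_\pm\circ\mu_{\mp\tau/2}$ and that $(\inc^{\ell(\ell+1)})^*\tilde\lambda^\tau$ equals $\tilde\lambda^{\ell(\ell+1)}$ on the relevant region) shows that $(j^{02,\tau}_+)^*\tilde\lambda^\tau=e^{R^{01}_+(r-\tau/2)}\alpha^0$ and $(j^{02,\tau}_-)^*\tilde\lambda^\tau=e^{R^{12}_-(r+\tau/2)}\alpha^2$ at the vertical ends, with $R^{01}_+$ bounded above and $R^{12}_-$ bounded below (Lemma~\ref{lem:interpolating-form}~\ref{item:interpolating-vert}). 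Stokes' theorem applied to $u_n$, together with the hypothesis on positive-end action and the non-negativity of the Reeb periods at the negative ends, then yields a uniform upper bound $\int_{u_n}u_n^*\tilde\omega^{\tau_n}\le C=C(a)$.

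For the uniform monotonicity estimate, condition~\ref{item:interpolating-glued-horiz} of Lemma~\ref{lem:interpolating-form-glued} and the admissibility of $\hat J^{02,\tau}$ give $\cc^\infty_\loc$-convergence of $(\tilde\omega^\tau,\hat J^{02,\tau})$ as $t_0\to\infty$ on the slabs $\set{t_0<t<t_0+L}\cong\Sigma\times(0,L)$; moreover, on $\Op(\infty)$ the whole family $(\tilde\omega^\tau,\hat J^{02,\tau})_\tau$ is, up to the translations $\mu_{\pm\tau/2}$, assembled from the fixed data on $\hat W^{01}$ and $\hat W^{12}$ (with any remaining $\tau$-dependence in a compact family). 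Combined with the compactness of $\Sigma$, this yields constants $r_0,\epsilon_0>0$ independent of $\tau$ such that the standard monotonicity lemma applies uniformly in $\tau$: any $\hat J^{02,\tau}$-holomorphic curve passing through a point $p$ with $t(p)>T_0+r_0$ and with no punctures in $B(p,r_0)$ deposits area $\ge\epsilon_0$ in that ball. By Proposition~\ref{propn:admiss-reeb-orbits}, the punctures of $u_n$ are confined to $\set{t\le T_0}$, so the estimate applies to $u_n$.

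To conclude, use connectedness of the image of $u_n$ and the intermediate value theorem, applied along a path in the domain from a puncture to a point realizing $t_n$, to find points $p_n^k\in u_n$ with $t(p_n^k)=T_0+2Mkr_0$ for $k=1,\dots,N_n\defeq\floor{(t_n-T_0)/(2Mr_0)}$, where $M$ is a uniform upper bound for the norm of $\dt$ in the metrics associated with $(\tilde\omega^\tau,\hat J^{02,\tau})$ on $\Op(\infty)$ (bounded for the same uniformity reasons). The balls $B(p_n^k,r_0)$ are then pairwise disjoint, each contributes area $\ge\epsilon_0$, and so the total area of $u_n$ diverges as $n\to\infty$, contradicting the upper bound. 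The principal obstacle is the uniformity in $\tau$ of the monotonicity constant; this depends crucially on the admissibility conditions, which force $\tau$-independent asymptotic behavior on $\Op(\infty)$, and on the compactness of $\Sigma$.
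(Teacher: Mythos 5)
You take a genuinely different route from the paper: rather than extracting a $t$-slab, pushing to a fixed model, translating, and invoking Fish's target-local Gromov compactness (\cite{fish-target-local}), you try to run a direct uniform monotonicity argument. The Stokes upper bound on $\int u_n^*\tilde\omega^{\tau_n}$ is fine and matches the paper. However, the uniform lower bound is where the argument breaks.

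The issue is that the constants $r_0,\epsilon_0$ (and also the bound $M$ on $\|\dt\|_g$) cannot be taken uniform over \emph{all} points $p$ with $t(p)>T_0+r_0$, because the compatible metric induced by $(\tilde\omega^\tau,\hat J^{02,\tau})$ degenerates as one goes to \emph{vertical} infinity. Concretely, on the vertical ends $\tilde\omega^\tau=\dv(e^{R_\pm(r_\pm)}\alpha^\pm)$ with $R_\pm$ bounded monotone, so $R_\pm'\to 0$ and the induced metric satisfies $g(\partial_{r_\pm},\partial_{r_\pm})=e^{R_\pm}R_\pm'\to 0$; the same happens for $g(\Rb,\Rb)$. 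Hence as $|r_\pm|\to\infty$ the metric collapses in the $\partial_r$ and $\Rb$ directions, the metric balls $B(p,r_0)$ become arbitrarily long in the $r$-coordinate, and neither the monotonicity constant nor $\sup\|\dt\|_g$ is uniform there. Your appeal to the compactness of $\Sigma$ and to the $\cc^\infty_\loc$-convergence on $t$-slabs only gives uniformity in the $t$- and $\Sigma$-directions, not in $r$. And the chosen points $p_n^k$ at prescribed $t$-levels are \emph{not} under control in $r$: the part of $u_n$ with $t>T_0+1$ is indeed compact, but its $r$-extent depends on $n$, so the $p_n^k$ may escape to vertical infinity as $n\to\infty$. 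This is exactly the obstruction the paper acknowledges in its footnote: Gromov monotonicity does suffice in the vertically bounded cobordism pieces ($\dag=\ell(\ell+1)$), but \emph{not} in the symplectization pieces ($\dag=k$) ``since in this case the curves $v_i^k$ cannot be assumed to (have components that) remain in a compact region.'' The paper's fix is to localize to a $t$-slab, identify with a fixed model, \emph{vertically translate} the curve so its $r$-supremum (or infimum) lands at a fixed level inside a fixed compact region $\K$, and only then apply \cite[Theorem~3.1]{fish-target-local}. Your argument would need some analogue of this translation step (or an a priori bound on the $r$-coordinates of the $p_n^k$, which does not obviously hold) before the monotonicity estimate can be applied with uniform constants.
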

\begin{proof}
  For the sake of contradiction, suppose that there were a sequence $T_i\tox{i\to\infty}\infty$, a sequence $\tau_i\in{}[0,\infty)$, and a sequence of $\hat J^{02,\tau_i}$-holomorphic curves $v_i$ in $\hat W^{02,\tau_i}$ of the kind specified in the theorem, such that $v_i$ intersects $\set{t\ge T_i}$. Note that since all the Reeb orbits of $\alpha^0$ are contained in some $\set{t\le T}\subset V^0$, each $v_i$ must also intersect $\set{t\le T}\subset\hat{W}^{02,\tau}$.

  Let $\tilde{\omega}^{\ell(\ell+1)}=\dv\tilde{\lambda}^{\ell(\ell+1)}$ (for $\ell=0,1$) be forms on $\hat{W}^{\ell(\ell+1)}$ and $\tilde{\omega}^{\tau}=\dv\tilde{\lambda}^{\tau}$ be forms on $\hat{W}^{02,\tau}$ as in Lemma~\ref{lem:interpolating-form-glued}.

  Fix some small $\varepsilon>0$, and for each $n\in\Z\cap(T',\infty)$, consider the subset $C_{n,\tau}=\set{n-\varepsilon<t<n+1+\varepsilon)}$ of $\hat{W}^{02,\tau}$.

  Since by assumption $\tilde{\lambda}^{01}$ converges on $\Op_{\hat W^{01}}(+\infty)$ to a multiple $C\alpha^0$ of $\alpha^0$, we have by Stokes' theorem that $\int v_i^*\tilde{\omega}^{\tau_i}<Ca$. Hence, there must be some sequence of indices $n_i\tox{i\to\infty}\infty$ (where we can assume $T'+3<n_i<T_i-3$) such that $\int_{S_i}v_i^*\tilde{\omega}^{\tau_i}\tox{i\to\infty}0$, where $S_i\defeq v_i\I(C_{n_i,\tau_i})$.

  Each $C_{n,\tau}$ is the union of five regions $C_{n,\tau}^k=C_{n\tau}\cap(\hat{W}^{02,\tau})^k$ ($k=0,1,2$) and $C_{n,\tau}^{\ell(\ell+1)}=C_{n,\tau}\cap(\hat{W}^{02,\tau})^{\ell(\ell+1)}$ ($\ell=0,1$), where the $(\hat{W}^{02,\tau})^k$ and $(\hat{W}^{02,\tau})^{\ell(\ell+1)}$ are defined as follows.

  We define $(\hat W^{02,\tau})^k$ for $k=0,2$ to be neighbourhoods of infinity $\Op_{\hat W^{02,\tau}}(+\infty)$ and $\Op_{\hat W^{02,\tau}}(-\infty)$ on which $\hat J^{02,\tau}$ is equal to $(j_+^{02,\tau})_*\wh{J^{0}}$ and $(j_-^{02,\tau})_*\wh{J^{2}}$, respectively (more precisely, we take them to be the images under $\inc^{01}$ and $\inc^{12}$ of some fixed $\Op_{\hat W^{01}}(+\infty)$ and $\Op_{\hat W^{12}}(-\infty)$).

  We similarly take $(\hat{W}^{02,\tau})^1$ to be the image under the embedding
  \[
    j_1=
    \inc^{01}\circ j^{01}_-\circ\mu_{-\tau/2}=
    \inc^{12}\circ j^{12}_+\circ\mu_{\tau/2}
    \colon\s V^1\hto \hat W^{02,\tau}
  \]
  of the largest ``interval'' on which $j_1^*(\hat J^{02,\tau})$ is equal to $\wh{J^{1}}$, where interval means a subset of the form $(a,b)\times V^1$ (with respect to the symplectization parameter induced by $\alpha^1$).

  Finally, we $(\hat{W}^{02,\tau})^{01}$ to be the union of $(\hat{W}^{02,\tau})^1$ and everything ``above'' it, and $(\hat{W}^{02,\tau})^{12}$ to be the union of $(\hat{W}^{02,\tau})^1$ and everything ``below'' it (see Figure~\ref{fig:regions}).
  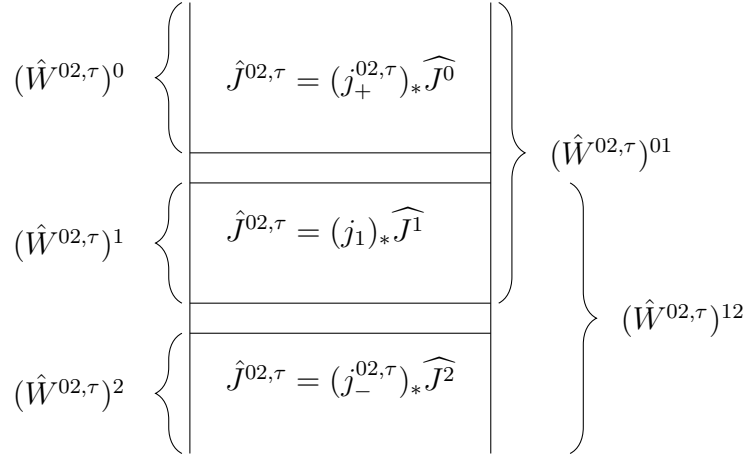
\begin{figure}
    \centering
    \begin{tikzpicture}[scale=2,y=-1cm,
    brace/.style={decorate,decoration={brace,amplitude=10pt,mirror},xshift=-1.5pt,yshift=0pt},
    rbrace/.style={decorate,decoration={brace,amplitude=10pt},xshift=1.5pt,yshift=0pt},
    sbrace/.style={decorate,decoration={brace,amplitude=10pt},xshift=15pt,yshift=0pt}]
    \draw (0,1)   -- (2,1);
    \draw (0,1.2) -- (2,1.2);
    \draw (0,2)   -- (2,2);
    \draw (0,2.2) -- (2,2.2);
    \draw (0,0)   -- (0,3);
    \draw (2,0)   -- (2,3);
    \node at (1,0.5) [align=center] {$\hat{J}^{02,\tau}=(j_+^{02,\tau})_*\wh{J^{0}}$};
    \node at (1,1.5) [align=center] {$\hat{J}^{02,\tau}=(j_1)_*\wh{J^{1}}$\ \ \ };
    \node at (1,2.5) [align=center] {$\hat{J}^{02,\tau}=(j_-^{02,\tau})_*\wh{J^{2}}$};
    \draw [brace] (0,0) -- (0,1) node [midway,xshift=-1.5cm] {$(\hat{W}^{02,\tau})^{0}$};
    \draw [brace] (0,1.2) -- (0,2) node [midway,xshift=-1.5cm] {$(\hat{W}^{02,\tau})^{1}$};
    \draw [brace] (0,2.2) -- (0,3) node [midway,xshift=-1.5cm] {$(\hat{W}^{02,\tau})^{2}$};
    \draw [rbrace] (2,0) -- (2,2) node [midway,xshift=1.5cm] {$(\hat{W}^{02,\tau})^{01}$};
    \draw [sbrace] (2,1.2) -- (2,3) node [midway,xshift=1.5cm] {$(\hat{W}^{02,\tau})^{12}$};
  \end{tikzpicture}
  \caption{The regions $(\hat{W}^{02,\tau})^k$ and $(\hat{W}^{02,\tau})^{\ell(\ell+1)}$.}\label{fig:regions}
\end{figure}

  Next, we have obvious embeddings $C_{n,\tau}^k\to D_n^k\defeq\set{n-\varepsilon<t<n+\varepsilon)}\subset\s V^k$ and $C_{n,\tau}^{\ell(\ell+1)}\to D_n^{\ell(\ell+1)}\defeq\set{n-\varepsilon<t<n+1+\varepsilon}\subset\hat W^{\ell(\ell+1)}$.

  Note that each $D_n^k$ and each $D_n^{\ell(\ell+1)}$ is canonically diffeomorphic, respectively, to $D^k\defeq\set{N-\varepsilon<t<N+1+\varepsilon}\subset\s V^k$ and $D^{\ell(\ell+1)}\defeq\set{N-\varepsilon<t<N+1+\varepsilon}\subset\hat W^{\ell(\ell+1)}$ (for some large fixed $N$). By pushing forward $\tilde{\omega}^{\tau_i}$ and $\hat{J}^{02,\tau_i}$ under the resulting embedding $C_{n_i,\tau_i}^k\to D^k$, we obtain a sequence $(\check{\omega}_i^k,\check{J}_i^k)$ of almost Hermitian structures on the images of these embeddings (and similarly for $D^{\ell(\ell+1)}$).
  Moreover, by the admissibility of $\check{J}_i^k$ and by Lemma~\ref{lem:interpolating-form-glued}, these converge in $\cc^\infty_\loc$ as $i\to\infty$ to an almost Hermitian structure $(\check{\omega}^k,\check{J}^k)=(\dv\check{\lambda}^k,\check{J}^k)$ on $D^k\cong\R\times \Sigma\times(N-\varepsilon,N+1+\varepsilon)$ (and $(\check{\omega}^{\ell(\ell+1)},\check{J}^{\ell(\ell+1)})=(\dv\check{\lambda}^{\ell(\ell+1)},\check{J}^{\ell(\ell+1)})$ on $D^{\ell(\ell+1)}$), where $\lambda^k=\dv(e^{R^k}\alpha_\infty)$ for some smooth bounded function $R^k\colon\R\to\R$, where $\alpha_\infty$ is a contact form on $\Sigma\times(N-\varepsilon,N+\varepsilon)$, and where $\check{J}^k=\wh{J_\infty}$ is the almost-complex structure induced from an $\alpha_\infty$-compatible complex structure on $\ker(\alpha_\infty)$.

  Hence, by restricting the maps $v_i$ and then composing with the embeddings $C^k_{n_i,\tau_i}\to D^k$ and $C^{\ell(\ell+1)}_{n_i,\tau_i}\to D^{\ell(\ell+1)}$, we obtain (respectively) $\check J^{k}_i$-holomorphic and $\check J_i^{\ell(\ell+1)}$-holomorphic curves $v_i^k$ and $v_i^{\ell(\ell+1)}$ in $D^k$ and $D^{\ell(\ell+1)}$.

  We will now prove that for one of the five sequences $v_i^k$ and $v_i^{\ell(\ell+1)}$ (let's call it $v_i^\dag$), we can find a compact set $\K$ in $D^\dag$ such that (possibly after passing to a subsequence, and translating each element in it in an appropriate sense) the parts of $v_i^\dag$ landing in $\K$ converge in the sense of \cite{fish-target-local}, and we will then derive a contradiction by showing that the limiting curve is non-trivial but has zero ``area'' (in an appropriate sense).

  We next explain how to choose the subsequence $v_i^\dag$ and the set $\K$.
  We first define what it means for one of the curves $v_i^{\ell(\ell+1)}$ or $v_i^k$ to be ``well-situated''.

  To begin with, we define 3 special compact subsets $K^{\ell(\ell+1)}_\topp,K^{\ell(\ell+1)}_\bott\subset K^{\ell(\ell+1)}$ of $D^{\ell(\ell+1)}\subset\hat W^{\ell(\ell+1)}$.
  We take $K^{\ell(\ell+1)}_\topp$ to be a ``closed interval'' inside the neighbourhood of infinity $\Op_{\hat W^{\ell(\ell+1)}}(+\infty)$ (i.e., all points of the form $\set{a\le r_\ell\le b}$ for some $a<b$, where $r_\ell$ is the symplectization parameter on $\s V^\ell\hto \hat W^{\ell(\ell+1)}$).
  Similarly, we take $K^{\ell(\ell+1)}_\bott$ to be a closed interval in $\Op_{\hat W^{\ell(\ell+1)}}(-\infty)$.
  Finally, we take $K^{\ell(\ell+1)}$ to be the union of $K^{\ell(\ell+1)}_\topp$ and $K^{\ell(\ell+1)}_\bott$ and everything ``in between''.
  We define $\partial_\pm K^{\ell(\ell+1)}_\topp$ and $\partial_\pm K^{\ell(\ell+1)}_\bott$ to be the ends of the corresponding intervals.

  \begin{figure}
    \centering
    \begin{minipage}[b]{0.48\linewidth}
      \begin{center}
        \begin{tikzpicture}[scale=1.5,every node/.style={scale=0.5,transform shape},y=-1cm,
          brace/.style={decorate,decoration={brace,amplitude=10pt,mirror},xshift=-1.5pt,yshift=0pt},
          rbrace/.style={decorate,decoration={brace,amplitude=10pt},xshift=2.5pt,yshift=0pt},
          sbrace/.style={decorate,decoration={brace,amplitude=10pt},xshift=15pt,yshift=0pt}]
          \draw (0,0.5) -- (2,0.5);
          \draw (0,0.9) -- (2,0.9);

          \draw (0,1.5) -- (2,1.5);
          \draw (0,1.7) -- (2,1.7);

          \draw (0,2.3) -- (2,2.3);
          \draw (0,2.7) -- (2,2.7);

          \fill[opacity=0.1] (0,0.5) -- (2,0.5) -- (2,0.9) -- (0,0.9);
          \fill[opacity=0.1] (0,2.3) -- (2,2.3) -- (2,2.7) -- (0,2.7);
          \draw (0,0)   -- (0,3);
          \draw (2,0)   -- (2,3);
          \node at (0,0.4) [anchor=west] {\scriptsize${}_{\partial_+K^{\ell(\ell+1)}_\topp}$};
          \node at (1,0.7) {$K^{\ell(\ell+1)}_\topp$};
          \node at (0,1.03) [anchor=west] {\scriptsize${}^{\partial_-K^{\ell(\ell+1)}_\topp}$};
          \node at (0,2.2) [anchor=west] {\scriptsize${}_{\partial_+K^{\ell(\ell+1)}_\bott}$};
          \node at (1,2.5) {$K^{\ell(\ell+1)}_\bott$};
          \node at (0,2.83) [anchor=west] {\scriptsize${}^{\partial_-K^{\ell(\ell+1)}_\bott}$};
          \draw [brace] (0,0) -- (0,1.5) node [midway,xshift=-1.5cm] {$(\hat{W}^{02,\tau})^{\ell}$};
          \draw [brace] (0,1.7) -- (0,3) node [midway,xshift=-1.5cm] {$(\hat{W}^{02,\tau})^{\ell+1}$};
          \draw [rbrace] (2,0.5) -- (2,2.7) node [midway,xshift=1.3cm] {$K^{\ell(\ell+1)}$};
        \end{tikzpicture}
      \end{center}
      \caption{The regions $K^{\ell(\ell+1)}_\topp$ and $K^{\ell(\ell+1)}_\bott$.\\}\label{fig:Kregions}
    \end{minipage}
    \begin{minipage}[b]{0.51\linewidth}
      \begin{tikzpicture}[scale=1.5,every node/.style={scale=0.5,transform shape},y=-1cm,
        brace/.style={decorate,decoration={brace,amplitude=10pt,mirror},xshift=-1.5pt,yshift=0pt},
        rbrace/.style={decorate,decoration={brace,amplitude=10pt},xshift=2.5pt,yshift=0pt},
        sbrace/.style={decorate,decoration={brace,amplitude=10pt},xshift=15pt,yshift=0pt}]
        \draw (0,1.0) -- (2,1.0);
        \draw (0,1.4) -- (2,1.4);

        \draw (0,2.3) -- (2,2.3);
        \draw (0,2.5) -- (2,2.5);

        \fill[opacity=0.1] (0,1.0) -- (2,1.0) -- (2,1.4) -- (0,1.4);
        \filldraw[opacity=0.5, fill=red, draw=black]
        (0,0.2) .. controls (1,0.2) and (1,0.6) .. (2,0.5) --
        (2,0.7) .. controls (1,0.8) and (1,0.5) .. (0,0.4);
        \filldraw[opacity=0.8, fill=green, draw=black]
        (0,0.6) .. controls (1,0.9) and (1,1.4) .. (0,1.9) --
        (0,1.7) .. controls (0.8,1.4) and (0.8,0.9) .. (0,0.8);
        \filldraw[opacity=0.5, fill=red, draw=black]
        (2,1.8) .. controls (1,2.2) and (1,2.5) .. (2,2.9) --
        (2,2.7) .. controls (1.4,2.5) and (1.4,2.2) .. (2,2.0);
        \filldraw[opacity=0.8, fill=green, draw=black]
        (0,2.0) .. controls (1,2.1) and (1,1.4) .. (2,1.5) --
        (2,1.7) .. controls (1,1.7) and (1,2.4) .. (0,2.2);
        \draw (0,0)   -- (0,3);
        \draw (2,0)   -- (2,3);
        \node at (1,1.2) {$K^{\ell(\ell+1)}_\topp$};
        \draw [brace] (0,0) -- (0,2.3) node [midway,xshift=-1.5cm] {$(\hat{W}^{02,\tau})^{\ell}$};
      \end{tikzpicture}
      \caption{Two well-situated and two ill-situated curves $v_i^{\ell(\ell+1)}$.}\label{fig:situated}
    \end{minipage}
  \end{figure}

  We now say that a curve $v_i^{\ell(\ell+1)}$ is \emph{well-situated} if it has a component which either (i) intersects both $\set{t=N}\cap K^{\ell(\ell+1)}$ and $\set{t=N+1}\cap K^{\ell(\ell+1)}$; or (ii) intersects both of $\partial_\pm K^{\ell(\ell+1)}_\topp$; or (iii) intersects both of $\partial_\pm K^{\ell(\ell+1)}_\bott$.

  A curve $v_i^k$ is \emph{well-situated} if it intersects both $\set{t=N}$ and $\set{t=N+1}$ and moreover, in the case $k=1$, it should be contained in a ``vertically bounded region'' -- that is, recalling that $v_i^k$ was the image of a curve in $C^1_{n_i,\tau}$ under the embedding $C^1_{n_i,\tau}\hto D^1$, the curve in $C^1_{n_i,\tau}$ should be contained in a finite interval $a<r_1<b$, where $r_1$ is the symplectization parameter on $\s V^1$.

  We now claim that one of the sequences $v_i^{\ell(\ell+1)}$ or $v_i^k$ has a well-situated subsequence, i.e., a subsequence consisting of well-situated curves; in fact, for each fixed $i$, one of the $v_i^{\ell(\ell+1)}$ or $v_i^k$ is well-situated.

  Indeed, fix $i$ and consider a component of $v_i$ (restricted to $C_{n_i,\tau_i}$) intersecting both $\set{t=N}$ and $\set{t=N+1}$.
  If this component is contained entirely in some $C_{n_i,\tau_i}^k$, then $v_i^k$ is well-situated.
  Otherwise, it must be contained in $K^{\ell(\ell+1)}$ or intersect both of $\partial_\pm K_\topp^{\ell(\ell+1)}$ or both of $\partial_\pm K_\bott^{\ell(\ell+1)}$ (for one of $\ell=0,1$), and hence $v_i^{\ell(\ell+1)}$ is well-situated.

  We now take $v_i^\dag$ to be whichever of the $v_i^{\ell(\ell+1)}$ or $v_i^k$ has a well-situated subsequence.
  Let us also re-index and simply write $v_i^\dag$ for the subsequence.

  Next, we explain our choice of $\K$.
  In the case $\dag=\ell(\ell+1)$, we take $\K$ to be a small compact neighbourhood of $K^{\ell(\ell+1)}$ with smooth boundary.\footnote{%
    The only reason we don't simply take $\K=K^{\ell(\ell+1)}$ is that, for the hypothesis of \cite[Theorem~3.1]{fish-target-local}, $\K$ needs to be a ``compact region'', as in \emph{op. cit.} Definition~2.1.
  }

  In the case $\dag=k$, we take $\K$ to be (a small compact neighbourhood of) $I\times\Sigma\times[N,N+1]\subset\R\times\Sigma\times(N-\varepsilon,N+1+\varepsilon)\cong D^k$ for some interval $I$ (where the displayed isomorphism is given by a symplectization coordinate on $D^k$).

  We now want to show that, in either case, we have the data for and fulfill the assumptions of \cite[Theorem~3.1]{fish-target-local}.

  Referring to the notation from \textit{loc. cit.}, we take $\K$ as chosen above, we take $M$ to be a small open neighbourhood of $\K$ in $D^\dag$, we take $(J,g)$ to be $(\check{J}^\dag,\check{g}^\dag)$, where $\check{g}^\dag$ is the metric induced by $\check\omega^\dag$ and $\check{J}^\dag$ and similarly, $(J_i,g_i)=(\check{J_i}^\dag,\check{g_i}^\dag)$. Let us also set $\omega=\check\omega^\dag$ and $\omega_i=\check\omega^\dag_i$.

  Finally, we explain our choice of $u_i$.
  In the case $\dag=$``$\ell(\ell+1)$'', we simply take $u_i$ to be the restriction of $v_i^\dag$ to $M$.

  In case $\dag=0$, consider a connected component of $v_i^0$ that intersects $\set{t=N}$ and $\set{t=N+1}$.
  We will take $u_i$ to be a (restriction of) a translation of this component with respect to the symplectization parameter (the result is still $J_i$-holomorphic since $J_i$ is translation-invariant).
  Specifically, we translate the component so that its supremum in the symplectization direction is $\frac{I_-+I_+}2$ (and then restrict the result to $M$), where $I_\pm$ are the endpoints of the interval $I$ appearing in the definition of $\K$.

  The case $\dag=2$ is the same, with ``supremum'' replaced by ``infimum''.
  In the case $\dag=1$, we may use either the supremum or infimum, because of our assumption on well-situated curves in this case that they are contained in a ``vertically bounded region''.

  We verify the assumptions of the \cite[Theorem~3.1]{fish-target-local}. By definition $J_i$ converges to $J$ (and hence $g_i$ to $g$). The ``generally immersed'' condition is automatic for us since the domains of the curves $v_i$, when restricted to the region $\set{t\ge T'}$ are compact, and the ``robustly $\K$-proper'' condition holds for the same reason. The condition (2) holds trivially since each $v_i$ has genus 0 domain. So it remains to verify (1).

  We must show that the set $\set{\int u_i^*\omega_i}_{i\in\Z_{>0}}$ is bounded above.

  In the case $\dag=$``$\ell(\ell+1)$'', $u_i$ is (up to certain identifications) a restriction of the curve $v_i$ mapping to $\hat W^{02,\tau_i}$, but we have that $\int v_i^*\tilde\omega^{\tau_i}\toxlim{i}0$ by assumption.

  Now consider the case $\dag=k$, in which $v_i^{k}$ was possibly translated to obtain $u_i$.
  In the region $C^k_{n_i,\tau_i}$, $\tilde\omega^{\tau_i}$ has the form
  \[
    \tilde\omega^{\tau_i}=\dv(e^R\alpha^k)=e^R(\dv\alpha^k+R'\dr\wedge\alpha^k).
  \]
  We still know that $\int(v_i^{k})^*\tilde\omega^{\tau_i}\toxlim{i}0$, and hence, by the positivity of $e^R$, $R'$, $(v_i^{k})^*\dv\alpha^k$, and $(v_i^{k})^*\dr\wedge\alpha^k$, and the boundedness of $R$, we conclude
  \[
    \int u_i^*\dv\alpha^k\le
    \int(v_i^{k})^*\dv\alpha^k
    \toxlim{i}0
  \]
  (where the first inequality comes from the translation-invariance of $\dv\alpha^k$ and the fact that $u_i$ is a translation of a restriction of $v_i^{k}$). Moreover we can choose $R$ (in Lemmas~\ref{lem:interpolating-form}~and~\ref{lem:interpolating-form-glued}) so that $R'$ is bounded as well, and so it remains to bound $\int u_i^*(\dr\wedge\alpha^k)=\int_{\tilde S_i}(v_i^k)^*(\dr\wedge\alpha^k)$, where $\tilde S_i=u_i\I(M)$ (and where the equality come from translation invariance).

  Note that $\rstr{v_i^k}{\tilde S_i}$ maps into a subset $\set{r_0\le r\le r_1}\subset\s V^k$ where $r_1-r_0$ is bounded independently of $i$ (namely, it is bounded be the length of $I$). We will in fact bound $A_i=\int_{(v_i^k)\I\set{r_0\le r\le r_1}}(v_i^k)^*\dr\wedge\alpha^k$.

  Let us write $f$ for the composite of $v_i^k$ with the symplectization parameter $\s V\to\R$. Note that, if $f$ had no critical values, we would have
  \[
    A_i=\int_{r_0}^{r_1}\plr{\int_{f\I(r)}(v_i^k)^*\alpha^k}\dr
  \]
  In fact, this is still true for any $f$, but where now the integrand is defined outside the measure zero set of critical values $r$ (see, e.g., \cite[p.210]{sulanke-wintgen-diff-geo-und-faser}). Hence, it remains to bound $\int_{f\I(r)}(v_i^k)^*\alpha^k$. But by Stokes' theorem, $\int_{f\I(r)}(v_i^k)^*e^{R(r)}\alpha^k<Ca$ (where $Ca$ is the bound on $\int v_i^*\tilde\omega^{\tau_i}$ from the beginning of the proof). Since $e^R$ is bounded below, we obtain the desired bound.

  We have now verified the hypotheses of \cite[Theorem~3.1]{fish-target-local}. Hence, the theorem guarantees us a subsequence of $u_k$ which robustly $\K$-converges to some holomorphic curve $u$. We then obtain a contradiction as follows.

  In the case $\dag=\ell(\ell+1)$, we have that $\int u^*\omega=0$ but that $u$ is non-constant with non-empty domain by Lemma~2.2.3 below, which is impossible since $u$ is $J$-holomorphic and $J$ is $\omega$-tame.\footnote{In this case, we could also derive the contradiction from Gromov's monotonicity lemma. The more complicated argument using \cite[Theorem~3.1]{fish-target-local} is needed in the case $\dag=k$ since in this case the curves $v_i^k$ cannot be assumed to (have components that) remain in a compact region.}
  Here, to apply Lemma~2.2.3~\ref{item:fish-contradiction-nonempty}, we take $K_1$ and $K_2$ to be either (i) $K_1=\K\cap\set{t=N}$ and $K_2=\K\cap\set{t=N+1}$, or (ii) $\partial_\pm K^{\ell(\ell+1)}_\topp$, or (iii) $\partial_\pm K^{\ell(\ell+1)}_\bott$.
  That we have a subsequence of curves each of which has a component intersecting both $K_1$ and $K_2$ follows from our definition of ``well-situated curve''.

  In the case $\dag=k$, we instead use that $\int u^*\dv\alpha_\infty=0$, since $\int u_i^*\dv\alpha^k\tox{i\to\infty}0$, as noted above, and again that $u$ is non-constant with non-empty domain.
  This time, we are taking $K_1$ and $K_2$ in Lemma~2.2.3~\ref{item:fish-contradiction-nonempty} either as in (i) above, or we take $K_i=\K\cap\set{r=r_i}$ for $i=1,2$, where again by the definition of ``well-situated'', such $r_1,r_2$ can be chosen so that a subsequence of the $u_i$ intersect both $K_1,K_2$.

  It follows that $u$ must be contained in a product of $\R$ with a union of Reeb flow lines in $\Sigma\times(N-\varepsilon,N+1+\varepsilon)$.
  But this is impossible since, by assumption, each $u_k$, and hence $u$, has supremum or infimum $\frac{I_-+I_+}2$ in the $\R$ direction, but the boundary of $u$ must lie outside of $\K$ by Lemma~\ref{lem:fish-contradiction}~\ref{item:fish-contradiction-compact}.
\end{proof}

\begin{lem}\label{lem:fish-contradiction}
  Let $M,J,g,J_k,g_k,\K,u_k$ be as in \cite[Theorem~3.1]{fish-target-local}, and let $u$ be the robust $\K$-limit of the sequence $u_k$.
  \begin{enumerate}[(i)]
  \item\label{item:fish-contradiction-compact} The domain of $u$ is a compact nodal Riemann surface with boundary, and $u\I(\K)$ is disjoint from the boundary.
  \item\label{item:fish-contradiction-nonempty} If there exists disjoint compact subsets $K_1,K_2\subset\K$ such that a connected component of each $\rstr{u_i}{u_i\I(\K)}$ intersects both $K_1$ and $K_2$, then $\int u^*\omega\ne0$.
  \item\label{item:fish-contradiction-integral} Let $\beta_i\in\Omega^2(M)$ be a sequence of 2-forms with $u_i^*\beta_i\ge0$ for all $i$ and $\int u_i^*\beta_i\tox{i\to\infty}0$.
    Then, if the $\beta_i$ converge in $\cc^\infty_\loc$ to some $\beta\in\Omega^2(M)$, then $\int u^*\beta=0$.
  \end{enumerate}
\end{lem}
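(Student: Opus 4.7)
The plan is to derive all three assertions as essentially bookkeeping consequences of the definition of \emph{robust $\K$-convergence} in \cite{fish-target-local} (see the definitions preceding Theorem~3.1 of \emph{loc. cit.}), together with standard positivity properties of pseudo-holomorphic curves.

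For \ref{item:fish-contradiction-compact}, I expect both claims---that the domain of $u$ is a compact nodal Riemann surface with boundary, and that $u\I(\K)$ avoids that boundary---to be essentially built into the definition of ``robust $\K$-limit'': the word ``robust'' is precisely what enforces that the preimage of $\K$ is bounded away from the boundary of the limit domain. I would therefore simply unpack the relevant definition from \cite{fish-target-local}.

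For \ref{item:fish-contradiction-nonempty}, I would use that robust $\K$-convergence entails $\cc^\infty_\loc$-convergence of $u_i$ to $u$ away from the nodes of $u$, together with a Hausdorff-type convergence of images inside $\K$. Given connected components $C_i$ of $\rstr{u_i}{u_i\I(\K)}$ each meeting both $K_1$ and $K_2$, passing to a subsequence I would extract a connected subtree of components of $u$ (glued through nodes) whose total image in $\K$ meets both $K_1$ and $K_2$. Since $K_1 \cap K_2 = \emptyset$, at least one of these components is non-constant, and $\omega$-tameness of $J$ then gives $u^*\omega \ge 0$ pointwise with strict inequality wherever $\dv u \ne 0$, whence $\int u^*\omega > 0$. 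The main subtlety I anticipate is ensuring that the candidate component is not entirely ``lost'' to bubbling or node-formation in the limit---this is exactly what the Hausdorff part of Fish's convergence is designed to prevent, but it will require carefully matching up conventions with \emph{loc. cit.}

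For \ref{item:fish-contradiction-integral}, I would argue as follows. By $\cc^\infty_\loc$-convergence of $u_i \to u$ on the smooth part of the domain of $u$, and of $\beta_i \to \beta$, the 2-forms $u_i^*\beta_i$ converge in $\cc^\infty_\loc$ to $u^*\beta$ on the smooth part; in particular $u^*\beta \ge 0$ there. For any compact subset $C$ of this smooth part there are corresponding compact subsets $C_i \subset \mathrm{dom}(u_i)$ with
\[
  \int_C u^*\beta \;=\; \lim_i \int_{C_i} u_i^*\beta_i \;\le\; \lim_i \int u_i^*\beta_i \;=\; 0,
\]
where the middle inequality uses $u_i^*\beta_i \ge 0$. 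Exhausting the smooth part of $\mathrm{dom}(u)$ by such $C$ and noting that the nodes form a measure-zero subset then yields $\int u^*\beta = 0$. This part should be essentially routine once \ref{item:fish-contradiction-compact} and the convergence statements from \cite{fish-target-local} are in hand.
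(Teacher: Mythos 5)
Your proposal is correct and follows essentially the same approach as the paper: unpack the definition of robust $\K$-convergence from \cite{fish-target-local} for \ref{item:fish-contradiction-compact}, then use the $\cc^0$-convergence (after reparametrization) to deduce that a connected component of $u$ still meets both $K_1$ and $K_2$, hence is non-constant, so $\omega$-tameness gives \ref{item:fish-contradiction-nonempty}, and use $\cc^1$-convergence of the pulled-back forms plus non-negativity for \ref{item:fish-contradiction-integral}. Your write-up is more detailed than the paper's (which simply cites the relevant convergence statements), but the ideas coincide.
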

\begin{proof}
  \ref{item:fish-contradiction-compact} is simply part of the definition of robust $\K$-convergence.

  \ref{item:fish-contradiction-nonempty} will follow if we can show that some connected component of $u$ still intersects both $K_1$ and $K_2$, since then $u$ is non-constant on that component.

  The definition of robust $\K$-convergence says that certain restrictions of the $u_k$ to a certain subset $\wt{\K}\subset M$ containing $\K$ converge in $\cc^\infty_\loc$, after reparametrization.

  That the limit $u$ still intersects both $K_1$ and $K_2$ follows immediately from the $\cc^0$-convergence (where the restriction to $\wt\K$ does no harm, since the restricted maps will still have a connected component intersecting both $K_1$ and $K_2$).
  Similarly, \ref{item:fish-contradiction-integral} follows immediately from $\cc^1$-convergence.
\end{proof}

\subsubsection{Three more compactness results}
Having established the previous proposition, we now consider three simpler versions. Together, the four of these correspond to Setups~I-IV from \cite[\S1.6]{pardon-contact-homology-and-vfc} (see also \S\S\ref{subsubsec:ch-conv-setups}), but in reverse order.

\begin{propn}
  Let $(\hat W,\set{\omega^\tau}_{\tau\in[0,1]})$ be a family of symplectic cobordisms from $(V^+,\xi^+,X^+,t^+)$ to $(V^-,\xi^-,X^-,t^-)$, and fix admissible pairs $(\alpha^\pm,J^\pm)$ on $V^\pm$ and a family $\set{\hat J^{\tau}}_{\tau\in[0,1]}$ of admissible almost complex structures on $\hat W$.

  Also fix a constant $a>0$.

  Then there exists $T>0$ such that, for any $\tau\in[0,1]$, any relevant $\hat J^\tau$-holomorphic curve in $\hat W$ with positive end a Reeb orbit of action $<a$ is contained within $\set{t<T}\subset\hat W$.
\end{propn}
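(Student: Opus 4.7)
The plan is to repeat the strategy of Proposition~\ref{propn:compactness-glued}, with the simplification that there is only one cobordism to consider (so we may dispense with the partition into three ``levels'' $V^0,V^1,V^2$), and the new wrinkle that we must track the parameter $\tau \in [0,1]$, which is however harmless because $[0,1]$ is compact.

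First I would argue by contradiction: suppose there is no such $T$, so there exist sequences $T_i \to \infty$, $\tau_i \in [0,1]$, and relevant $\hat J^{\tau_i}$-holomorphic curves $v_i$ in $\hat W$ with positive end a Reeb orbit of action $<a$, such that $v_i$ intersects $\set{t \ge T_i}$. By compactness of $[0,1]$, pass to a subsequence so that $\tau_i \to \tau_\infty$. By Proposition~\ref{propn:admiss-reeb-orbits}, each $v_i$ must also intersect some $\set{t \le T_0}$ for a fixed $T_0$. Next I would construct a family $\tilde\omega^\tau = \dv\tilde\lambda^\tau$ of exact symplectic forms on $\hat W$, smoothly depending on $\tau$ and satisfying the analogues of properties \ref{item:interpolating-compatible}--\ref{item:interpolating-horiz} of Lemma~\ref{lem:interpolating-form} uniformly in $\tau$; the construction is parallel to that lemma, using a parametrized version of the extension argument as in Lemma~\ref{lem:acs-family-cobordisms}.

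Now I follow the Stokes'-theorem step of Proposition~\ref{propn:compactness-glued}: since $\tilde\lambda^\tau$ converges on $\Op_{\hat W}(+\infty)$ to a bounded multiple of $\alpha^+$ uniformly in $\tau$, the total area $\int v_i^*\tilde\omega^{\tau_i}$ is bounded by some constant depending only on $a$. Consequently, there exist integers $n_i \to \infty$ with $T_0 + 3 < n_i < T_i - 3$ such that $\int_{v_i^{-1}(C_{n_i})} v_i^*\tilde\omega^{\tau_i} \to 0$, where $C_{n_i} = \set{n_i - \varepsilon < t < n_i + 1 + \varepsilon}$. As in the glued case, decompose $C_{n_i}$ into the positive vertical end region (where $\hat J^{\tau_i}$ equals $(j_+)_*\wh{J^+}$), the negative vertical end region, and a bounded ``middle'' region where $\hat J^{\tau_i}$ interpolates between them. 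By the pigeonhole argument (a component of $v_i$ restricted to $C_{n_i}$ connecting $\set{t=n_i}$ to $\set{t=n_i+1}$ must be entirely in one of these regions or else must cross one of the fixed transverse ``walls''), pass to a subsequence so that all the $v_i$ are ``well-situated'' in a single one of the three regions.

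Then, exactly as in the glued proof, use the canonical identifications between the various $\set{t_0 < t < t_0 + L}$ (together with $\tau_i \to \tau_\infty$ and the $\cc^\infty_\loc$-convergence at horizontal infinity from admissibility \ref{item:admissible-converge} and property~\ref{item:interpolating-horiz} of the interpolating forms) to push forward the restrictions of $v_i$ to a fixed model with almost Hermitian structures converging in $\cc^\infty_\loc$. If the selected region is one of the vertical ends, first translate along the symplectization direction to bring the curve into a fixed compact window, as in the glued proof. Apply \cite[Theorem~3.1]{fish-target-local} to extract a robustly $\K$-convergent subsequence with limit $u$, and derive a contradiction from Lemma~\ref{lem:fish-contradiction}: $u$ is non-constant (its domain meets two disjoint compact sets), yet has vanishing area with respect to the appropriate taming form ($\tilde\omega^{\tau_\infty}$ in the middle region, or $\dv\alpha^\pm$ in a vertical end). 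The main obstacle is bookkeeping in the vertical-end case, where one must bound $\int u_i^*(\dv r \wedge \alpha^\pm)$ via the fibration argument (slicing by level sets of the symplectization coordinate and applying Stokes' theorem on each slice), exactly as in Proposition~\ref{propn:compactness-glued}; no new idea is needed.
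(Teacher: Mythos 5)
Your proposal is correct and takes essentially the same approach as the paper. The paper's own proof of this proposition is a one-sentence remark that the argument of Proposition~\ref{propn:compactness-glued} goes through with only three regions $(\hat W)^0$, $(\hat W)^{01}$, $(\hat W)^1$ in place of five, and with Lemma~\ref{lem:interpolating-form} applied (for each $\tau$) in place of Lemma~\ref{lem:interpolating-form-glued}; you have fleshed out exactly that plan, including the (implicit but needed) step of passing to a convergent subsequence $\tau_i\to\tau_\infty$ before extracting the target-local Gromov limit.
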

\begin{proof}
  The proof is the same as for the previous proposition, but simpler, as one only needs to consider three regions $(\hat W)^0$, $(\hat W)^{01}$, and $(\hat W)^1$ instead of five as in Figure~\ref{fig:regions}.

  Also, in this case, we use Lemma~\ref{lem:interpolating-form} in place of Lemma~\ref{lem:interpolating-form-glued}.
\end{proof}

The next two propositions are just special cases of the previous one.

\begin{propn}\label{propn:compactness-cob}
  Let $(\hat W,\hat\omega)$ be a symplectic cobordisms from $(V^+,\xi^+,X^+,t^+)$ to $(V^-,\xi^-,X^-,t^-)$, and fix admissible pairs $(\alpha^\pm,J^\pm)$ on $V^\pm$ and an admissible almost complex structure $\hat J$ on $\hat W$.

  Also fix some $a>0$.

  Then there exists $T>0$ such that any relevant $\hat J$-holomorphic curve in $\hat W$ with positive end of action $<a$ is contained within $\set{t<T}\subset\hat W$.\qed
\end{propn}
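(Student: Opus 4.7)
The plan is to deduce this directly from the preceding proposition (the family version) by taking a constant family. Explicitly, I would set $\hat\omega^\tau \defeq \hat\omega$ and $\hat J^\tau \defeq \hat J$ for all $\tau \in [0,1]$; since $(\hat W,\hat\omega)$ is a symplectic cobordism and $\hat J$ is an admissible almost complex structure with respect to the fixed admissible pairs $(\alpha^\pm,J^\pm)$, the resulting constant family $(\hat W,\set{\hat\omega^\tau}_{\tau\in[0,1]})$ with almost complex structures $\set{\hat J^\tau}_{\tau\in[0,1]}$ satisfies the hypotheses of the previous proposition.

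The previous proposition then produces a constant $T>0$, depending on the same data $(\alpha^\pm,J^\pm,\hat J,a)$, such that any relevant $\hat J^\tau$-holomorphic curve in $\hat W$ with positive end of action $<a$ lies in $\set{t<T}$. Taking any specific $\tau$ (say $\tau=0$) yields the desired conclusion for $\hat J$-holomorphic curves in $\hat W$.

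There is no substantive obstacle: the only content of the argument is packaging the hypotheses correctly so as to apply the previous proposition, and noting that the output constant $T$ does not depend on $\tau$. No new geometric input is needed beyond what has already been established in the proofs of Proposition~\ref{propn:compactness-glued} and its family-of-cobordisms variant.
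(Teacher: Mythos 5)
Your proposal is correct and matches the paper's approach exactly: the paper states that Propositions~\ref{propn:compactness-cob}~and~\ref{propn:compactness-sympl} ``are just special cases of the previous one,'' i.e.\ of the family version, and your constant-family reduction (noting that a single cobordism with a single admissible $\hat J$ trivially constitutes a family of cobordisms with an admissible family of almost complex structures in the sense of the definitions) is precisely the intended specialization.
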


\begin{propn}\label{propn:compactness-sympl}
  Let $(V,\xi,X,t,\alpha,J)$ be a contact manifold with a convex structure, admissible representing one-form, and admissible almost complex structure. Fix a constant $a>0$.

  There exists $T>0$ such that any relevant $\hat J$-holomorphic curve in $\s V$ with positive end a Reeb orbit of action $<a$ is contained within $\set{t<T}\subset\s V$.\qed
\end{propn}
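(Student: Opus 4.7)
The statement is an immediate specialization of Proposition~\ref{propn:compactness-cob}, so the plan is simply to exhibit the data needed to apply that result. Concretely, I would view the symplectization $\s V$, equipped with its canonical symplectic form $\hat\omega$, as the trivial symplectic cobordism from $(V,\xi,X,t)$ to itself (in the sense noted right after Definition~\ref{defn:exact-symplec-cobord}), take both admissible pairs $(\alpha^\pm,J^\pm)$ to equal the given pair $(\alpha,J)$, and take both embeddings $j_\pm$ to be $\id_{\s V}$.

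With these choices it must be checked that $\hat J$ (induced on $\s V$ by the admissible $J$ on $\xi$) is admissible as an almost complex structure on the trivial cobordism with respect to $J^\pm = J$. Compatibility of $\hat J$ with $\hat\omega$ is built into the construction of $\hat J$ from a $\dv\alpha$-compatible $J$; the matching condition $(j_\pm)_*\wh{J^\pm}=\hat J$ on $\Op_{\s V}(\pm\infty)$ is automatic since $j_\pm=\id$ and $J^\pm=J$; and the $\cc^\infty_\loc$-convergence of $\rstr{\hat J}{\set{t_0<t<t_0+L}}$ as $t_0\to\infty$ follows from the $t$-invariance of $J$ near horizontal infinity, which is part of the definition of admissibility for an almost complex structure on $\xi$ (see Remark~\ref{rmk:J-admiss-infty}).

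Applying Proposition~\ref{propn:compactness-cob} to this trivial cobordism then yields the required $T>0$: the notions of ``relevant curve'' agree, since in the symplectization every Reeb orbit of $\alpha$ serves as an admissible asymptotic limit at both the single positive and the various negative punctures. I do not expect any genuine obstacle; the statement is recorded separately only because the symplectization case is the one that enters directly into the construction of $\CH_\bullet$ as outlined in \S\ref{sec:recollections-on-ch}.
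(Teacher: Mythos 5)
Your proposal is correct and matches the paper's intent: the paper records Propositions~\ref{propn:compactness-cob} and \ref{propn:compactness-sympl} with no explicit proof (just $\square$), noting they are special cases of the preceding family-of-cobordisms result, and the specialization to the trivial cobordism $\s V$ with $j_\pm=\id$ and $(\alpha^\pm,J^\pm)=(\alpha,J)$ is exactly the intended argument. Your verification that the induced $\hat J$ is admissible on the trivial cobordism is a useful elaboration (though for the $\cc^\infty_\loc$-convergence condition you should also invoke condition~\ref{item:admissible-converge} of Definition~\ref{defn:admissible} for $\alpha$, since $\hat J$ depends on $\Rb_\alpha$ as well as on $J$).
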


\subsubsection{Holomorphic buildings}
As the moduli spaces in \cite{pardon-contact-homology-and-vfc} consist of holomorphic \emph{buildings} (see \textit{op. cit.} and \cite{compactness-in-sft} for the definition) and not just holomorphic curves, we need to ensure the above results hold for the former as well. However, this follows easily from what we already have:

\begin{cor}\label{cor:fish-argument-buildings}
  Propositions~\ref{propn:compactness-glued}-\ref{propn:compactness-sympl} remain true with ``holomorphic curve'' replaced by ``holomorphic building''.

  (We will refer below to the four claims in this corollary (corresponding to Propositions~\ref{propn:compactness-glued}-\ref{propn:compactness-sympl}, in reverse order!) as Corollary~\ref{cor:fish-argument-buildings}~(I)-(IV).)
\end{cor}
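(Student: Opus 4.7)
The plan is to reduce the claim to the corresponding curve statement (one of Propositions~\ref{propn:compactness-glued}--\ref{propn:compactness-sympl}) by decomposing a building into its connected components. Recall that a holomorphic building consists of finitely many levels, each a disjoint union of connected punctured holomorphic curves mapping into either the main cobordism (at a single distinguished level) or a symplectization of one of its end contact manifolds (at all other levels), with negative punctures on level $k$ paired asymptotically with positive punctures on level $k-1$. Since our building $u$ is relevant --- genus $0$, connected, with a unique positive puncture --- its combinatorial structure is that of a rooted tree whose internal edges are the breaking Reeb orbits. Each connected component $v$ is then itself relevant: genus $0$, a single positive end (either the overall positive end of $u$, or the breaking Reeb orbit at its parent edge), and finitely many negative ends.

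The key step is to show that the positive end of every component of $u$ has action at most $Ca$ for some constant $C$ depending only on the fixed data. For a component $v$ in a symplectization $\s V^\pm$, the classical action formula $\int v^*\dv\alpha^\pm = \mathcal{A}(\gamma^+) - \sum_i \mathcal{A}(\gamma^-_i) \ge 0$ (which uses only $\hat J$-holomorphicity and the admissibility of $\hat J$ with respect to $\alpha^\pm$) gives $\mathcal{A}(\gamma^-_i) \le \mathcal{A}(\gamma^+)$ for each $i$. For a component $v$ in the main cobordism, apply Stokes' theorem to the exact primitive $\tilde\lambda$ furnished by Lemma~\ref{lem:interpolating-form} (or Lemma~\ref{lem:interpolating-form-glued} in the $\tau$-glued setup); since $\tilde\lambda = e^{R_\pm(r_\pm)}\alpha^\pm$ on the ends with $R_\pm$ uniformly bounded, this yields $\sum_i \mathcal{A}(\gamma^-_i) \le C \cdot \mathcal{A}(\gamma^+)$ with $C$ depending only on the (uniform) bounds of the functions $R_\pm$. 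Propagating these inequalities from the root of the tree down to any component, any path through the tree crosses the main-cobordism level at most once (picking up a factor of $C$) and any number of symplectization levels (picking up factors of $1$); since the root's positive end has action less than $a$, every component has its positive end of action less than $Ca$.

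It remains to observe that each component $v$ lives in a manifold to which one of Propositions~\ref{propn:compactness-glued}--\ref{propn:compactness-sympl} applies directly --- either the main cobordism itself, or one of $\s V^\pm$, which is covered by Proposition~\ref{propn:compactness-sympl} using the convex structure $(X^\pm, t^\pm)$ and admissible pair $(\alpha^\pm, J^\pm)$ fixed in the hypothesis. Applying the appropriate proposition with action bound $Ca$ yields a single $T$, depending only on $a$ and the fixed data, such that each component is contained in $\set{t<T}$ in its ambient manifold; since the $t$-function on the main cobordism agrees with $t^\pm$ on the symplectization ends under $j_\pm$, the entire building lies in $\set{t<T}$. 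The main obstacle is really just the bookkeeping for the action-propagation argument above; once one has that, the reduction is essentially formal.
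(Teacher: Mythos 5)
Your proof is correct and takes essentially the same route as the paper's, which likewise decomposes the building into its component curves and applies the appropriate one of Propositions~\ref{propn:compactness-glued}--\ref{propn:compactness-sympl} to each. If anything, you are slightly more careful than the paper: the paper's proof asserts that every component has positive end of action $<a$, whereas (as you correctly observe) components below the cobordism level only inherit a bound of the form $<Ca$ for a cobordism-dependent constant $C$; since the propositions hold for arbitrary action bound, this discrepancy is harmless, but your version makes the bookkeeping explicit.
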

\begin{proof}
  The proof is the same in all four cases. By Stokes' theorem, each component of a relevant holomorphic building with positive end of action $<a$ will be a relevant holomorphic curve (possibly in $\s V^\pm$, $\s V^k$, or $\hat W^{\ell(\ell+1)}$) with a positive end of action $<a$. Hence, applying the appropriate instances of Propositions~\ref{propn:compactness-glued}-\ref{propn:compactness-sympl} and taking the maximum of all the resulting $T$, we constrain all of the component holomorphic curves, and hence the holomorphic building, to $\set{t<T}$.
\end{proof}

\subsubsection{Relative versions}\label{subsubsec:rel-compactness}
Finally, we will need the following ``relative'' version of the last two compactness results (i.e., versions (I) and (II)), in which we only fix the complex structure on a large compact subset.

\begin{propn}\label{propn:compactness-sympl-rel}
  Let $(V,\xi,X,t,\alpha,J)$ be a contact manifold with a convex structure, admissible representing one-form, and admissible almost complex structure. Fix a constant $a>0$.

  There exists $T>0$ such that, for any admissible almost complex structure $K$ on $\xi$ which agrees with $J$ on $\set{t<T}\subset V$, any relevant $\hat K$-holomorphic building in $\s V$ with positive end of action $<a$ is contained within $\set{t<T}\subset\s V$.
\end{propn}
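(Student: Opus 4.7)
The plan is to argue by contradiction, essentially reproducing the Fish target-local argument from Proposition~\ref{propn:compactness-glued} (in its simpler symplectization-only incarnation of Proposition~\ref{propn:compactness-sympl}), but allowing the almost complex structure to vary along the contradicting sequence. The saving grace is that, because $K_i=J$ on $\set{t<T_i}$ with $T_i\to\infty$, the induced $\hat K_i$ equals $\hat J$ on every compact subset of $\s V$ for $i$ sufficiently large.

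Suppose no such $T$ exists. Extract sequences $T_i\to\infty$, admissible $K_i$ with $K_i=J$ on $\set{t<T_i}$, and relevant $\hat K_i$-holomorphic buildings $v_i$ in $\s V$ with positive ends of action $<a$ meeting $\set{t\ge T_i}$. By Corollary~\ref{cor:fish-argument-buildings} we may replace each $v_i$ by a single component holomorphic curve. By Proposition~\ref{propn:admiss-reeb-orbits}, each positive end lies inside a fixed $\set{t\le T_0}\subset V$, so $v_i$ meets both $\set{t\le T_0}$ and $\set{t\ge T_i}$ in $\s V$. Taking an interpolating form $\tilde\omega=\dv\tilde\lambda$ on $\s V$ furnished by Lemma~\ref{lem:interpolating-form} (with $\hat J$-compatibility), Stokes yields a uniform bound $\int v_i^*\tilde\omega<Ca$ depending only on $\tilde\lambda$ and $a$. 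This forces the existence of integers $n_i\to\infty$ with $T_0+3<n_i<T_i-3$ such that $\int_{v_i\I(C_{n_i})}v_i^*\tilde\omega\to 0$, where $C_n=\set{n-\varepsilon<t<n+1+\varepsilon}$.

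Translate $C_{n_i}$ to a fixed model region $D\cong\R\times\Sigma\times(N-\varepsilon,N+1+\varepsilon)$ via the lift $\hat X$ of $X$ to $\s V$ and push forward $(\tilde\omega,\hat K_i)$ to get $(\check\omega_i,\check K_i)$ on $D$. Since $C_{n_i}\subset\set{t<T_i}$, where $K_i=J$, and both $J$ and $K_i$ are $t$-invariant on $\Op_V(\infty)$, the pushforward $\check K_i$ on $D$ is equal to the pushforward of $\hat J$ for all sufficiently large $i$; hence $\check K_i\to\check J$ in $\cc^\infty_\loc$ trivially, and $\check\omega_i\to\check\omega$ by Lemma~\ref{lem:interpolating-form}~\ref{item:interpolating-horiz} combined with condition~\ref{item:admissible-converge} of Definition~\ref{defn:admissible}. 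Now replicate the $\dag=k$ case of the proof of Proposition~\ref{propn:compactness-glued}: extract a component of $v_i$ crossing $\set{t=N}$ and $\set{t=N+1}$, translate in the symplectization direction so that its supremum is a fixed interior point of an interval $I\subset\R$ defining a compact region $\K\subset D$, and apply \cite[Theorem~3.1]{fish-target-local}. The uniform $\int u_i^*\omega_i$-bound needed to apply Fish follows from the translation-invariance of $\dv\alpha$ combined with a fiberwise Stokes bound, exactly as in Proposition~\ref{propn:compactness-glued}.

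The resulting limit $u$ has $\int u^*\dv\alpha_\infty=0$ by Lemma~\ref{lem:fish-contradiction}~\ref{item:fish-contradiction-integral}, yet has a connected component of nonempty domain meeting both $\K\cap\set{t=N}$ and $\K\cap\set{t=N+1}$ by Lemma~\ref{lem:fish-contradiction}~\ref{item:fish-contradiction-nonempty}; thus $u$ must factor through a union of Reeb trajectories in $\Sigma\times(N-\varepsilon,N+1+\varepsilon)$, contradicting the prescribed supremum of its $\R$-coordinate together with the boundary-disjointness of Lemma~\ref{lem:fish-contradiction}~\ref{item:fish-contradiction-compact}. The main obstacle one might worry about is setting up the contradiction despite the varying $\hat K_i$; this is dissolved by the observation above that $\hat K_i=\hat J$ on every fixed compact subset of $\s V$ for $i$ large, so Fish's $\cc^\infty_\loc$-convergence hypothesis is satisfied trivially and no diagonal extraction is required.
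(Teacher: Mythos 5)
Your proposal follows the same structure as the paper's proof: reduce from buildings to curves, run the Fish target-local contradiction from Proposition~\ref{propn:compactness-glued} in its symplectization incarnation, and exploit the observation that $\hat K_i=\hat J$ on every fixed compact subset of $\s V$ for $i$ large so that the $\cc^\infty_\loc$-convergence hypothesis of \cite[Theorem~3.1]{fish-target-local} is automatic. That last observation is indeed the key point the paper emphasizes ("all further mention of $J$ refers to its restriction to $\set{t<T}$, where it is equal to $K_i$").

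There is, however, one spot you gloss over that the paper flags explicitly. You invoke Lemma~\ref{lem:interpolating-form} to produce $\tilde\omega$ and note it is $\hat J$-compatible; but the curves $v_i$ are $\hat K_i$-holomorphic, not $\hat J$-holomorphic, so the crucial positivity $v_i^*\tilde\omega\ge 0$ --- needed both to extract a thin slab $C_{n_i}$ of small $\tilde\omega$-area and to apply the fiberwise-Stokes bound --- requires $\tilde\omega$ to tame $\hat K_i$ for every $i$, not merely $\hat J$. Since $K_i\ne J$ at infinity, this is not a priori guaranteed by $\hat J$-compatibility alone. The paper resolves this by inspecting the proof of Lemma~\ref{lem:interpolating-form}: the construction of $\tilde\omega$ depends on $\hat J$ only through the sets $\Op(\pm\infty)$ on which $\hat J$ is cylindrical, and in the trivial cobordism $\s V$ \emph{every} $\hat K_i$ is cylindrical on all of $\s V$, so the lemma produces a single $\tilde\omega=\dv(e^{R(r)}\alpha)$ compatible with all $\hat K_i$ simultaneously. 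With this point added, your argument goes through and matches the paper's.
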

\begin{proof}
  The statement is deduced from the corresponding statement with holomorphic curves instead of buildings in the same way that Corollary~\ref{cor:fish-argument-buildings}~(I) is deduced from Proposition~\ref{propn:compactness-sympl}.

  The proof of the statement with holomorphic curves is proven in the same way as Proposition~\ref{propn:compactness-glued}. The only difference is, in the proof by contradiction, in addition to considering a sequence $T_i\tox{i\to\infty}\infty$ one must also consider a sequence of complex structures $K_i$ agreeing with $J$ on $\set{t<T_i}$, and then consider a sequence $v_i$ of $K_i$-holomorphic curves, rather than $J$-holomorphic curves.

  However, the rest of the proof is exactly the same; in particular, all further mention of $J$ refers to its restriction to $\set{t<T}$, where it is equal to $K_i$, and so the same arguments go through with $K_i$.

  The one exception is the application of Lemma~\ref{lem:interpolating-form} to produce a $\hat J$-compatible symplectic form $\tilde\omega$; here, it must be $\hat K_i$-compatible for all $i$. However, inspecting the proof of that lemma, we see that the only way in which the construction of $\tilde\omega$ depends on the given complex structure $\hat J$ is through the sets $\Op(\pm\infty)$ on which it is ``cylindrical'' (i.e., equal to $\wh{J^\pm}$). In the present case, all of the $\hat K_i$ are cylindrical on all of $\s V$, and so the lemma produces a single $\tilde\omega$ which is $\hat K_i$-compatible for all $i$.
\end{proof}

\begin{propn}\label{propn:compactness-cob-rel}
  Let $(\hat W,\hat\omega)$ be a symplectic cobordism from $(V^+,\xi^+,X^+,t^+)$ to $(V^-,\xi^-,X^-,t^-)$, and fix admissible pairs $(\alpha^\pm,J^\pm)$ on $V^\pm$ and an admissible almost complex structure $\hat J$ on $\hat W$.

  Fix constants $M_\pm\in\R$ and $a>0$.

  Then there exists $T>0$ such that all Reeb orbits of $\alpha^+$ of action $<a$ are contained in $\set{t^+<T}\subset V$, and having the following property.

  Fix any representing contact forms $\tilde\alpha^\pm$ and $\dv\tilde\alpha^\pm$-compatible almost complex structures $\tilde J^\pm$ on $V^\pm$ which agree with $\alpha^\pm$ and $J^\pm$ on $\set{t^\pm<T}\subset V^\pm$, and let $r_\pm\colon\s V^\pm\to\R$ (and $r_\pm\colon j_\pm(\s V^\pm)\to\R$) be the corresponding symplectization coordinates.

  Next, fix an almost complex structure $\hat K$ on $\hat W$ which agrees with $\hat J$ on $\set{t<T}\subset\hat W$ and which is equal to $(j_\pm)_*\wh{\tilde J^\pm}$ on $\set{\pm\tilde r_\pm\ge M_\pm}$ (where $\tilde r_\pm$ are the symplectization coordinates coming from $\tilde\alpha^\pm$).

  Then any relevant $\hat K$-holomorphic building in $\s V$ with positive end a Reeb orbit of action $<a$ is contained within $\set{t<T}\subset\hat W$.
\end{propn}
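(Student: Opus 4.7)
The plan is to adapt the argument of Proposition~\ref{propn:compactness-sympl-rel} to the cobordism setting, essentially combining its relative strategy with the framework of Proposition~\ref{propn:compactness-cob}. First, I would reduce from buildings to curves using the Stokes' theorem argument from Corollary~\ref{cor:fish-argument-buildings}: each component of a relevant $\hat K$-holomorphic building of action $<a$ is itself a relevant curve of action $<a$, either in $\hat W$ or in a symplectization $\s V^\pm$. The symplectization components can be confined to $\set{t^\pm<T}\subset\s V^\pm$ by applying Proposition~\ref{propn:compactness-sympl-rel} to the data $(\tilde\alpha^\pm,\tilde J^\pm)$ (after possibly enlarging $T$), so it remains to confine the $\hat W$-components.

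To prove the curve statement, I would argue by contradiction: assume a sequence $T_i\to\infty$, data $(\tilde\alpha^\pm_i,\tilde J^\pm_i,\hat K_i)$ satisfying the hypotheses with $T=T_i$, and $\hat K_i$-holomorphic curves $v_i$ with positive end of action $<a$ intersecting $\set{t\ge T_i}$. For each $i$, I would construct an exact symplectic form $\tilde\omega_i=\dv\tilde\lambda_i$ on $\hat W$ which is $\hat K_i$-compatible, by a direct variant of Lemma~\ref{lem:interpolating-form}: on the cylindrical vertical ends $\set{\pm\tilde r_\pm\ge M_\pm}$ set $\tilde\lambda_i=e^{\tilde R_\pm(\tilde r_\pm)}\tilde\alpha^\pm_i$ with bounded increasing $\tilde R_\pm$ eventually equal to $\tilde r_\pm$ (so $\tilde\lambda_i$ agrees with $\hat\lambda$ far out); elsewhere, where $\hat K_i=\hat J$, apply the original Lemma~\ref{lem:interpolating-form} construction to $\hat J$. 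As in the proof of Proposition~\ref{propn:compactness-sympl-rel}, the only region where the construction of $\tilde\omega_i$ really depends on $\hat K_i$ is the cylindrical vertical end, which is disjoint from the horizontal slabs analyzed below.

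With the $\tilde\omega_i$ in hand, the remainder of the proof mirrors Proposition~\ref{propn:compactness-cob} (a simpler case of Proposition~\ref{propn:compactness-glued}). Stokes' theorem and the asymptotic form of $\tilde\lambda_i$ at positive vertical infinity bound $\int v_i^*\tilde\omega_i$ by a constant times $a$, so one extracts $n_i\to\infty$ with some large $T'<n_i<T_i-3$ for which $\int_{S_i}v_i^*\tilde\omega_i\to 0$, where $S_i=v_i\I(\set{n_i-\varepsilon<t<n_i+1+\varepsilon})$. For $i$ large, $n_i+1+\varepsilon<T_i$, so on these slabs $\hat K_i=\hat J$ and the convergence arguments of Proposition~\ref{propn:compactness-cob} apply verbatim. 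Fish's target-local compactness then yields a non-constant limiting $\hat J$-holomorphic curve of zero area on the slab, contradicting $\omega$-tameness of the limiting almost complex structure via Lemma~\ref{lem:fish-contradiction}. The main obstacle is the construction of the $\tilde\omega_i$ compatibly with each $\hat K_i$, but this succeeds for the same reason as in Proposition~\ref{propn:compactness-sympl-rel}: the ``non-$\hat J$'' portion of $\hat K_i$ is confined to the vertical ends and to the horizontal region $T\le t<T_i$, neither of which contributes to the slabs used in the Fish compactness step.
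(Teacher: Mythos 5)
Your proposal follows the paper's argument in essence: prove the curve statement by the contradiction argument of Proposition~\ref{propn:compactness-cob} (a fortiori of Proposition~\ref{propn:compactness-glued}), treating a sequence $(T_i,\tilde\alpha_i^\pm,\tilde J_i^\pm,\hat K_i)$ and constructing a (possibly $i$-dependent) compatible exact form $\tilde\omega_i$ via Lemma~\ref{lem:interpolating-form}, then deduce the building case via the Stokes-theorem reduction of Corollary~\ref{cor:fish-argument-buildings}. The one substantive slip is in your justification that the $\tilde\omega_i$ can be treated as a fixed form on the slabs used in the Fish compactness step: you assert that the ``cylindrical vertical ends'' are disjoint from the horizontal slabs $\set{n_i-\varepsilon<t<n_i+1+\varepsilon}$, but they are not --- each such slab extends all the way to vertical infinity and meets $\set{\pm\tilde r_\pm\ge M_\pm}$ non-compactly (indeed the regions $(\hat W)^0$ and $(\hat W)^2$ in the proof of Proposition~\ref{propn:compactness-glued} are precisely the parts of the slab in the vertical ends). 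The correct reason the $\tilde\omega_i$ coincide on the slabs, which the paper states explicitly, is that one can take the same functions $R_\pm$ for all $i$ (since all $\hat K_i$ become cylindrical at the fixed depths $\pm\tilde r_\pm\ge M_\pm$), and then on $\set{t<T_i}$ --- where $\tilde\alpha_i^\pm=\alpha^\pm$ and hence $\tilde r_\pm=r_\pm$ --- all the forms $\tilde\lambda_i$ reduce to $e^{R_\pm(r_\pm)}\alpha^\pm$ on the vertical ends and to $\hat\lambda$ elsewhere, hence are $i$-independent there. Since the slabs sit inside $\set{t<T_i}$ for $i$ large, this gives the single form the argument needs; your conclusion is right, but the stated disjointness is not, and a reader would get stuck there.
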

\begin{proof}
  The proof is essentially the same as that of Proposition~\ref{propn:compactness-sympl-rel} (which is a special case).

  Here, in addition to the sequence $T_i\tox{i\to\infty}\infty$, we must consider a sequence $(\tilde\alpha_i^\pm,\tilde J_i^\pm)$ of pairs on $V^\pm$, and a sequence of almost complex structures $\hat K_i$ on $\hat W$, all satisfying the given conditions.

  Again, this does not change the rest of the proof (of Proposition~\ref{propn:compactness-cob}), since the assumption is that the $(\tilde\alpha_i^\pm,\tilde J_i^\pm)$ and the $\hat K_i$ agree with $(\alpha,J)$ and with $\hat J$ on the region $\set{t<T_i}$ on which these data are used.

  The one exception again is that the argument requires a single symplectic form $\tilde{\omega}$, whereas applying Lemma~\ref{lem:interpolating-form} potentially gives us a different $\tilde\omega_i$ for each $\hat K_i$. This time, we cannot necessarily guarantee that all of the $\tilde\omega_i$ are equal, but we \emph{can} arrange that they all be equal to some fixed $\tilde\omega$ on $\set{t<T_i}$, and this is good enough.

  Indeed, as in the proof of Proposition~\ref{propn:compactness-sympl-rel}, we recall that the only dependency of the form $\tilde\omega$ constructed in Lemma~\ref{lem:interpolating-form} on the given almost complex structure $\hat J$ is through the constants $N_\pm$ such that $\hat J$ is equal to $\wh{J^\pm}$ on $\pm r_\pm\ge N_\pm$.

  Here, we are explicitly assuming that all of the $\hat K_i$ are equal to $\wh{\tilde J_i^\pm}$ on $\pm \tilde r_\pm\ge M_\pm$. This implies that we can choose the same functions $R_\pm$ in Lemma~\ref{lem:interpolating-form} for all the $\hat K_i$. But then, on $\set{t<T_i}$, where $\tilde\alpha^\pm=\alpha^\pm$, the resulting forms $\tilde\omega_i$ will all just be equal to $\dv(e^{R_\pm(r_\pm)}\alpha^\pm)$ for $\pm r_\pm\ge M_\pm$, and equal to $\hat\omega$ elsewhere on $\set{t<T_i}$.
\end{proof}

\subsection{Interpolation}\label{subsec:inter}
In this section, we prove a technical result allowing us to interpolate, in a certain sense, between any two convex structures.

Let $(X,t)$ be a convex structure on $(V,\xi)$.

\begin{lem}\label{lem:scaling-form}
  Let $\alpha$ be an $(X,t)$-admissible form. Then for any smooth non-decreasing function $g\colon\R\to[1,C]$ such that $g(t)=1$ for $t\le T$ (with $T$ as in Definition~\ref{defn:admissible}), the contact forms $\alpha$ and $g(t)\cdot\alpha$ have the same Reeb orbits.
\end{lem}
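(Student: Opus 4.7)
The plan is to use Proposition~\ref{propn:admiss-reeb-orbits} to confine the Reeb orbits of both $\alpha$ and $g\alpha$ to the region where $g \equiv 1$, and then observe that on this region the two Reeb vector fields coincide.

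More precisely, I would choose $T$ large enough that $\alpha$ satisfies conditions~\ref{item:admissible-s} and~\ref{item:admissible-t} of Definition~\ref{defn:admissible} on $\set{t > T}$, and such that $g(t)=1$ on $\set{t\le T}$ (which is exactly the setup in the lemma). Then I would invoke condition~\ref{item:admissible-scale} of admissibility---this is precisely the condition included ``for the sole purpose'' of this lemma---to conclude that the rescaled form $g(t)\cdot\alpha$ \emph{also} satisfies conditions~\ref{item:admissible-s} and~\ref{item:admissible-t}. Applying Proposition~\ref{propn:admiss-reeb-orbits} to both $\alpha$ and $g\alpha$, all Reeb orbits of both forms are contained in $\set{t<T}$.

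The remaining step is to show that on the open set $\set{t<T}$, the Reeb vector fields $\Rb_\alpha$ and $\Rb_{g\alpha}$ coincide. Since $g \equiv 1$ on a neighborhood of this region, we have $\dv g = 0$ there, and hence
\[
    \dv(g\alpha) = g\,\dv\alpha + \dv g \wedge \alpha = \dv\alpha
\]
on $\set{t<T}$, while $g\alpha = \alpha$. Thus the defining equations $\Rb\intprod(g\alpha)=1$ and $\Rb\intprod\dv(g\alpha)=0$ for $\Rb_{g\alpha}$ reduce to the defining equations for $\Rb_\alpha$, giving $\Rb_{g\alpha}=\Rb_\alpha$ on $\set{t<T}$. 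Since every Reeb orbit of either form lies entirely in this region, the two forms have the same Reeb orbits (as parametrized curves, not merely as point sets).

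There is no real obstacle here: the whole content of the lemma is encoded in condition~\ref{item:admissible-scale} of admissibility together with Proposition~\ref{propn:admiss-reeb-orbits}, both already established. The proof is essentially a two-line application of these facts.
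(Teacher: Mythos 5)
Your proof is correct and follows the same line as the paper's: apply condition~\ref{item:admissible-scale} to see that $g\alpha$ satisfies conditions~\ref{item:admissible-s}~and~\ref{item:admissible-t}, invoke Proposition~\ref{propn:admiss-reeb-orbits} to confine the Reeb orbits of both $\alpha$ and $g\alpha$ to $\set{t<T}$, and observe that the two forms (hence their Reeb vector fields) coincide on that region. The paper's version is more terse and leaves the last step implicit; you have simply spelled it out.
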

\begin{proof}
  By condition~\ref{item:admissible-scale} of Definition~\ref{defn:admissible}, we know that for any such $g$, $g(t)\cdot\alpha$ satisfies conditions~\ref{item:admissible-s}~\ref{item:admissible-t}, and thus the result follows from Proposition~\ref{propn:admiss-reeb-orbits}.
\end{proof}

Now let $(X^\pm,t^\pm)$ be two convex structures on $(V,\xi)$.
\begin{propn}\label{propn:interpolating-form}
  Let $\alpha^\pm$ be representing forms which are $(X^\pm,t^\pm)$-admissible. Then for any $a,T^+>0$, there exists a form $\alpha$ representing $\xi$ with the following properties:
  \begin{enumerate}[(i)]
  \item\label{item:interpolating-form-alpha-plus} $\alpha=\alpha^+$ on $\set{t^+<T^+}$.
  \item\label{item:interpolating-form-alpha-minus} There exist $T^->0$ and $C\ge1$ such that $\alpha$ is equal to $C\alpha^-$ on $\set{t^->T^-}$.
  \item\label{item:interpolating-form-action} The Reeb orbits of $\alpha$ and $\alpha^+$ of action $<a$ are the same.
  \end{enumerate}
\end{propn}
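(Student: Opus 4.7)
The plan is to construct $\alpha$ by modifying $\alpha^+$ in stages on the ``end'' of $V$, and then verify property~\ref{item:interpolating-form-action}. We may assume (enlarging $T^+$ if necessary, which only strengthens~\ref{item:interpolating-form-alpha-plus}) that $T^+$ is at least the admissibility constant for $\alpha^+$, so by Proposition~\ref{propn:admiss-reeb-orbits} all Reeb orbits of $\alpha^+$ lie in $\set{t^+<T^+}$. Choose $T^->0$ large enough that (a) $\alpha^-$ has no Reeb orbits in $\set{t^-\ge T^-}$ (Proposition~\ref{propn:admiss-reeb-orbits} again), and (b) $\set{t^-\ge T^-}\subset\set{t^+>T^+}$ (possible by properness of $t^+$). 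Choose $T^+_1>T^+$ with $\set{t^-\ge T^-}\subset\set{t^+>T^+_1}$, so that the ``collar'' $K\defeq\set{t^+\ge T^+_1}\cap\set{t^-\le T^-}$ is compact.

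Now define $\alpha$ in three pieces. On $\set{t^+\le T^+_1}$, set $\alpha=g(t^+)\alpha^+$ for a smooth non-decreasing $g\colon\R\to[1,C_1]$ equal to $1$ on $\set{t^+\le T^+}$ and to $C_1$ on $\set{t^+\ge T^+_1}$, with $C_1$ large, to be chosen. On $K$, set $\alpha=F\alpha^+$ where $F=(1-\chi)C_1+\chi Cf$, with $f=\alpha^-/\alpha^+$, $\chi\colon K\to[0,1]$ a smooth cutoff equal to $0$ near $\set{t^+=T^+_1}$ and $1$ near $\set{t^-=T^-}$, and $C$ large, to be chosen. On $\set{t^-\ge T^-}$, set $\alpha=C\alpha^-$. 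These pieces glue smoothly into a contact form representing $\xi$, and \ref{item:interpolating-form-alpha-plus}~and~\ref{item:interpolating-form-alpha-minus} hold by construction.

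For \ref{item:interpolating-form-action}, any closed Reeb orbit of $\alpha$ of action $<a$ contained in $\set{t^+\le T^+}$ is a Reeb orbit of $\alpha^+$ of action $<a$, and conversely. A closed orbit lying in $\set{t^+\le T^+_1}$ is a Reeb orbit of the globally-defined form $g(t^+)\alpha^+$ (extending $g$ by $C_1$ past $T^+_1$), which by Lemma~\ref{lem:scaling-form} shares its orbits with $\alpha^+$ and hence lies in $\set{t^+<T^+}$. A closed orbit lying in $\set{t^-\ge T^-}$ is a Reeb orbit of $C\alpha^-$, hence of $\alpha^-$, and so does not exist there by choice of $T^-$. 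The remaining case is a closed orbit visiting the compact region $K\cup\set{t^-\ge T^-}$. Writing $\alpha=F\alpha^+$ there (with $F=Cf$ on $\set{t^-\ge T^-}$), a direct computation yields a pointwise bound $\abs{R_\alpha}\le\abs{R_{\alpha^+}}/F+\abs{dF}/F^2$; by first choosing $C_1$ large and then $C$ large (arranging $C\le C_1^2$, and taking $T^+_1-T^+$ proportional to $C_1$ so $|g'|$ stays bounded), $\abs{R_\alpha}$ is uniformly arbitrarily small on $K\cup\set{t^-\ge T^-}$.

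The main obstacle is to turn uniform smallness of $\abs{R_\alpha}$ into the absence of short closed orbits in $K\cup\set{t^-\ge T^-}$. This is handled by a contradiction-and-compactness argument: assuming such orbits exist for a sequence $C_1^{(k)},C^{(k)}\to\infty$, each orbit's intersection with the compact region $K\cup\set{t^-\ge T^-}$ has arc length $\le a\cdot\abs{R_\alpha}_\infty\to0$, so the portions of the orbits there concentrate at some point $p$; but the direction of $R_\alpha$ at $p$ converges to the nonvanishing direction of $R_{\alpha^+}$, and a flow with nonzero direction cannot close up in arbitrarily small neighborhoods. The portions of the orbits lying in the transition layer $\set{T^+<t^+<T^+_1}$ are controlled by admissibility conditions~\ref{item:admissible-s}~and~\ref{item:admissible-t} applied to $g(t^+)\alpha^+$ (guaranteed by \ref{item:admissible-scale}), which force $t^+$ to be monotone on $\set{u\ne0}$ and cross $\set{u=0}$ in only one direction, so that the orbit cannot both exit and re-enter $\set{t^+\ge T^+_1}$ repeatedly while closing up with total action $<a$.
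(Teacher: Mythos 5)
Your construction of $\alpha$ (three pieces: $g(t^+)\alpha^+$ on $\set{t^+\le T^+_1}$, an interpolation on a collar $K$, then $C\alpha^-$) mirrors the paper's, but your verification of property~\ref{item:interpolating-form-action} is a genuinely different route, and as sketched it has gaps.

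The paper does not try to make $\Rb_\alpha$ small; instead it exploits scaling. Writing $G_C$ for the coefficient of $\alpha^+$ (so $\alpha=G_C\alpha^+$), the paper arranges $G_C=C\cdot G_1$ on the whole outer region $\set{t^+\ge T_0}$, and not just near infinity. Thus $\Rb_{G_C\alpha^+}=C^{-1}\Rb_{G_1\alpha^+}$ there, trajectories in $\set{t^+\ge T_0}$ are $C$-independent, and the $G_C\alpha^+$-action of any arc in $\set{t^+\ge T_0}$ is exactly $C$ times its $G_1\alpha^+$-action. Combined with the observation that any new closed orbit must traverse the compact slab $\set{T_0\le t^+\le T_1}$ (or lie in a compact set among the orbits of a form proportional to $\alpha^-$), which forces a $C$-independent minimal action $a_0>0$, choosing $C>a/a_0$ rules out new orbits of action $<a$ in one line. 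Note you need $C_1=C$ (or at least $F$ to be $C$ times a fixed interpolating function) for this scaling structure to hold; your $F=(1-\chi)C_1+\chi Cf$ with $C_1\ne C$ destroys it.

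The concrete gaps in your approach: (a) $\set{t^-\ge T^-}$ is not compact, so ``the compact region $K\cup\set{t^-\ge T^-}$'' is incorrect as written -- you would need a separate argument (admissibility of $\alpha^-$) to show the orbit cannot escape to horizontal infinity, which you never give; (b) ``the portions of the orbits there concentrate at some point $p$'' does not follow from total arc length $\to 0$: the orbit can intersect $K$ in many disjoint short arcs at unrelated locations, and the number of excursions is not bounded; (c) the final sentence about the transition layer gestures at monotonicity of $t^+$ and a single crossing of $\set{u=0}$, but this only constrains the dynamics where $\alpha=g(t^+)\alpha^+$, i.e.\ in $\set{t^+<T^+_1}$, and says nothing about how the orbit behaves once it is inside $K$ (where $F$ is not a function of $t^+$ alone and Definition~\ref{defn:admissible}~\ref{item:admissible-scale} does not apply), so you have not actually excluded an orbit that spends most of its action in $\set{t^+<T^+}$ following $\alpha^+$ and makes a quick loop through $K$. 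None of these is obviously unfixable, but they are the entire content of the estimate, and the paper's scaling argument sidesteps all of them.
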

\begin{proof}
  We may assume without loss of generality that $T^+$ is greater than the $T$ with respect to which $\alpha^+$ is $(X^+,t^+)$-admissible (see Definition~\ref{defn:admissible}).

  Recall that every contact form representing $\xi$ is of the form $G\alpha^+$ for a positive function $G$; so we must find an appropriate $G$.

  Of course, we must take $G=1$ on $\set{t^+<T^+}$ and, no matter our choice of $T^-$ and $C$, we will need to take $G=C\frac{\alpha^-}{\alpha^+}$ on $\set{t^->T^-}$. Moreover, we will want to take $G=C$ on $\set{T_0<t<T_1}$ for some $T^+<T_0<T_1<T^-$. (The reader is encouraged to draw a picture.)

  Now fix $T_1>T_0>T^+$, and fix $T^->T_1$ large enough so that $\set{t^+<T^+}\cap\set{t^->T^-}=\emptyset$, and fix a smooth function $H$ on $\set{t\ge T_1}$ which is equal to $1$ on a neighbourhood of $\set{t^+=T_1}$ and is equal to $\frac{\alpha^-}{\alpha^+}$ on $\set{t^->T^-}$.

  Next, for any $C\ge1$, define $G_C$ on $V$ to be equal to $1$ on $\set{t^+\le T^+}$, equal to $C$ on $\set{T_0\le t^+\le T_1}$, equal to $C\cdot H$ on $\set{t\ge T_1}$, and equal to an arbitrary smooth monotone function of $t$ on $\set{T^+\le t^+\le T^0}$.

  Let us now see that, for $C$ large enough, $G=G_C$ is as required. Conditions \ref{item:interpolating-form-alpha-plus} and \ref{item:interpolating-form-alpha-minus} are immediate, so let us consider \ref{item:interpolating-form-action}.

  By Lemma~\ref{lem:scaling-form}, we know that $G_1\alpha^+$ has the same Reeb orbits as $\alpha^+$ on $\set{t^+\le T_1}$. Hence, any Reeb orbit $\gamma$ of $G_1\alpha^+$ which is not a Reeb orbit of $\alpha^+$ will have to intersect $t^+>T_1$. Let $a_0$ be the infimal action of the restriction of any such orbit to $\set{t^+\ge T_0}$. We claim that $a_0>0$. Indeed, if $\gamma$ is contained entirely inside $\set{t^+\ge T_0}$, this follows from the fact that (by Proposition~\ref{propn:admiss-reeb-orbits}) all of the Reeb of $\alpha^-$ (and hence $G_\alpha$) are contained in a compact set, and otherwise $\gamma$ must traverse $\set{T_0\le t^+\le T_1}$, which requires some minimal action.

  Similarly, we find that the minimal action of any Reeb orbit of $G_C\alpha$ which is not a Reeb orbit of $\alpha$ must be $\ge Ca_0$, so choosing $C>a/a_0$, we are done.
\end{proof}

\section{Defining contact homology}\label{sec:defining-ch}
Having introduced admissible forms and complex structures, and having obtained the needed compactness results, we are now ready to introduce contact homology for convex open contact manifolds, following the procedure in \cite{pardon-contact-homology-and-vfc}.

To do this, we proceed in two steps: first, we show that we can define an invariant of a fixed convex structure, and then we show that these in fact agree for different convex structures.

To explain why this is needed, recall that invariance with respect to different contact forms is obtained via functoriality with respect to symplectic cobordisms. However, on cobordisms between complex structures adapted to different convex structures, we have no way to control the holomorphic curves.

However, when the convex structure is the same at both ends, then we have the required Corollary~\ref{cor:fish-argument-buildings}, which is why we are able to obtain an invariant of a fixed convex structure.

To mediate between different convex structures, we will make use of the interpolating forms from \S\ref{subsec:inter}. Specifically, given $\alpha^+$ and $\alpha^-$ adapted to different convex structures, we will take $\alpha$ which (i) agrees with $\alpha^-$ at infinity, and (ii) agrees with $\alpha^+$ on a large compact set. By (i), we are allowed to compare the contact homology associated to $\alpha$ and $\alpha^-$, as they are admissible to the same convex structure. But now by (ii), using the ``relative'' compactness results of \S\ref{subsubsec:rel-compactness}, we can insure that the contact homology associated to $\alpha$ and $\alpha^+$ agree, \emph{if we restrict to Reeb orbits up to a fixed action}.

Thus, we obtain a morphism from an action-bounded contact homology associated to $\alpha^+$ to the contact homology associated to $\alpha^-$. By a colimiting procedure, we can then get rid of the action bound. It then remains to see that the morphisms thus obtained are canonical -- i.e., independent of the interpolating forms $\alpha$ -- and that they satisfy the required commutativity property needed to obtain a canonical invariant.

\subsection{$\CH$ for a fixed convex structure}\label{subsec:ch-for-fixed-convex}
We will now define a $\CH$ for a contact manifold with a fixed convex structure as the contact homology of any form (and complex structure) admissible with respect to it, and show that it is functorial with respect to cobordisms between manifolds with matching convex structures (and in particular, that it is independent of the chosen form).

This will entail precisely repeating the construction from \cite[\S1.7]{pardon-contact-homology-and-vfc}, and indicating where the necessary changes need to be made.

Then, in \S\ref{subsec:ch-for-open}, we will show that it is in fact independent up to isomorphism of the chosen convex structure, hence giving an invariant of the contact structure alone.

\subsubsection{The Setups}\label{subsubsec:ch-conv-setups}
To begin with, let us describe the relevant versions of Setup~I-IV as in \cite[\S1.6]{pardon-contact-homology-and-vfc}:
\\

\begin{setup}\label{setup:1}
consists of a contact manifold $V$ equipped with a convex structure $(X,t)$ and an admissible pair $(\alpha,J)$.
\end{setup}

\begin{setup}\label{setup:2}
consists of an exact symplectic cobordism $(\hat W,\dv\hat\lambda)$ from $(V^+,\xi^+,X^+,t^+)$ to $(V^-,\xi^-,X^-,t^-)$ relative to some $i\colon\Op_{V^+}(\infty)\to\Op_{V^-}(\infty)$, together with admissible pairs $(\alpha^\pm,J^\pm)$ on $V^\pm$, and an admissible almost complex structure $\hat J$ on $\hat W$.
\end{setup}

\begin{setup}\label{setup:3}
consists of a one-parameter family of exact symplectic cobordisms $(\hat W,\set{\dv\hat\lambda^\tau}_{\tau\in[0,1]})$ with positive/negative ends $(V^\pm,\xi^\pm,X^\pm,t^\pm)$ (relative to some fixed $i$), together with admissible pairs $(\alpha^\pm,J^\pm)$ on $V^\pm$, and an admissible family of almost complex structures $\hat J^\tau$ on $\hat W$.
\end{setup}

\begin{setup}\label{setup:4}
  consists of a one-parameter family of exact symplectic cobordisms $(\hat W^{02,\tau},\dv\hat\lambda^{02,\tau})_{\tau\in{}[0,\infty)}$ from $(V^0,\xi^0,X^0,t^0)$ to $(V^2,\xi^2,X^2,t^2)$ (relative to some fixed $i$), which for sufficiently large $\tau$ agrees with the $\tau$-gluing of a symplectic cobordism $(\hat W^{01},\dv\hat\lambda^{01})$ from $V^0$ to some $(V^1,\xi^1,X^1,t^1)$ and a symplectic cobordism $(\hat W^{12},\dv\hat\lambda^{12})$ from $V^1$ to $V^2$, together with admissible pairs $(\alpha^k,J^k)$ on $V^k$ ($k=0,1,2$) and admissible almost complex structures $\hat J^{\ell(\ell+1)}$ ($\ell=0,1$), and an admissible family of almost complex structures $\hat J^{02,\tau}$ (agreeing with the one induced from $\hat J^{01}$ and $\hat J^{02}$ for large $\tau$).
\end{setup}
Note that each instance of our Setups is an instance of the corresponding Setup in \cite{pardon-contact-homology-and-vfc} (except that our contact manifolds
are not compact) and hence, we obtain the moduli spaces $\Mbar_{\rI}(T)$, $\Mbar_{\rII}(T)$, $\Mbar_{\rIII}(T)$, and $\Mbar_{\rIV}(T)$ as in \cite[\S2.3]{pardon-contact-homology-and-vfc}. Moreover, by Corollary~\ref{cor:fish-argument-buildings},
each of these moduli spaces is compact.

As this was the only use of the compactness of the contact manifolds considered in \cite{pardon-contact-homology-and-vfc}, the main Theorem~1.1 of \textit{op. cit.} goes through with the present definition of the four setups, giving us the sets $\Theta$ corresponding to any instance of one of these setups, and virtual moduli counts $\Mbar^\vir_\theta$ for $\theta\in\Theta$.

\subsubsection{Defining the invariant}\label{subsubsec:defining-CH-conv}
With this in place, we now consider the constructions (1.23)-(1.27) from \cite[\S1.7]{pardon-contact-homology-and-vfc}. These go through unchanged to give us the following objects, given the data from Setups~\ref{setup:1}-\ref{setup:4}.

Given data as in Setup~\ref{setup:1}, we obtain a supercommutative unital $\Q$-superalgebra
\[
  \CH_\bullet(V,\xi;X,t)_{\alpha,J,\theta}
\]
for each $\theta \in \Theta_{\rI}(V,\alpha,J)$, which we may also denote by $\CH_\bullet(V,\xi)_{\alpha,J,\theta}$. (We also obtain a dg-superalgebra from which the above is obtained by passing to homology, and which we denote using $\CC_\bullet$ instead of $\CH_\bullet$.)

Given data as in Setup~\ref{setup:2}, we obtain a graded $\Q$-algebra map
\begin{equation}\label{eq:cobordism-map}
\CH_\bullet(V^+,\xi^+;X^+,t^+)_{\alpha^+,J^+,\theta^+}\tox{\Phi(\hat W,\dv\hat\lambda)_{\hat J,\theta}}\CH_\bullet(V^-,\xi^-;X^-,t^-)_{\alpha^-,J^-,\theta^-}
\end{equation}
for any $\theta \in \Theta_{\rII}(\hat W,\dv\hat\lambda,\hat J)$ mapping to $\theta^\pm \in \Theta_{\rI}(V^\pm, \alpha^\pm, J^\pm)$.

Whenever we have data as in Setup~\ref{setup:3}, we can conclude that the following two maps coincide:
\begin{equation}\label{eq:ch-morphism-homotopy}
  \begin{tikzcd}[column sep = large]
    \CH_\bullet(V^+,\xi^+;X^+,t^+)_{\alpha^+,J^+,\theta^+}\ar[shift left=0.6ex]{r}{\Phi(\hat W,\dv\hat\lambda^0)_{\hat J^0,\theta^0}}\ar[shift left=-0.6ex]{r}[swap]{\Phi(\hat W,\dv\hat\lambda^1)_{\hat J^1,\theta^1}}&\CH_\bullet(V^-,\xi^-;X^-,t^-)_{\alpha^-,J^-,\theta^-}.
  \end{tikzcd}
\end{equation}
By Lemma~\ref{lem:acs-family-cobordisms}, any pair of admissible almost complex structures $(\hat J^0, \hat J^1)$ on a given exact symplectic cobordism $(\hat W, \dv\hat\lambda)$ can be extended to an admissible family $\{\hat J^\tau\}_{\tau \in [0,1]}$. It follows that the required data for Setup~\ref{setup:3} can always be obtained, and hence that the map \eqref{eq:cobordism-map} is independent of $\hat J$ and $\theta$ and can therefore be written as
\[
  \CH_\bullet(V^+,\xi^+;X^+,t^+)_{\alpha^+,J^+,\theta^+}\tox{\Phi(\hat W,\dv\hat\lambda)}\CH_\bullet(V^-,\xi^-;X^-,t^-)_{\alpha^-,J^-,\theta^-}.
\]

Finally, given data as in Setup~\ref{setup:4}, we obtain a commutative diagram
\begin{equation}\label{eq:ch-triangle}
  \begin{tikzcd}[row sep=30pt, column sep=-10pt]
    &\CH_\bullet(V^1,\xi^1;X^1,t^1)_{\alpha^1,J^1,\theta^1}\ar{rd}{\Phi(\hat W^{12},\dv\hat\lambda^{12})}\\
    \CH_\bullet(V^0,\xi^0;X^0,t^0)_{\alpha^0,J^0,\theta^0}\ar{rr}[swap]{\Phi(\hat W^{02,0},\dv\hat\lambda^{02,0})}\ar{ru}{\Phi(\hat W^{01},\dv\hat\lambda^{01})}&&\CH_\bullet(V^2,\xi^2;X^2,t^2)_{\alpha^2,J^2,\theta^2}.
  \end{tikzcd}
\end{equation}

Now, given any two contact forms $\alpha^+$ and $\alpha^-$ on a contact manifold $(V,\xi)$, we can consider the trivial cobordism $\s V$ from $(V,\alpha^+)$ to $(V,\alpha^-)$. If $\alpha^+$ and $\alpha^-$ are both admissible with respect to the same convex structure $(X,t)$ on $V$, then there exists an admissible almost complex structure on the trivial cobordism by Lemma~\ref{lem:acs-trivial-cobordism}. It follows that \cite[Lemma~1.2]{pardon-contact-homology-and-vfc} still holds in our context (the proof goes through unchanged), and we conclude that the objects $\CH_\bullet(V,\xi;X,t)_{\alpha,J,\theta}$ are canonically isomorphic for different choices of $(\alpha,J,\theta)$; we thus obtain a well-defined object only depending on the contact structure $\xi$ and the convex structure $(X,t)$, which we could thus denote by $\CH_\bullet(V,\xi;X,t)$.
However, we do not want to want to dwell on this since below (in \S\ref{subsec:ch-for-open}), we will show that it is in fact also independent of $(X,t)$.

\begin{rmk}
  Given any deformation class of symplectic cobordisms from $(V^+,\xi^+;X^+,t^+)$ to $(V^-,\xi^-;X^-,t^-)$, we obtain a morphism $\CH_\bullet(V,\xi;X^+,t^+)\to\CH_\bullet(V^-,\xi^-;X^-,t^-)$. Moreover, these are functorial with respect to gluings of cobordisms, and hence we obtain a (symmetric monoidal) functor on a certain symplectic cobordism category, which we will not need or say anything more about.
\end{rmk}

\begin{rmk}\label{rmk:ch-details}
  In what follows, we will need to use some details about the actual construction of $\CH_\bullet$ and the induced morphisms $\Phi$, about which we have said nothing so far -- most importantly, that $\CC_\bullet(V,\xi)_{\alpha,J,\theta}$ is the free supercommutative $\Q$-superalgebra generated by the ``good'' Reeb orbits of $\alpha$, and that the differential is defined in terms of certain (virtual!) counts of $\hat J$-holomorphic buildings in $\s V$. Similarly, the morphism $\Phi(\hat W,d\hat\lambda)_{\hat J,\theta}$ is defined using virtual counts of $\hat J$-holomorphic buildings in $\hat W$. We refer to \cite[\S\S1.2-1.5]{pardon-contact-homology-and-vfc} for the details.
\end{rmk}

\subsubsection{Action bounded version}\label{subsubsec:ch-conv-action-bounded}
Next, we define ``action-bounded'' versions of this invariant. For any fixed $V,\xi,X,t,\alpha$, we can repeat the above construction of $\CH_\bullet(V,\xi)_{\alpha,J,\theta}$ using only Reeb orbits of $\alpha$ with action $<a$ to obtain dg-algebras $\CC_\bullet^{<a}(V,\xi)_{\alpha,J,\theta}$ and homology algebras $\CH_\bullet^{<a}(V,\xi)_{\alpha,J,\theta}$. That this is well-defined is due to the fact that the holomorphic curves defining the differential live in the trivial cobordism $\s V$, on which the form $\dv\alpha$ is defined and positive on holomorphic curves, and hence the differential lowers the action by Stokes' theorem.

The complexes $\CC^{<a}_\bullet(V,\xi)_{\alpha,J,\theta}$ give a filtration of $\CC_\bullet(V,\xi)_{\alpha,J,\theta}$ by subcomplexes, and hence we have (since directed colimits commute with taking homology) that $\CH_\bullet(V,\xi)_{\alpha,J,\theta}$ is the colimit of the $\CH^{<a}_\bullet(V,\xi)_{\alpha,J,\theta}$.

Next, suppose we are given two $(X,t)$-admissible pairs $(\alpha^\pm,J^\pm)$. Then by Remark~\ref{rmk:admiss-rel-bounded}, the function $\alpha^-/\alpha^+$ is bounded. Fix $a>0$ and $b\ge\sup(\frac{\alpha^-}{\alpha^+})a$. We claim that the morphism $\Phi(\s V,\hat\omega)\colon\CH_{\bullet}(V,\xi)_{\alpha^+,J^+,\theta^+}\to\CH_{\bullet}(V,\xi)_{\alpha^-,J^-,\theta^-}$ (for any choice of $\theta^\pm)$ restricts to a morphism
\[
  \CH_\bullet^{<a}(V,\xi)_{\alpha^+,J^+,\theta^+}
  \tox{\Phi(\s V,\hat\omega)}
  \CH_\bullet^{<b}(V,\xi)_{\alpha^-,J^-,\theta^-},
\]
i.e., given a Reeb orbit $\gamma^+$ of action $<a$, seen as a generator of $\CH_\bullet^{<a}(V,\xi)_{\alpha^+,J^+,\theta^+}$, any Reeb orbit appearing in its image under $\Phi(\s V,\hat\omega)$ must have action $<b$.

Indeed, letting $r_\pm$ be the symplectization coordinates on $\s V$ associated to $\alpha^\pm$, if we choose any $M_\pm\in\R$ with $e^{M^++M^-}=\sup(\frac{\alpha^-}{\alpha^+})+\varepsilon$ (for any $\varepsilon>0$), the sets $\set{r_+\ge M_+}$ and $\set{r_-\le-M_-}$ in $\s V$ will be disjoint (recall that $e^{r_+}\alpha^+=e^{r_-}\alpha^-$). We can thus choose the $\hat J$ defining $\Phi(\s V,\hat\omega)=\Phi(\s V,\hat\omega)_{\hat J,\hat\theta}$ (as in Lemma~\ref{lem:acs-trivial-cobordism}) to be equal to $\wh{J^\pm}$ on $\set{\pm r_\pm\ge M_\pm}$. It then follows that we can choose an exact $\hat J$-compatible form $\tilde\omega$ as in Lemma~\ref{lem:interpolating-form} equal to $\dv(e^{\pm M_\pm\pm\varepsilon}\alpha^\pm)$ on $\Op_{\s V}(\pm\infty)$, and hence by applying Stokes' theorem, we have (writing $\Ac$ for the action functional) that $e^{M_+}\Ac(\gamma^+)>e^{-M_-}\Ac(\gamma^-)$ (and hence $\Ac(\gamma^-)<e^{M^++M^-}\Ac(\gamma^+)<\plr{\sup\frac{\alpha^-}{\alpha^+}+\varepsilon}a$) for any Reeb orbits $\gamma^\pm$ of $\alpha^\pm$ such that there is a $\hat J$-holomorphic building in $\s V$ with a single positive puncture limiting to $\gamma^+$, and with one of its negative punctures limiting to $\gamma^-$.\footnote{Here, we are using that the coefficients appearing in the homomorphism $\Phi(\s V,\hat\omega)$ are ``counts'' of holomorphic buildings connecting the given Reeb orbits; although in this case these are \emph{virtual} counts, it is still the case that if a certain coefficient is non-zero, then there must be at least one such holomorphic building; see \cite[Theorem~1.1~(iv)]{pardon-contact-homology-and-vfc}. Similar comments apply to the definition of $\CH_\bullet^{<a}$ above.}

Similarly, given two such choices of $\hat J$, we can choose an interpolating family $(\hat J^\tau)_{\tau\in[0,1]}$ (as in Lemma~\ref{lem:acs-family-cobordisms}) so that each $\hat J^\tau$ has the same property, and hence the chain homotopy giving the equality in (\ref{eq:ch-morphism-homotopy}) will again land in $\CC_\bullet^{<b}(V,\xi)_{\alpha^-,J^-,\theta^-}$, so that $\Phi(\s V,\hat\omega)$ does not depend on $\hat J$.

For the same reason, we have that the action-bounded version of the triangle (\ref{eq:ch-triangle}) commutes.

\subsubsection{Comparing $\CH^{<a}$ with different data}\label{subsubsec:comparing-cha}
Recall that in \S\ref{subsec:inter}, we consider pairs of contact forms which agree on a large set containing all of their orbits of action $<a$. We now want to show that for such forms, their action-bounded contact homology agrees.

Hence, consider two convex structures $(X^\pm,t^\pm)$ on $(V,\xi)$ and admissible pairs $(\alpha^\pm,J^\pm)$ with respect to these. Now fix some $a>0$ and some $T>0$, and suppose that $\set{t^+<T}\subset V$ contains all the Reeb orbits of $\alpha^\pm$ of action $<a$, and that $(\alpha^+,J^+)$ and $(\alpha^-,J^-)$ agree on $\set{t^+<T}$.

The two algebras $\CC_\bullet^{<a}(V,\xi)_{\alpha^+}$ and $\CC_\bullet^{<a}(V,\xi)_{\alpha^-}$ (recall that it is only the differential that depends on $J,\theta$) are then tautologically isomorphic, as they are generated by the same Reeb orbits; in fact, they are practically identical (except possibly for some set-theoretic implementation details) and we correspondingly write $\id\colon\CC_\bullet^{<a}(V,\xi)_{\alpha^+}\to\CC_\bullet^{<a}(V,\xi)_{\alpha^-}$ for this isomorphism, and for the induced isomorphism $\id\colon\CH_\bullet^{<a}(V,\xi)_{\alpha^+}\to\CH_\bullet^{<a}(V,\xi)_{\alpha^-}$.

Now, given complex structures $J^\pm$, we would like to conclude that the resulting differentials also agree, assuming -- as in the conclusion of Proposition~\ref{propn:compactness-sympl-rel} -- that $J^\pm$ agree on a large enough set so that all of the $\wh{J^+}$- and $\wh{J^-}$-holomorphic curves used in defining the differential agree.

We do indeed have such a result, though it is complicated somewhat by the ``virtual methods'' needed to define the moduli counts.

Recall the sets $\Theta$ introduced in \S\ref{subsubsec:ch-conv-setups}.
\begin{propn}\label{propn:action-bounded-complexes-agree}
  Fix $a>0$. Let $(\alpha^\pm,J^\pm)$ be $(X^\pm,t^\pm)$-admissible pairs on $(V,\xi)$, and suppose that they agree on some open set $U\subset V$ which contains all of the Reeb orbits of action $<a$ of $\alpha^\pm$, and such that all relevant $\widehat{J^\pm}$-holomorphic buildings in $\s V$ with positive end of action $<a$ are contained in $\s U\subset\s V$.

  Then for each $\theta^+\in\Theta_\rI(V,\alpha^+,J^+)$, there exists $\theta^-\in\Theta_\rI(V,\alpha^-,J^-)$ such that the map $\id\colon\CC_\bullet^{<a}(V,\xi)_{\alpha^+,J^+,\theta^+}\to\CC_\bullet^{<a}(V,\xi)_{\alpha^-,J^-,\theta^-}$ defined above is an isomorphism of complexes -- i.e., preserves the differential.
\end{propn}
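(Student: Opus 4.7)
The plan is to exploit the fact that under the given hypotheses the moduli spaces of holomorphic buildings used to compute the differential in the action-bounded complex are literally the \emph{same} for $(\alpha^+,J^+)$ and $(\alpha^-,J^-)$, so that the chain-level differentials will coincide once the perturbation data are matched. Concretely, the proof breaks into three steps: (a) identification of generators, (b) identification of moduli spaces, and (c) matching of perturbation data.

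For (a), the good Reeb orbits of $\alpha^\pm$ of action $<a$ coincide as subsets of $V$, since $\alpha^+=\alpha^-$ on $U$ and all such orbits lie in $U$ by hypothesis; the linearized Reeb flow along each such orbit is also the same for $\alpha^+$ and $\alpha^-$, so being ``good'' means the same thing on both sides. This is what makes the tautological identification $\id$ a well-defined map of underlying graded $\Q$-superalgebras. For (b), the differential on $\CC_\bullet^{<a}$ is defined (see Remark~\ref{rmk:ch-details}) by virtual counts of relevant $\wh{J^\pm}$-holomorphic buildings in $\s V$ with positive end of action $<a$; by hypothesis these are all contained in $\s U$, where $\wh{J^+}=\wh{J^-}$, so the compactified moduli spaces, together with all of their stratified structure, are identical for $J^+$ and $J^-$.

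The substance of the proof is step (c): we must choose $\theta^-\in\Theta_\rI(V,\alpha^-,J^-)$ so that the perturbations it induces on the shared moduli spaces of (b) agree with those induced by $\theta^+$, after which the virtual counts defining the two differentials agree term by term. In the formalism of \cite[\S\S1.2-1.5]{pardon-contact-homology-and-vfc}, $\theta$ is an inductive, coherent choice of perturbation data on a system of moduli spaces, with later choices constrained by earlier ones only through boundary-coherence conditions at degenerations. The collection of moduli spaces of relevant buildings with positive end of action $<a$ and image in $\s U$ is closed under breaking: each level of such a broken building is again a relevant building of the same type, with positive end of action at most $a$ and image still in $\s U$. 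Hence the perturbations assigned by $\theta^+$ to this sub-system constitute valid input data for the inductive construction of $\theta^-$, and they can be extended freely to the remaining moduli spaces for $(\alpha^-,J^-)$.

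The main obstacle is the bookkeeping in step (c): one has to verify, against Pardon's explicit definition of $\Theta$ via implicit atlases and coherent boundary identifications, that ``restricting $\theta^+$ to the shared sub-system'' is both well-defined on the $(\alpha^-,J^-)$ side and genuinely extends to an element of $\Theta_\rI(V,\alpha^-,J^-)$. Since the sub-system is closed under all degenerations appearing in the coherence conditions, no actual obstruction arises, but care is required to confirm that the implicit-atlas data attached to these moduli spaces are transported correctly between the two settings; once this is done, the differentials on the action-bounded complexes agree tautologically and $\id$ is an isomorphism of complexes.
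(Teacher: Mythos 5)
Your proposal is correct and takes essentially the same approach as the paper: identify that the $a$-bounded generators and moduli spaces coincide, then define $\theta^-$ to agree with $\theta^+$ on the shared part and extend by Pardon's inductive construction. The one point the paper makes explicit that you leave as ``bookkeeping to be verified'' is the precise reason this extension works: the $a$-bounded decorated trees form an \emph{initial} subset of the partial order governing the induction in \cite[Proposition~4.34]{pardon-contact-homology-and-vfc} (your ``closed under breaking'' is the Stokes-theorem observation that feeds into this, but initiality with respect to the inductive ordering is the property actually used), together with the fact that the implicit atlases at $a$-bounded trees -- being specified by thickening data depending only on $(V,\xi)$ plus thickened moduli spaces lying in $\s U$ -- coincide for the two sides.
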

\begin{proof}
  The differentials on each side are given in terms of certain virtual moduli counts $\#\Mbar^\vir_{\theta^+}$ and $\#\Mbar^\vir_{\theta^-}$ (see \cite[\S1]{pardon-contact-homology-and-vfc}). Hence, we just need to show that, given $\theta^+$, we can produce $\theta^-$ for which these moduli counts are equal for all the moduli spaces of holomorphic curves connecting the Reeb orbits of action $<a$. This follows readily from the proof in \cite[Proposition~4.34]{pardon-contact-homology-and-vfc} that the sets $\Theta_{\rI}$ are non-empty. In order to explain this, we need to recall some of the set-up from there.

  There is a category $\SSS_\rI=\SSS(V,\alpha)$ (with some extra structure), the objects of which are certain decorated trees $T$ indexing the different relevant moduli spaces of holomorphic buildings in $\s V$. There is a notion of \emph{module} over $\SSS_\rI$ and of \emph{morphism} of such modules, and the elements of the set $\Theta_\rI(V,\alpha,J)$ are morphisms of certain $\SSS_\rI$-modules associated to the data $(V,\alpha,J)$. Constructing such a morphism involves associating a certain datum to each $T\in\SSS_\rI$, and the proof in \emph{loc. cit.} that there exists a $\theta\in\Theta_\rI$ carries out this construction by an \emph{induction} on $T$.

  Now, the virtual moduli counts $\#\Mbar^\vir_\theta$ defined by $\theta$ are such that the count for the moduli space $\Mbar(T)$ indexed by $T$ is determined by the datum associated to $T$ by $\theta$. In particular, the moduli counts needed to construct $\CH_\bullet^{<a}$ are determined by the data associated to those trees $T\in\SSS_{\rI}$ that are decorated by Reeb orbits of action $<a$ (let us call such $T$ \emph{$a$-bounded}). Now there are three facts about such $T$ which will imply the desired conclusion. Let us write $\SSS_\rI^\pm$ for $\SSS(V,\alpha^\pm)$.

  The first fact is that the $\SSS_\rI^+$ and $\SSS_\rI^-$ have the same $a$-bounded objects (since $\alpha^+$ and $\alpha^-$ have the same Reeb orbits of action $<a$).

  The second fact is that the values of the relevant $\SSS_\rI^+$- and $\SSS_\rI^-$-modules on the $a$-bounded objects are the same. Briefly, the values of these modules at $T$ are given in terms of the so-called \emph{implicit atlases} associated to $T$ and to the objects $T'$ over $T$ (i.e., admitting a morphism $T'\to T$). Now, when $T$ is $a$-bounded, so is $T'$ (this is a consequence of Stokes' theorem applied to $\dv\alpha^\pm$), and the implicit atlases associated to $a$-bounded $T$ will be the same for $(V,\alpha^+,J^+)$ and $(V,\alpha^-,J^-)$: the implicit atlas is specified by a set of ``thickening data'' which only depends on $(V,\xi)$, and by ``thickened moduli spaces'', which are moduli spaces of genus 0 holomorphic buildings connecting the Reeb orbits decorating $T$, and hence for $a$-bounded $T$, will be the same for $(\alpha^+,J^+)$ and $(\alpha^-,J^-)$ by our assumption on $J^\pm$.

  The upshot of the first two facts is that it makes sense to talk about a $\theta^+\in\Theta_\rI(V,\alpha^+,J^+)$ and $\theta^-\in\Theta_\rI(V,\alpha^-,J^-)$ associating the \emph{same} data to those $T$ relevant for the definition of $\CH_\bullet^{<a}(V,\xi)_{\alpha^+,J^+,\theta^+}$ and $\CH_\bullet^{<a}(V,\xi)_{\alpha^-,J^-,\theta^-}$ and if this is done, the resulting virtual moduli counts will be the same, and hence the differentials will agree.

  The third fact is that the set of $a$-bounded objects is \emph{initial} in the partial order with respect to which the induction in \emph{loc. cit.} is performed (this follows directly from the definition of the partial order, and from the application of Stokes' theorem mentioned above).

  Hence, given $\theta^+\in\Theta_\rI(V,\alpha^+,J^+)$, we can simply define $\theta^-\in\Theta_\rI(V,\alpha^-,J^-)$ to be equal to $\theta^+$ on the $a$-bounded objects, and then proceed with the induction from there as in \emph{loc. cit}.
\end{proof}

Now, continuing in the above situation, suppose we have a second pair of $(X^\pm,t^\pm)$-admissible pairs $(\tilde\alpha^\pm,\tilde J^\pm)$ satisfying the same hypotheses (say, with respect to the same $U\subset V$), and choose $\theta^\pm\in\Theta_\rI(V,\alpha^\pm,J^\pm)$ and $\tilde\theta^\pm\in\Theta_\rI(V,\tilde\alpha^\pm,\tilde J^\pm)$ so that the two corresponding morphisms $\id$ are isomorphisms of complexes.

We now similarly want to show that, under appropriate conditions, the morphisms $\Phi(\s V,\hat{\omega})\colon\CH^{<a}_\bullet(V,\xi)_{\alpha^+,J^+,\theta^+}\to\CH^{<b}_\bullet(V,\xi)_{\tilde \alpha^+,\tilde J^+,\tilde \theta^+}$ and $\Phi(\s V,\hat{\omega})\colon\CH^{<a}_\bullet(V,\xi)_{\alpha^-,J^-,\theta^-}\to\CH^{<b}_\bullet(V,\xi)_{\tilde \alpha^-,\tilde J^-,\tilde \theta^-}$ of \S\ref{subsubsec:ch-conv-action-bounded} agree (with respect to the identifications $\id$ on both sides). We have:
\begin{propn}\label{propn:action-bounded-morphisms-agree}
  Fix $a>0$. Let $(X^\pm,t^\pm)$ be convex structures on $(V,\xi)$. Let $(\alpha^+,J^+)$ and $(\tilde\alpha^+,\tilde J^+)$ be $(X^+,t^+)$-admissible pairs, and let $(\alpha^-,J^-)$ and $(\tilde\alpha^-,\tilde J^-)$ be $(X^-,t^-)$-admissible pairs, and suppose that there is some open $U\subset V$ such that $(\alpha^\pm,J^\pm)$ and $(\tilde\alpha^\pm,\tilde J^\pm)$ each satisfy the hypotheses of Proposition~\ref{propn:action-bounded-complexes-agree} with respect to $U$ and $a$, and with respect to $U$ and some $b\ge\max(\sup(\frac{\tilde\alpha^+}{\alpha^+})a,\sup(\frac{\tilde\alpha^-}{\alpha^-})a)$, respectively. Fix $\theta^\pm$ and $\tilde\theta^\pm$ so that the instances of $\id$ in the diagram below are isomorphisms of complexes.

  Next, let $\hat J$ be an almost complex structure on $\s V$ admissible with respect to $(\alpha^+,J^+)$ and $(\tilde \alpha^+,\tilde J^+)$, and $\hat J'$ an almost complex structure on $\s V$ admissible with respect to $(\alpha^-,J^-)$ and $(\tilde \alpha^-,\tilde J^-)$.

  Suppose that $\hat{J}$ and $\hat{J}'$ are equal on $\s U\subset\s V$, and that all of the relevant $\hat{J}$-holomorphic and $\hat{J}'$-holomorphic buildings in $\s V$ with positive end of action $<a$ are contained in $\s U$.

  Then for each $\hat\theta\in\Theta_\rII(\s V,\hat\lambda,\hat J)_{(\theta^+,\theta^-)}$, there exists $\hat\theta'\in\Theta_\rII(\s V,\hat\lambda,\hat J')_{(\tilde\theta^+,\tilde\theta^-)}$ (here, the subscripts indicate that we are restricting to the fibers over the given elements under the maps $\Theta_{\rII}\to\Theta_\rI^+\times\Theta_{\rI}^-$) such that the following square commutes:
  \[
    \begin{tikzcd}
      \CC^{<a}_\bullet(V,\xi)_{\alpha^+,J^+,\theta^+}\ar[r, "\id"]
      \ar[d, "{\Phi(\s V,\hat\omega)_{\hat J,\hat\theta}}"]&
      \CC^{<a}_\bullet(V,\xi)_{\alpha^-,J^-,\theta^-}
      \ar[d, "{\Phi(\s V,\hat\omega)_{\hat J',\hat\theta'}}"]\\
      \CC^{<b}_\bullet(V,\xi)_{\tilde\alpha^+,\tilde J^+,\tilde\theta^+}\ar[r, "\id"]&
      \CC^{<b}_\bullet(V,\xi)_{\tilde\alpha^-,\tilde J^-,\tilde\theta^-}
    \end{tikzcd}
  \]
\end{propn}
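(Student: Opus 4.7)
The plan is to follow the strategy of Proposition~\ref{propn:action-bounded-complexes-agree}, now applied to Setup~\ref{setup:2} (cobordisms) rather than Setup~\ref{setup:1} (symplectizations). The morphism $\Phi(\s V,\hat\omega)_{\hat J,\hat\theta}$ is built from virtual counts $\#\Mbar^\vir_{\hat\theta}(T)$ indexed by trees $T\in\SSS_\rII$ decorated by Reeb orbits at both ends. To establish commutativity of the square, I will exhibit $\hat\theta'$ such that, for every \emph{$a$-bounded tree} $T$ (i.e., one whose positive puncture is decorated by a Reeb orbit of action $<a$), the virtual counts $\#\Mbar^\vir_{\hat\theta}(T)$ and $\#\Mbar^\vir_{\hat\theta'}(T)$ coincide under the tautological identifications $\id$ at both ends.

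First, I would apply Stokes' theorem to an exact interpolating form $\tilde\omega=\dv\tilde\lambda$ on $\s V$ as produced by Lemma~\ref{lem:interpolating-form} to deduce that every $a$-bounded tree has all of its negative punctures decorated by Reeb orbits of action $\le\sup(\tilde\alpha^+/\alpha^+)\cdot a\le b$ (and similarly for $\hat J'$). Two consequences follow: the $a$-bounded objects of $\SSS_\rII^{\hat J}$ and $\SSS_\rII^{\hat J'}$ are canonically identified via $\id$, since the Reeb orbits of action $<a$ of $\alpha^+$ and $\alpha^-$ agree on $U$ and those of action $<b$ of $\tilde\alpha^+$ and $\tilde\alpha^-$ agree on $U$; and the $a$-bounded objects form an initial segment of the partial order on $\SSS_\rII$ driving the induction in \cite[Proposition~4.34]{pardon-contact-homology-and-vfc}.

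Next, I would verify that the relevant $\SSS_\rII$-modules agree on $a$-bounded objects. Their values are determined by implicit atlases whose thickening data depend only on the underlying manifold $\s V$, and whose thickened moduli spaces consist of genus-zero $\hat J$- or $\hat J'$-holomorphic buildings connecting the prescribed Reeb orbits. By hypothesis, all such relevant buildings are contained in $\s U$, where $\hat J=\hat J'$, so the thickened moduli spaces coincide. Moreover, the boundary data of $\hat\theta$ and $\hat\theta'$ (at the positive and negative ends of the cobordism) already agree on $a$-bounded objects by our choice of $\theta^\pm$ and $\tilde\theta^\pm$ via Proposition~\ref{propn:action-bounded-complexes-agree}, which ensures that $\theta^+$ agrees with $\theta^-$ on the $a$-bounded part and $\tilde\theta^+$ with $\tilde\theta^-$ on the $b$-bounded part.

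With these ingredients in hand, I would define $\hat\theta'$ on the $a$-bounded objects by transporting $\hat\theta$ across the canonical identifications above, and then extend it by resuming the inductive construction of \cite[Proposition~4.34]{pardon-contact-homology-and-vfc} on the remaining objects. The virtual counts on $a$-bounded moduli spaces then agree by construction, yielding the desired commutative square. The main obstacle will be the bookkeeping in this last step: verifying that the restriction of $\hat\theta$ to the initial segment of $a$-bounded trees is a genuine partial $\SSS_\rII$-module morphism compatible with the prescribed boundary data of $\hat\theta'$, and hence admits an extension. This amounts to an essentially formal unwinding of the definitions, as in Proposition~\ref{propn:action-bounded-complexes-agree}, with the added subtlety of simultaneously tracking boundary data at two ends rather than zero.
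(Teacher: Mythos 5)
Your proposal is correct and takes essentially the same approach as the paper, which treats this proposition as a near-verbatim re-run of Proposition~\ref{propn:action-bounded-complexes-agree} with $\SSS_\rI$ replaced by $\SSS_\rII$, ``$a$-bounded'' now meaning that the incoming edge is labelled by a Reeb orbit of action $<a$, and the inductive construction from \cite[Proposition~4.34]{pardon-contact-homology-and-vfc} restarted on the complement of the initial segment. Your explicit invocation of Stokes' theorem with the interpolating form of Lemma~\ref{lem:interpolating-form} (to ensure negative punctures have action $<b$) and your honest flag about the bookkeeping needed to check that the transported data is a bona fide partial module morphism with the correct boundary values are both points the paper leaves implicit; they are accurate and in the spirit of the argument.
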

\begin{proof}
  This is proven in exactly the same way as Proposition~\ref{propn:action-bounded-complexes-agree}.

  Again, the elements of the sets $\Theta_\rII$ consist of certain morphisms of $\SSS_\rII$-modules, which assign a certain datum to each decorated tree $T$. Again, we are only concerned with the ``$a$-bounded'' objects $T$, which now means those trees such that the ``incoming'' edge is decorated with a Reeb orbit of action $<a$. Again, we have that the subcatgeories of $\SSS_\rII(\s V,\alpha^\pm,\hat\lambda)$ and $\SSS_\rII(\s V,\tilde\alpha^\pm,\hat\lambda)$ on such $T$ agree, as do the values of the relevant $\SSS_\rII$-modules on these subcategories.

  And again, we have that these $T$ are initial with respect to the relevant partial order, so we can simply define $\hat\theta'$ to be equal to $\hat\theta$ on the $a$-bounded objects, and then continue the induction as in the proof of \cite[Proposition~4.34]{pardon-contact-homology-and-vfc}.
\end{proof}

\subsection{$\CH$ for convex manifolds}\label{subsec:ch-for-open}
Finally, we explain how to compare the invariants $\CH_\bullet(V,\xi;X,t)$ for different convex structures $(X,t)$, and hence show that they depend up to isomorphism only on $(V,\xi)$.

Our first task is, given convex structures $(X^\pm,t^\pm)$, admissible pairs $(\alpha^\pm,J^\pm)$, and $\theta^\pm\in\Theta_{\rI}(V,\alpha^\pm,J^\pm)$, to define a morphism
\begin{equation}\label{eq:xplus-xminus-map}
  \Phi\colon\CH_\bullet(V,\xi;X^+,t^+)_{\alpha^+,J^+,\theta^+}\to
  \CH_\bullet(V,\xi;X^-,t^-)_{\alpha^-,J^-,\theta^-}.
\end{equation}

In the following discussion, $V$ and $\xi$ will always be fixed, and for conciseness, we will just write $\sch{\alpha,J,\theta}$ and $\sch{\alpha,J,\theta}^{<a}$ in place of $\CH_\bullet(V,\xi)_{\alpha,J,\theta}$ and $\CH_\bullet^{<a}(V,\xi)_{\alpha,J,\theta}$, respectively.

To define the morphism \eqref{eq:xplus-xminus-map}, we will first define morphisms
\[
  \Phi_a\colon\sch{\alpha^+,J^+,\theta^+}^{<a}\to\sch{\alpha^-,J^-,\theta^-}
\]
and then show that these induce a map out of the colimit $\sch{\alpha^+,J^+,\theta^+}$ of the $\sch{\alpha^+,J^+,\theta^+}^{<a}$.

\begin{defn}
  A pair $(\alpha_a,J_a)$ consisting of a representing contact structure $\alpha_a$ and a $\dv\alpha_a$-compatible almost complex structure is \defword{$a$-good with respect to $\alpha^+,J^+,t^+,X^+,X^-,t^-$} (or just \defword{$a$-good}) if there exists a $T>0$ such that:
  \begin{enumerate}[(i)]
  \item\label{item:a-good-interp} $(\alpha_a,J_a)$ is $(X^-,t^-)$-compatible and agrees with $(\alpha^+,J^+)$ on $\set{t^+<T}$
  \item\label{item:a-good-orbits} $\alpha$ and $\alpha_a$ have the same Reeb orbits of action $<a$
  \item\label{item:a-good-complex} $T$ is large enough so as to satisfy the conclusion of Proposition~\ref{propn:compactness-sympl-rel} (where we take $(X^+,t^+,\alpha^+,J^+)$ for the $(X,t,\alpha,J)$.
  \item\label{item:a-good-morphism} $T$ is large enough so as to satisfy the conclusion of Proposition~\ref{propn:compactness-cob-rel} (where we take $\hat W=\s V$, and take $M_\pm=0$).
  \item\label{item:a-good-admiss} $T$ is large enough so that $\alpha^+$ is $(X^+,t^+)$-admissible with respect to $T$ (see Definition~\ref{defn:admissible}).
  \end{enumerate}
\end{defn}

\begin{rmk}\enumbelow
  \begin{enumerate}[(i)]
  \item It follows from Proposition~\ref{propn:interpolating-form} that there exist $a$-good triples.
  \item Any $b$-good triple is $a$-good for any $a<b$.
  \item If $(\alpha_a,J_a)$ is an $a$-good pair, then (by properties \ref{item:a-good-interp}-\ref{item:a-good-complex}) the hypotheses of Proposition~\ref{propn:action-bounded-complexes-agree} are satisfied, and hence there exists a $\theta_a$ such that the map $\id\colon\sch{\alpha^+,J^+,\theta^+}^{<a}\to\sch{\alpha_a,J_a,\theta_a}^{<a}$ is an isomorphism of dg-algebras. Let us then call $(\alpha_a,J_a,\theta_a)$ an \defword{$a$-good triple}.
  \end{enumerate}
\end{rmk}

Now, to define $\Phi_a$, we choose any $a$-good triple $(\alpha_a,J_a,\theta_a)$, and we then take $\Phi_a$ to be the composite
\[
  \sch{\alpha^+,J^+,\theta^+}^{<a}\tox{\id}
  \sch{\alpha_a,J_a,\theta_a}^{<a}\tox{\Phi(\s V,\hat\omega)}
  \sch{\alpha^-,J^-,\theta^-}.
\]
Here, the second morphism is defined since $(\alpha_a,J_a)$ and $(\alpha^-,J^-)$ are both $(X^-,t^-)$-admissible.

Next, we must show that this morphism is independent of the chosen triple $(\alpha_a,J_a,\theta_a)$. That is, given a second such triple $(\alpha_a',J_a',\theta_a')$, we must show that the outside of the following diagram commutes (ignore the central vertical morphism for now).
\begin{equation}\label{eq:indep-of-a-mor}
  \begin{tikzcd}
    &\sch{\alpha_a,J_a,\theta_a}^{<a}\ar[rd, "{\Phi(\s V,\hat\omega)}"]
    \ar[dd, "{\Phi(\s V,\hat\omega)}"]\\
    \sch{\alpha^+,J^+,\theta^+}^{<a}\ar[ru, "\id"]\ar[rd, "\id"']&&
    \sch{\alpha^-,J^-,\theta^-}
    \\
    &\sch{\alpha_a',J_a',\theta_a'}^{<a}\ar[ru, "{\Phi(\s V,\hat\omega)}"']
  \end{tikzcd}
\end{equation}

\begin{lem}
  For any two $a$-good pairs $(\alpha_a,J_a)$ and $(\alpha_a',J_a')$, there is a third $(\alpha_a'',J_a'')$ such that $\alpha_a''\ge\alpha_a,\alpha_a'$.
\end{lem}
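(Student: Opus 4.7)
The plan is to construct $\alpha_a''$ by adapting the construction in the proof of Proposition~\ref{propn:interpolating-form}, choosing the free constant large enough that the resulting form dominates both input forms pointwise; the almost complex structure $J_a''$ is then obtained by a standard extension argument.

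First, by enlarging the constants $T$ witnessing $a$-goodness for each input pair, fix a common $T^+>0$ such that $\alpha_a=\alpha_a'=\alpha^+$ and $J_a=J_a'=J^+$ on $\set{t^+<T^+}$, and such that $T^+$ satisfies conditions~\ref{item:a-good-complex}--\ref{item:a-good-admiss}. Applying condition~\ref{item:interpolating-form-alpha-minus} of Proposition~\ref{propn:interpolating-form} to each of $\alpha_a$ and $\alpha_a'$ and taking the maximum of the resulting constants, pick a common $T^->0$ and constants $C_a,C_{a'}\ge 1$ such that $\alpha_a=C_a\alpha^-$ and $\alpha_a'=C_{a'}\alpha^-$ on $\set{t^->T^-}$.

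Write $\alpha_a=G_a\alpha^+$ and $\alpha_a'=G_{a'}\alpha^+$ for positive smooth functions $G_a,G_{a'}$. The key observation is that the intermediate region $\set{t^+\ge T^+}\cap\set{t^-\le T^-}$ is \emph{compact}, since $t^-$ is a proper function, and hence $G_a$ and $G_{a'}$ are bounded above on it by some constant $M$. I then run the construction in the proof of Proposition~\ref{propn:interpolating-form} to produce $\alpha_a''=G_{C''}\alpha^+$, where $G_{C''}$ equals $1$ on $\set{t^+\le T^+}$, equals $C''\cdot\alpha^-/\alpha^+$ on $\set{t^-\ge T^-}$, and is a smooth monotone interpolation on the intermediate region. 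For $C''$ taken sufficiently large---in particular large enough that $C''\ge C_a,C_{a'}$, and large enough (relative to the minimum of $\alpha^-/\alpha^+$ on the compact intermediate region) that the monotone interpolation can be chosen $\ge\max(G_a,G_{a'})$ pointwise---we obtain $G_{C''}\ge G_a,G_{a'}$ everywhere, giving the desired dominance $\alpha_a''\ge\alpha_a,\alpha_a'$. As in the proof of Proposition~\ref{propn:interpolating-form}, taking $C''$ even larger if necessary also ensures that $\alpha_a''$ has the same Reeb orbits of action $<a$ as $\alpha^+$, so that $\alpha_a''$ satisfies condition~\ref{item:a-good-orbits} of $a$-goodness.

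Finally, choose $J_a''$ to be any $\dv\alpha_a''$-compatible, $(X^-,t^-)$-admissible almost complex structure on $\xi$ which agrees with $J^+$ on $\set{t^+<T^+}$; such a $J_a''$ exists by the standard ``extend from a closed subset'' argument used in the proof of Lemma~\ref{lem:acs-trivial-cobordism}, and the extension makes sense precisely because $\alpha_a''=\alpha^+$ on $\set{t^+<T^+}$, so $J^+$ is already $\dv\alpha_a''$-compatible there. The main obstacle is verifying that a single smooth function $G_{C''}$ can simultaneously dominate both $G_a$ and $G_{a'}$ pointwise while retaining the structural form required by Proposition~\ref{propn:interpolating-form} (and in particular producing an $(X^-,t^-)$-admissible form); the compactness of the intermediate region, combined with the freedom in the intermediate interpolation, is what makes this construction go through.
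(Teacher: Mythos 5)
Your overall approach matches the paper's: both proofs repeat the construction of $G_C\alpha^+$ from Proposition~\ref{propn:interpolating-form} and choose the constant large enough that the resulting form dominates both inputs, using compactness of the region where $t^+$ is bounded below and $t^-$ is bounded above. You also explicitly construct $J_a''$ and verify $a$-goodness of the resulting pair, which the paper's terse proof leaves implicit.

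However, there is a genuine gap in your second step. You assert that $a$-goodness of $\alpha_a$ yields $T^->0$ and $C_a\ge1$ with $\alpha_a=C_a\alpha^-$ on $\set{t^->T^-}$, citing condition~\ref{item:interpolating-form-alpha-minus} of Proposition~\ref{propn:interpolating-form}. But that condition is a property of the \emph{output} of Proposition~\ref{propn:interpolating-form}, not a consequence of $a$-goodness. The definition of an $a$-good pair only requires $(\alpha_a,J_a)$ to be $(X^-,t^-)$-admissible and to agree with $(\alpha^+,J^+)$ on $\set{t^+<T}$; there is no requirement that $\alpha_a$ be a constant multiple of $\alpha^-$ near infinity, and a generic $(X^-,t^-)$-admissible form will not have this special shape. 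The tool you want instead is Remark~\ref{rmk:admiss-rel-bounded}: since $\alpha_a$, $\alpha_a'$, and $\alpha^-$ are all $(X^-,t^-)$-admissible, the ratios $\frac{\alpha^-}{\alpha_a}$ and $\frac{\alpha^-}{\alpha_a'}$ are bounded above and away from zero; combined with the fact that \emph{your} $\alpha_a''$ equals $C''\alpha^-$ near $t^-$-infinity, this is exactly what lets you take $C''$ large enough there. With that substitution your argument goes through and coincides with the paper's, which invokes Remark~\ref{rmk:admiss-rel-bounded} at precisely this point. (A smaller wording slip: you speak of ``enlarging'' the witnessing constants $T$ in your first step, but condition~\ref{item:a-good-interp} is preserved by \emph{shrinking} $T$, not enlarging it, so the common $T^+$ should be the minimum of the two witnesses.)
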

\begin{proof}
  To construct $\alpha_a''$, we repeat the construction from the proof of Proposition~\ref{propn:interpolating-form}. Referring to the notation there, we can take $T^+$ to be a $T$ as in the definition of $a$-good. Then, by choosing $C$ large enough, we can ensure that $\alpha_a''=G_C\alpha$ is greater than $\alpha_a$ and $\alpha_a'$ on $\set{t^+\ge T_0}$ (since $\frac{\alpha^-}{\alpha_a}$ and $\frac{\alpha^-}{\alpha_a'}$ are bounded by Remark~\ref{rmk:admiss-rel-bounded}).

  Now, $G_C\alpha_a''$ is equal to $\alpha=\alpha_a=\alpha_a'$ on $\set{t^+\le T^+}$ and, finally, we can ensure that it is greater than $\alpha_a$ and $\alpha_a'$ on $\set{T^+\le t^+\le T_0}$ by choosing $H(t^+)$ (again in the notation from the proof of Proposition~\ref{propn:interpolating-form}) to be greater than $\frac{\alpha_a}{\alpha^+}$ and $\frac{\alpha_a'}{\alpha^+}$.
\end{proof}

Because of this lemma, it suffices to show that (\ref{eq:indep-of-a-mor}) commutes when $\alpha_a\ge\alpha_a'$. Now consider the vertical morphism in the diagram. This is defined since $(\alpha_a,J_a)$ and $(\alpha_a',J_a')$ are both $(X^-,t^-)$-admissible, and it indeed lands in the filtered piece $\set{\alpha_a',J_a',\alpha_a'}^{<a}$, as indicated, since $\alpha_a\ge\alpha_a'$ (see \S\ref{subsubsec:ch-conv-action-bounded}).

Now, it is obvious that the left triangle commutes if we take the vertical arrow to be $\id$. Hence, it suffices to show that $\id=\Phi(\s V,\hat\omega)$ (at least on homology).

We now claim that the following square commutes, which we will justify in a moment.
\[
  \begin{tikzcd}
    \sch{\alpha^+,J^+,\theta^+}^{<a}\ar[r, "\id"]
    \ar[d, "{\Phi(\s V,\hat\omega)}"]&
    \sch{\alpha_a,J_a,\theta_a}^{<a}
    \ar[d, "{\Phi(\s V,\hat\omega)}"']\\
    \sch{\alpha^+, J^+,\theta^+}^{<a}\ar[r, "\id"]&
    \sch{\alpha_a',J_a',\theta_a'}^{<a}
  \end{tikzcd}
\]
Hence, it suffices that the left morphism is the same (on homology) as $\id$. But, as in \S\ref{subsec:ch-for-fixed-convex}, the proof of \cite[Lemma~1.2]{pardon-contact-homology-and-vfc} goes through in the present (action-bounded) situation and shows that $\Phi(\s V,\hat\omega)$ is an isomorphism, whence it follows from the commutativity of \eqref{eq:ch-triangle} from \S\ref{subsec:ch-for-fixed-convex} that $\Phi(\s V,\hat\omega)\circ\Phi(\s V,\hat\omega)=\Phi(\s V,\hat\omega)$ and hence that it is equal to the identity.

Now let us see why the above square commutes.

We will apply Proposition~\ref{propn:action-bounded-morphisms-agree}, where we take the $(\alpha^+,J^+,\theta^+)$ and $(\tilde\alpha^+,\tilde J^+,\tilde\theta^+)$ in that proposition to be the present $(\alpha^+,J^+,\theta^+)$, we take $\hat J$ to be $\wh{J^+}$, we take $(\alpha^-,J^-,\theta^-)$ to be $(\alpha_a,J_a,\theta_a)$, and we take $(\tilde\alpha^-,\tilde J^-,\theta^-)=(\alpha_a',J_a',\theta_a')$.

If we can show that the hypotheses of the proposition are fulfilled for some $\hat J'$, then this will imply our claim. Now fix $T>0$ as in the definition of $a$-good, and choose any $\hat J'$ which agrees with $\wh{J^+}$ on $\set{t^+<T}$ and which is equal to $\wh{J^\pm}$ on $\set{\pm r^\pm\ge0}$, where $r^\pm$ are the symplectization coordinates of $\alpha_a$ and $\alpha_a'$ (the second condition is possible to fulfill since $\alpha_a\ge\alpha_a'$, and it is compatible with the first condition by condition \ref{item:a-good-interp} in the definition of $a$-good).

This implies, using condition \ref{item:a-good-morphism} in the definition of $a$-good, that $\hat J'$ satisfies the condition on $\hat K$ in Proposition~\ref{propn:compactness-cob-rel} (applied with $\hat W=\s V$, with the present $(\alpha^\pm,J^\pm)$, with $M^\pm=0$). This in turn implies that the hypotheses of Proposition~\ref{propn:action-bounded-morphisms-agree} are fulfilled, as desired

This concludes the proof that the above square commutes, and hence that the triangle on the right of \eqref{eq:final-triangle-for-ch} commutes.

Thus, we have defined a canonical morphism $\Phi_a\colon\sch{\alpha^+,J^+,\theta^+}^a\to\sch{\alpha^-,J^-,\theta^-}$. Next, to see that these induce a morphism $\Phi\colon\sch{\alpha^+,J^+,\theta^+}\to\sch{\alpha^-,J^-,\theta^-}$, we must show for each $a<b$ that the triangle
\[
  \begin{tikzcd}[row sep=0pt]
    \sch{\alpha^+,J^+,\theta^+}^{<a}\ar[rd, "\Phi_a"]\ar[dd]\\
    &\sch{\alpha^-,J^-,\theta^-}\\
    \sch{\alpha^+,J^+,\theta^+}^{<b}\ar[ru, "\Phi_b"']
  \end{tikzcd}
\]
commutes. However, noting that any $b$-good triple is also $a$-good, this amount to showing that the outside of the following diagram commutes.
\[
  \begin{tikzcd}
    \sch{\alpha^+,J^+,\theta^+}^{<a}\ar[dd]\ar[r, "\id"]&
    \sch{\alpha_b,J_b,\theta_b}^{<a}\ar[rd, "{\Phi(\s V,\hat\omega)}"]\ar[dd]
    \\
    &&\sch{\alpha^-,J^-,\theta^-}\\
    \sch{\alpha^+,J^+,\theta^+}^{<b}\ar[r, "\id"]&
    \sch{\alpha_b,J_b,\theta_b}^{<b}\ar[ru, "{\Phi(\s V,\hat\omega)}"']
  \end{tikzcd}
\]
However, the commutativity of both the square and the triangle are trivial.

Hence, we now have a canonical morphism $\sch{\alpha^+,J^+,\theta^+}\to\sch{\alpha^-,J^-,\theta^-}$.

Finally, to obtain an invariant only depending on $V$, we must show, given $(\alpha^k,J^k,\theta^k)$ for $k=0,1,2$, that the triangle on the right of
\begin{equation}\label{eq:final-triangle-for-ch}
  \begin{tikzcd}
    \sch{\alpha^0,J^0,\theta^0}^{<a}\ar[r]\ar[d, "\Phi_a"]
    \ar[dd, "\Phi_a", bend right=50pt, out=-90, in=-90]&
    \sch{\alpha^0,J^0,\theta^0}\ar[d, "\Phi"]
    \ar[dd, "\Phi", bend left=50pt, out=90, in=90]\\
    \sch{\alpha^1,J^1,\theta^1}^{<b}\ar[r]\ar[d, "\Phi_b"]&
    \sch{\alpha^1,J^1,\theta^1}\ar[d, "\Phi"]\\
    \sch{\alpha^2,J^2,\theta^2}^{<c}\ar[r]&
    \sch{\alpha^2,J^2,\theta^2}
  \end{tikzcd}
\end{equation}
commutes. By abstract nonsense (using that $\Phi$ is defined as the induced map out of a colimit), it suffices to show that the triangle on the left commutes, i.e., that the outside of the following diagram commutes (where $b$ and $c$ are chosen large enough so that the images of $\Phi_a$ and $\Phi_b$ are contained in the corresponding filtered pieces, as shown).
\[
  \begin{tikzcd}[column sep=-15pt]
    \sch{\alpha^0,J^0,\theta^0}^{<a}\ar[rr, "\id"]\ar[rd, "\id"]&&
    \sch{\alpha^{02}_a,J^{02}_a,\theta^{02}_a}^{<a}
    \ar[rr, "{\Phi(\s V,\hat\omega)}"']\ar[rd, "{\Phi(\s V,\hat\omega)}"']
    &&\sch{\alpha^2,J^2,\theta^2}^{<c}\\
    &\sch{\alpha^{01}_a,J^{01}_a,\theta^{01}_a}^{<a}\ar[ru, "\id"]\ar[rd, "{\Phi(\s V,\hat\omega)}"']&&
    \sch{\alpha^{12}_b,J^{12}_b,\theta^{12}_b}^{<b}\ar[ru, "{\Phi(\s V,\hat\omega)}"']
    \\
    &&\sch{\alpha^1,J^1,\theta^1}^{<b}\ar[ru, "\id"]
  \end{tikzcd}
\]
Here, each $\sch{\alpha^{ij}_a,J^{ij}_a,\theta^{ij}_a}$ is an $a$-good triple, and $\sch{\alpha^{12}_b,J^{12}_b,\theta^{12}_b}$ is a $b$-good triple. The triangle on the left commutes trivially and the triangle on the right is (\ref{eq:ch-triangle}) from \S\ref{subsubsec:defining-CH-conv}. It remains to consider the square, for which we use our freedom in choosing the $(\alpha^{ij}_a,J^{ij}_a)$ and the complex structures $\hat J$ and $\hat J'$ used to define (respectively) the lower-left and upper-right morphisms $\Phi(\s V,\hat\omega)=\Phi(\s V,\hat\omega)_{\hat J,\hat\theta}$ and $\Phi(\s V,\hat\omega)=\Phi(\s V,\hat\omega)_{\hat J',\hat\theta'}$.

We will carefully make these choices in a certain order, and will be choosing various positive constants $0<T_0<T_1<\ldots<T_4$ along the way (see Figure~\ref{fig:01-and-12}).

We first choose any $a$-good pair $(\alpha_a^{01},J_a^{01})$. Hence, there are $T_1>T_0>0$ such that $(\alpha_a^{01},J_a^{01})$ is equal to $(\alpha^0,J^0)$ for $\set{t^+<T_0}$ and equal to $(\alpha^1,J^1)$ for $\set{t^+>T_1}$.

Next, choose $T_2>T_1$ and take $\hat J$ to be an admissible (with respect to $\hat J^{01}_a$ and $J^1$) almost complex structure on $\s V$ which is equal to $\wh{J^1}$ on $\set{t^+>T_2}$.

Now fix some $M_\pm\in\R$ such that $\hat J$ is equal, respectively, to $\wh{J^{01}_a}$ and $\wh{J^1}$ on $\set{r_+\ge M_+}$ and $\set{r_-\le -M_-}$ (where $r_\pm$ are the symplectization coordinates induced by $\alpha^{01}_a$ and $\alpha^1$), and let $T_3>T_2$ be a $T$ as in Proposition~\ref{propn:compactness-cob-rel}, applied with $\hat W=\s V$ and with the present $\hat J$.

Next, we can choose $(\alpha_a^{12},J_a^{12})$ so that it agrees with $(\alpha^1,J^1)$ on $\set{t^+<T_4}$ for some $T_4>T_3$.
\begin{figure}
  \centering
  \begin{tikzpicture}
    \node[left] at (0,5) {$(\alpha^{01}_a,J^{01}_a)$};
    \draw (0,5) -- (10,5); \draw (0,5+0.25) -- (0,5-0.25);
    \node[above] at (2.5,5) {$_{T_0}$}; \draw(2.5,5+0.125) -- (2.5,5-0.125);
    \node[above] at (1.25,5) {$_{(\alpha^0,J^0)}$};
    \node[above] at (3.5,5) {$_{T_1}$}; \draw(3.5,5+0.125) -- (3.5,5-0.125);
    \node[above] at (6,5) {$_{(\alpha^1,J^1)}$};
    \node[left] at (0,4) {$\hat J$};
    \draw (0,4) -- (10,4); \draw (0,4+0.25) -- (0,4-0.25);
    \node[above] at (4,4) {$_{T_2}$}; \draw(4,4+0.125) -- (4,4-0.125);
    \node[above] at (6,4) {$_{\wh{J^1}}$};
    \node[left] at (0,3) {Prop~\ref{propn:compactness-cob-rel}};
    \draw (0,3) -- (10,3); \draw (0,3+0.25) -- (0,3-0.25);
    \node[above] at (4.5,3) {$_{T_3}$}; \draw(4.5,3+0.125) -- (4.5,3-0.125);
    \node[left] at (0,2) {$(\alpha^{12}_a,J^{12}_a)$};
    \draw (0,2) -- (10,2); \draw (0,2+0.25) -- (0,2-0.25);
    \node[above] at (5.5,2) {$_{T_4}$}; \draw(5.5,2+0.125) -- (5.5,2-0.125);
    \node[above] at (2.25,2) {$_{(\alpha^1,J^1)}$};
    \draw(6.5,2+0.125) -- (6.5,2-0.125);
    \node[above] at (8.75,2) {$_{(\alpha^2,J^2)}$};
    \node[left] at (0,1) {$(\alpha^{02}_a,J^{02}_a)$};
    \draw (0,1) -- (10,1); \draw (0,1+0.25) -- (0,1-0.25);
    \node[above] at (4.5,1) {$_{T_3}$}; \draw(4.5,1+0.125) -- (4.5,1-0.125);
    \node[above] at (2.25,1) {$_{(\alpha^{01},J^{01})}$};
    \node[above] at (7,1) {$_{(\alpha^{12},J^{12})}$};
    \node[left] at (0,0) {$\hat J'$};
    \draw (0,0) -- (10,0); \draw (0,0+0.25) -- (0,0-0.25);
    \node[above] at (4.5,0) {$_{T_3}$}; \draw(4.5,0+0.125) -- (4.5,0-0.125);
    \node[above] at (2.25,0) {$_{\hat J}$};
    \node[above] at (7,0) {$_{\widehat{J^{12}}}$};
  \end{tikzpicture}
  \caption{The meanings and relative locations of the $T_i$}
  \label{fig:01-and-12}
\end{figure}
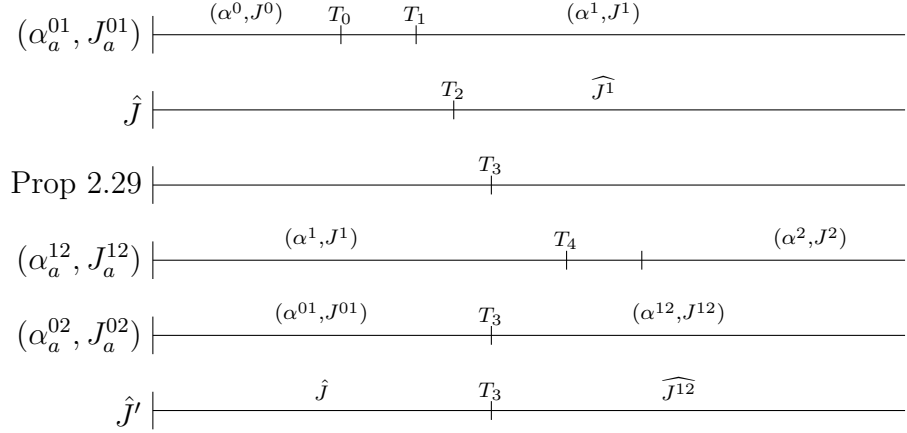

Note that $(\alpha^{01}_a,J^{01}_a)$ and $(\alpha^{12}_a,J^{12}_a)$ are both equal to $(\alpha^1,J^1)$ on $\set{T_1<t^+<T_4}$. Hence, since $T_1<T_3<T_4$, we can define $(\alpha^{02}_a,J^{02}_a)$ to be equal to $(\alpha^{01}_a,J^{01}_a)$ on $\set{t^+\le T_3}$ and to $(\alpha^{12}_a,J^{12}_a)$ on $\set{t^+\ge T_3}$. It follows that $(\alpha^{02}_a,J^{02}_a)$ is $a$-good, assuming that we chose $T_3$ large enough (which we were free to do).

Finally, for the same reason, we can define $\hat J'$ to be equal to $\hat J$ on $\set{t^+\le T_3}$ and to $\wh{J^{12}}$ on $\set{t^+\ge T_3}$, and it is then easy to see that it is admissible with respect to $J^{02}_a$ and $J^{12}_b$, and specifically that it is equal, respectively, to $\wh{J^{02}_a}$ and $\wh{J^{12}_b}$ on $\set{\tilde r^+\ge M^+}$ and $\set{\tilde r^-\le -M^-}$ (where $\tilde r^\pm$ are the symplectization coordinates induced by $\alpha^{02}_a$ and $\alpha^{12}_b$).

It then follows that $\hat J'$ satisfies the hypotheses on $\hat K$ in Proposition~\ref{propn:compactness-cob-rel} (applied with the present $\hat J$ and with $T=T_3$) and hence that the hypotheses of Proposition~\ref{propn:action-bounded-morphisms-agree} (applied to the present $\hat J$ and $\hat J'$) are satisfied, and hence that the square in question commutes.

This establishes the commutativity of the triangle on the right of \eqref{eq:final-triangle-for-ch}, and we thus obtain a commuting diagram consisting of all the $\CH_\bullet(V,\xi;X,t)_{\alpha,J,\theta}$ and the canonical isomorphisms between them.

\begin{defn}
  We define $\CH_\bullet(V,\xi)$ to be the colimit  of the diagram consisting of all the $\CH_\bullet(V,\xi;X,t)_{\alpha,J,\theta}$ together with the above canonical isomorphisms.
  (In particular, $\CH_\bullet(V,\xi;X,t)$ is canonically isomorphic to each of the $\CH_\bullet(V,\xi;X,t)_{\alpha,J,\theta}$ appearing in the diagram.\footnote{%
    At the risk of dwelling on set-theoretic trivialities, but in the hopes of clarifying the situation, we note that since ``the'' colimit is only defined up to isomorphism, we could (using the axiom of choice) simply define $\CH_\bullet(V,\xi)$ for each $(V,\xi)$ to be equal to an arbitrarily chosen $\CH_\bullet(V,\xi;X,t)_{\alpha,J,\theta}$.
    A more canonical construction of the colimit would be as a quotient of the disjoint union of all the $\CH_\bullet(V,\xi;X,t)_{\alpha,J,\theta}$.%
  })
\end{defn}

\subsection{Relationship to sutured contact homology}\label{subsec:sutured}
Let $(\overline V,\xi)$ be a compact contact manifold with convex boundary $\partial\overline V$, and let $V$ be its interior.

We can now associate to $(\overline V,\xi)$ the contact homology $\CH(V,\xi)$ of the interior, as just defined.
On the other hand, as mentioned in the introduction, in \cite{colin-others-sutures-and-ch}, another contact homology group is associated to this situation; namely, to $(\overline V,\xi)$ is associated a certain \emph{sutured contact manifold} $(V_\sut,\Gamma,U(\Gamma),\xi_\sut)$ and to this is associated the \emph{sutured contact homology} $\CH(V_\sut,\Gamma,\xi_\sut)$.

Both of these invariants are computed as the contact homology of some contact form on $V$, which suggests that they might be isomorphic; this would show that the sutured contact homology, which a priori depends on the sutured structure of $V_\sut$ is in fact an invariant of the interior $V$.
However, there are obstacles to establishing this isomorphism, as we now explain.

Suppose $(V_\sut,\Gamma,U(\Gamma),\xi_\sut)$ is a sutured contact manifold corresponding to $\overline V$ (see \cite{colin-others-sutures-and-ch} for the definition and for the notation we will be using here).
Then $\overline V$ can be recovered by rounding the corners of $V_\sut$, i.e., we have a smooth embedding $\overline V\hto V_\sut$ whose complement is a small neighbourhood of the corners.

Now the sutured contact homology of $V_\sut$ is defined as the contact homology of a certain completion $(V_\sut^{\alpha,*},\alpha^*)$, where $\alpha$ is an adapted contact form (in the sense of \cite[Definition~2.8]{colin-others-sutures-and-ch}).
This is obtained by attaching ``vertical'' ends $R_+(\Gamma)\times[1,\infty)_t$ and $R_-(\Gamma)\times(-\infty,-1]_t$ to the ``top and bottom'' $R_\pm(\Gamma)$ of $V_\sut$ and then attaching a ``horizontal'' end $\Gamma\times\R_t\times[0,\infty)_\tau$.
The completed contact form $\alpha^*$ then has the form $C\dt+e^\tau\beta_0$ on the horizontal end, for some contact form $\beta_0$ on $\Gamma$, and has the form $C\dt+\beta_\pm$ on the vertical ends for some Liouville forms $\beta_\pm$ on $R_\pm(\Gamma)$ (so that $\beta_\pm=e^\tau\beta_0$ in the region where $\tau$ is defined); see Figure~\ref{fig:sutured-completion}.

  \begin{figure}\label{fig:sutured-completion}
    \centering
    \begin{tikzpicture}[yscale=0.5]
      \path
        (3,-2) node {$\alpha^*=C\dt-\beta_-$}
        (3,2) node {$\alpha^*=C\dt+\beta_+$}
        (8,0) node {$\alpha^*=C\dt+e^\tau\beta_0$}
        (3,0) node {$\operatorname{int}V_\sut$}
        (-0.8,1) node {$R_+(\Gamma)$}
        (-0.8,-1) node {$R_-(\Gamma)$};
      \fill (6,0) ellipse [x radius=2pt, y radius=4pt] node[anchor=east] {$\Gamma$};
      \draw (0,-1) -- (6,-1) -- (6,1) -- (0,1);
      \draw[dashed]
        (6,1) -- (6,3)
        (6,-1) -- (6,-3);
      \draw[->] (5.5,-3.5) -- (8,-3.5) node[anchor=west] {$\tau$};
      \draw[->] (-2,-4) -- (-2,4) node[anchor=west] {$t$};

      \draw
        (6,-3.3) -- (6,-3.7) node [anchor=north] {$0$}
        (-2.1,1) node [anchor=east] {$1$} -- (-1.9,1)
        (-2.1,-1) node [anchor=east] {$-1$} -- (-1.9,-1);
    \end{tikzpicture}
    \caption{The completion of a sutured contact manifold}
\end{figure}
To compare this with $\CH(V,\xi)$ as defined in this paper, we first need to find a contactomorphism $V_\sut^*\to V$.
By Remark~\ref{rmk:convexity-definitions}, we have that $V$ is isomorphic to the contact manifold obtained by attaching an infinite end $\partial\overline V\times[0,\infty)$ to $\overline V$ using a transverse contact vector field.
Hence, it suffices to find a complete contact vector field on $V_\sut$ which is transverse to $\partial\overline V\hto V_\sut$.

We can do this, for example, by taking a contact Hamiltonian (see Remark~\ref{rmk:convexity-definitions}) $H=H(t)$ only depending on $t$ and satisfying $H(t)=\pm1$ for $\pm t>1-\varepsilon$; one then computes that the associated contact vector field $X_H$ is given by $H\partial_t+H'\partial_\tau$ for $\tau>-\varepsilon$, and by $\partial_t$ for $\pm t>1-\varepsilon$.

Unfortunately, with this convex structure, the form $\alpha^*$ is not admissible; since it does not converge to a contact form as $\tau\to\infty$.
It may be possible to remedy this -- that is, to render $\alpha^*$ admissible -- by the choice of a different convex structure on $V_\sut^*$, but it is not obvious how to do this.

A different approach would be to extend either the definition of sutured contact homology so as to show that it can be computed by forms admissible in our sense, or to extend our definition of contact homology so as to show it can be computed by the form $\alpha^*$.

However, this paper and \cite{colin-others-sutures-and-ch} each use different compactness arguments depending on the particular kind of contact forms that are admitted, and so it is not obvious how this is to be done.

\section{Exotic contact structures}\label{sec:exotic-contact-strucs}
We now apply our invariant to the prove the existence of exotic contact structures on $\R^{2n-1}$ for all $n>2$; we will show that the standard contact structure on $\R^{2n-1}$ has trivial contact homology, and then produce other contact structures for which this is not the case.

The argument will be slightly different for $n$ odd and $n$ even -- i.e., for $2n-1$ of the form $4m+1$ and $4m+3$, respectively.

In both cases, we will begin with a contact structure on a ``Brieskorn manifold'' $M$ and then puncture $M$ to obtain the desired contact structure. In the $4m+1$ case, the contact structure on $M$ was already introduced in \cite{ustilovsky-infinitely-many-spheres}. The $4m+3$ case was not considered in \emph{op. cit.}, but we show that the arguments there can be modified to produce a suitable contact form in this case as well. We note that in this case, the Brieskorn manifold in question is actually a (smoothly) exotic sphere, but that suffices for our purposes, since after puncturing, we still obtain a (smoothly) standard $\R^{4m+3}$ (on which there are no exotic smooth structures).

In both cases, after puncturing (or, more precisely, performing a contact connected sum with $\R^{2n-1}_\std$), we obtain a contact structure on $\R^{2n-1}$. A little bit of careful bookkeeping will then allow us to show that the contact homology of this contact structure is nontrivial.

Finally, by repeating the argument with $M$ replaced by a connected sum of several copies of $M$, we will produce infinitely many contact structures on $\R^{2n-1}$ with pairwise distinct contact homology.

\subsection{Computation for standard $\R^{2n-1}$}
We now would like to show that for all $n>1$, $\CH_\bullet(\R^{2n-1}, \xi_\std)$ is trivial (i.e., isomorphic to $\Q$).

If the standard contact form $\alpha_\std=\dz+\frac12\sum_{i=1}^{n-1}x_i\dy_i-y_i\dx_i$ were admissible, this would be immediate, as this form has no Reeb orbits at all. Hence, our task is to find an admissible form which still has no Reeb orbits.

First, let us see that $\R^{2n-1}_\std$ admits a convex structure.
We let $X$ be the complete vector field $2z\partial_z+\sum_{i=1}^{n-1}x_i\partial x_i+y_i\partial y_i$ and we let $t\colon\R^{2n-1}-\set{0}\to\R$ be the function $t=\frac14\log(z^2+\sum_ix_i^4+y_i^4)$.
We then have $X\intprod\dt=1$.

We now compute:
\[
  \begin{split}
    \Ldv_X\alpha_\std&=
    X\intprod\dv\alpha_\std+\dv(X\intprod\alpha_\std)=\\&=
    \plr{\sum_{i=1}^{n-1}x_i\dy_i-y_i\dx_i}+
    \dv\plr{2z+\frac12\sum_{i=1}^{n-1}x_iy_i-y_ix_i}=2\alpha_\std
  \end{split}
\]
showing that $X$ is a contact vector field, and hence $(X,t)$ is a convex structure (after cutting off $t$ near the origin so that it extends from $\R^{2n-1}-\set{0}$ to $\R^{2n-1}$).

\begin{lem}\label{lem:Rn-no-reeb-orbits}
	There exists an admissible contact form on $(\R^{2n-1}, \xi_\std, X, t)$ which doesn't have any Reeb orbits.
\end{lem}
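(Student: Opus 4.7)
The plan is to modify the standard contact form $\alpha_\std$ only near infinity, using the admissible template $\alpha_\phi$ whose Reeb field is described in Proposition~\ref{propn:alpha-phi-reeb-field}, together with the interpolation result Lemma~\ref{lem:interpolation-admissible-forms}. Since the standard Reeb field $\partial_z$ already has no closed orbits, the only task is to make the form $(X,t)$-admissible while keeping the Reeb in the modified region unable to produce closed trajectories.

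On the cylindrical end $\{t > T_0\}$ I would set $\alpha = \alpha_\phi$ with $\phi$ satisfying $s\phi_s < 0$, $t\phi_t > 0$, and $\phi$ bounded (as in the existence proof earlier in this section). The explicit formula from Proposition~\ref{propn:alpha-phi-reeb-field} shows $\Rb_{\alpha_\phi}$ has strictly positive $\partial_s$-component in $\{\abs{s} \le \varepsilon/2\}$ (using $\phi_t > 0$ on $t > 0$), and equals $\pm\partial_t$ on $\{\pm s > \varepsilon/2\}$. Hence every Reeb trajectory in the end has either $s$ or $\pm t$ strictly monotone, ruling out closed orbits there; trajectories can only leave through $\{s < -\varepsilon/2\}$, flowing with $-\partial_t$ toward the compact part. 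On the core $\{t < T_1\}$ (for some $T_1 < T_0$) I would take $\alpha = \alpha_\std$, whose Reeb $\partial_z$ has no closed orbits.

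The main obstacle is the transition region $\{T_1 \le t \le T_0\}$, where the form must remain $\cc^\infty$-smooth, $(X,t)$-admissible, and orbit-free. In a tubular neighborhood of $\Gamma$ I would use Lemma~\ref{lem:normal-form-near-gamma} to put $\xi_\std$ into the normal form $\alpha = \beta_0 + s\,\dt$, and then write $\alpha_\std = (1/h_\std)\alpha$ and $\alpha_\phi = (1/h_\phi)\alpha$ locally. A direct calculation from the explicit formula for the convex function $t$ in terms of $(\vec x, \vec y, z)$ verifies that the Reeb $\partial_z$ of $\alpha_\std$ satisfies the local admissibility inequalities $\pm\dt(\Rb) > 0$ on $\pm s > 0$ and $\ds(\Rb) > 0$ on $\abs{s} \le 1/2$ appearing in Lemma~\ref{lem:interpolation-admissible-forms}. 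Scaling $\alpha_\phi$ by a small positive constant arranges $h_\std \ge h_\phi$, and Lemma~\ref{lem:interpolation-admissible-forms} then produces an admissible interpolation $\alpha_{12}$ on this slab. Away from $\Gamma$ both forms simplify---they are multiples of $\beta_0$ plus $\pm\dt$ with monotone $\pm\dt(\Rb) > 0$---and a direct convex combination of their scale functions extends the interpolation consistently across the rest of the transition region. A generic $\cc^\infty$-small perturbation in $\{t < T_0\}$ then achieves the non-degeneracy of Definition~\ref{defn:admissible}~\ref{item:admissible-nondegen} without introducing closed orbits, since the unperturbed Reeb admits none.

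Absence of closed Reeb orbits globally then follows by combining region-by-region monotonicity with a cycle-breaking argument: $z$ is strictly increasing along orbits in the core, either $s$ or $\pm t$ is strictly monotone on orbits in the transition slab, and the same holds in the end. An orbit leaving the end through $\{s < -\varepsilon/2\}$ enters the core with $z < 0$ and then has $z$ strictly increasing, so returning to the end would force it to pass through $\{s > \varepsilon/2\}$, whose Reeb is $+\partial_t$ and carries the orbit to $t = \infty$. Thus no closed orbit exists.
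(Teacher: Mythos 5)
Your overall strategy — modify $\alpha_\std$ only near infinity via Lemma~\ref{lem:interpolation-admissible-forms} and then argue that no closed Reeb orbits are created — is the same as the paper's, but the two proofs diverge significantly in the final step, and one of your auxiliary moves is a genuine (if easily repairable) gap.

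The paper's no-orbit argument is a one-liner: both $\dz(\Rb_{\alpha_\std})$ and $\dz(\Rb_{\alpha_\infty})$ are positive, and the interpolation of Lemma~\ref{lem:interpolation-admissible-forms} is built from the contact Hamiltonians $h_i = \alpha(\Rb_{\alpha_i})$ by a pointwise convex combination $h = \rho h_1 + (1-\rho)h_2$; this forces $\dz(\Rb_\alpha) > 0$ globally, so no closed orbit can exist. Your proof instead traces a hypothetical orbit region by region, exploiting a different monotone quantity in each region (namely $z$ in the core, $s$ or $t$ in the transition and end) and then breaking the cycle. This works, but it is more elaborate than necessary: once the admissibility conditions~\ref{item:admissible-s}~and~\ref{item:admissible-t} hold on $\set{t > T_1}$, Proposition~\ref{propn:admiss-reeb-orbits} already confines all closed orbits to the core $\set{t < T_1}$, where $\Rb = \partial_z$ has none — your cycle-breaking analysis essentially re-derives that proposition by hand. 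The paper's observation of a \emph{global} monotone quantity is the cleaner route and is worth internalizing.

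Two concrete issues in your write-up. First, the closing perturbation step ("a generic $\cc^\infty$-small perturbation... achieves non-degeneracy without introducing closed orbits, since the unperturbed Reeb admits none") is both unnecessary and unjustified as stated: non-degeneracy (Definition~\ref{defn:admissible}~\ref{item:admissible-nondegen}) is a condition on closed Reeb orbits and is \emph{vacuously} satisfied when there are none, so no perturbation is needed; and in general a $\cc^\infty$-small perturbation of a Reeb field without closed orbits \emph{can} create them, so the "since the unperturbed Reeb admits none" reasoning would need to invoke a transversality/monotonicity argument that you do not give. Second, the scaling direction is reversed: since $h_{c\alpha_\phi} = \tfrac{1}{c}h_{\alpha_\phi}$, arranging $h_\std \ge h_\phi$ requires scaling $\alpha_\phi$ by a \emph{large} constant, not a small one. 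Both of these are easy to fix, but the perturbation step in particular should simply be deleted.
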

\begin{proof}
  It suffices to construct an admissible contact form $\tilde\alpha$ whose Reeb vector field satisfies $\Rb_{\tilde\alpha}\intprod\dz>0$.

  Note that the form $\alpha=e^{-2t}\alpha_{\std}$ on $\R^{2n-1}-\set{0}$ is $t$-invariant (i.e., $X$-invariant).
  Setting $u\defeq X\intprod\alpha=2z/e^{2t}$ and $\beta=\frac1{2e^{2t}}\sum_ix_i\dy_i-y_i\dx_i$, we have
  \[
    \alpha=\beta+\tfrac12\du+u\dt,
  \]
  where $\partial_t\intprod\du=\partial_t\intprod\beta=0$ (and where $\partial_t=X$).

  Let us write $s$ for the restriction of $u=2z/e^{2t}$ to the complement of the $z$-axis $U=\set{\abs{\vec x}+\abs{\vec y}>0}$.
  On this set, we have a coordinate system $\br{\pi,s,t}\colon U\toi S\times(-2,2)\times\R$, where $S=\set{(\vec x,\vec y)\in\R^{2n-2}\mid\abs{\vec x}^4+\abs{\vec y}^4=1}$, and where $\pi(\vec x,\vec y,z)=e^{-t}(1-\frac{s^2}4)^{-1/4}(\vec x,\vec y)$.
  Then $\alpha$ takes the form
  \[
    \rstr{\alpha}{U}=(1-\tfrac{s^2}{4})^{1/2}\beta_0+\tfrac12\ds+s\dt,
  \]
  where $\beta_0=\pi^*\beta_0$ is a form pulled back from a contact form on $S$ (namely $\beta_0=\frac12(\sum_ix_i\dy_i-y_i\dx_i)\in\Omega^1(S)$).
  Note that this is similar to the contact forms appearing in Lemma~\ref{lem:normal-form-near-gamma}~and~\ref{lem:alpha-phi-reeb-field}, though not identical.
  The rest of the construction will parallel what we did in Proposition~\ref{propn:admissible-forms-exist}.

  Now fix some $T_0>0$ and let $\phi(s,t)$ be a positive function such that
  \begin{itemize}
  \item $\phi=e^{2t}$ for $t<T_0$.
  \item $\phi$ is bounded and the function $\phi_{\infty}(s)=\lim_{t\to\infty}\phi(s,t)$ is smooth.
  \item $\phi_t>0$ and $s\phi_s\le0$.
  \end{itemize}
  (We can find such $\phi$ of the form $\phi(s,t)=\rho_1(t)+\rho_2(s)\rho_3(t)$.)

  We now claim that $\tilde\alpha=\phi\alpha$ is as desired (namely, is admissible and satisfies $\Rb_{\tilde\alpha}\intprod\dz>0$), where we regard $\phi=\phi(u,t)$ as a function on $\R^{2n-1}-\set{0}$, and where $\tilde\alpha$ extends over the origin since $\phi\alpha=\alpha_\std$ near the origin.

  Note first that condition~\ref{item:admissible-nondegen} (non-degeneracy of $\tilde\alpha$) will follow from $\Rb_{\tilde\alpha}$ not having any closed orbits, that condition~\ref{item:admissible-converge} (convergence as $t\to\infty$) holds since $\tilde\alpha$ converges to $\phi_\infty\alpha$, and that condition~\ref{item:admissible-scale} will follows automatically from \ref{item:admissible-s}~and~\ref{item:admissible-t}, since if $g(t)$ is a bounded, non-decreasing function, then the function $\rstr{g\cdot\phi}{\set{t>T_0}}$ still satisfies the above conditions on $\phi$.

  Next, in the region $\set{t<T_0}$, we only need to check $\Rb_{\tilde\alpha}\intprod\dz>0$ (since admissibility only concerns the behaviour of $\tilde\alpha$ for large $t$), but here we have $\tilde\alpha=\alpha_\std$ and hence $\Rb_{\tilde\alpha}=\partial_z$.

  Next, we consider the points $p\ne 0$ on the $z$-axis $\set{\vec{x}=\vec{y}=0}$.
  Here, we have $\tilde\alpha=\phi\beta+\frac12\phi\du+\phi u\dt$.
  Note that at such a point $p$, we have $\ker\du=\ker\dt$ and $\beta=0$, and hence $\dv\tilde\alpha=\phi\dv\beta$.
  It follows that $\Rb_{\tilde\alpha}(p)=(u\phi)\I\partial_t=e^{2t}\phi\I\partial_z$, and hence that $\Rb_{\tilde\alpha}\intprod\dz>0$ and also that condition \ref{item:admissible-s} of admissibility holds.
  Condition \ref{item:admissible-t} holds vacuously here.

  Finally, let us consider points off of the $z$-axis.
  Here, setting $Q=(1-\frac{s^2}{4})^{1/2}$, so that $\dv Q=-\frac{s}{4Q}\ds$, we have $\tilde\alpha=\phi Q\beta_0+\frac12\phi\ds+\phi s\dt$, and hence
  \[
    \dv\tilde\alpha=
    \plr{\phi_s Q-\phi\frac{s}{4Q}}\ds\wedge\beta_0+
    \phi_tQ\,\dt\wedge\beta_0+
    \phi Q\, \dv\beta_0+
    (\phi+s\phi_s-\tfrac12\phi_t)\ds\wedge\dt.
  \]
  A similar computation to the one in Lemma~\ref{lem:alpha-phi-reeb-field} shows that
  \[
    \Rb_{\tilde\alpha}=
    \tfrac{Q^2}{\phi^2}
    \plr{
      Q\I (\phi+s\phi_s-\tfrac12\phi_t)\Rb_{\beta_0}+
      \phi_t\partial_s-
      (\phi_s-\phi\tfrac{s}{4Q^2})\partial_t
    }.
  \]
  We thus have
  $\Rb_{\tilde\alpha}\intprod\dz=
  e^{2t}\Rb_{\tilde\alpha}\intprod(\frac12\ds+s\dt)=
  \tfrac{e^{2t}Q^2}{\phi^2}
  \plr{\frac12\phi_t-s\phi_s+\frac{s^2}{4Q^2}\phi}>0$.

  Finally, condition~\ref{item:admissible-s} of admissibility holds since $s\Rb_{\tilde\alpha}\intprod\dt=\tfrac{Q^2}{\phi^2}(\phi\frac{s^2}{4Q^2}-s\phi_s)$, which is positive when $s\ne0$, and condition~\ref{item:admissible-t} holds since $\Rb_{\tilde\alpha}\intprod\ds=\tfrac{Q^2}{\phi^2}\phi_t>0$.
\end{proof}

\begin{propn}\label{propn:Rn-triv-contact-homology}
  $\CH_\bullet(\R^{2n-1},\xi_\std)\cong\Q$.
\end{propn}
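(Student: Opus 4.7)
The plan is to use Lemma~\ref{lem:Rn-no-reeb-orbits}, which furnishes an $(X,t)$-admissible contact form $\alpha$ on $(\R^{2n-1},\xi_\std)$ whose Reeb vector field has no closed orbits. Since the contact homology $\CH_\bullet(V,\xi)$ is defined (via \S\ref{subsec:ch-for-open}) as a limit of the algebras $\CH_\bullet(V,\xi;X,t)_{\alpha,J,\theta}$ under canonical isomorphisms, it suffices to exhibit a single admissible triple $(\alpha,J,\theta)$ for which the resulting algebra is $\Q$.

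First, I would take $\alpha$ as above, choose any $(X,t)$-admissible almost complex structure $J$ on $\xi_\std$ (which exists by the discussion following the definition of admissible pairs), and choose any $\theta \in \Theta_\rI(\R^{2n-1},\alpha,J)$. Then I would invoke the key structural fact, recorded in Remark~\ref{rmk:ch-details}: the generators of $\CC_\bullet(V,\xi)_{\alpha,J,\theta}$ as a supercommutative unital $\Q$-superalgebra are in bijection with the good Reeb orbits of $\alpha$. Since $\alpha$ has no Reeb orbits whatsoever, the set of generators is empty, and hence $\CC_\bullet(\R^{2n-1},\xi_\std)_{\alpha,J,\theta}$ is the free supercommutative unital $\Q$-superalgebra on no generators, which is just $\Q$ itself (concentrated in degree $0$).

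The differential is then automatically zero (it has nothing nontrivial to act on, since it must kill the unit), so passing to homology gives $\CH_\bullet(\R^{2n-1},\xi_\std)_{\alpha,J,\theta} \cong \Q$. Finally, applying the definition of $\CH_\bullet(\R^{2n-1},\xi_\std)$ as the limit of a diagram in which every object is canonically isomorphic to this one, we conclude $\CH_\bullet(\R^{2n-1},\xi_\std) \cong \Q$, as required.

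There is essentially no obstacle here: the entire substance of the proof has been shifted onto Lemma~\ref{lem:Rn-no-reeb-orbits} (and behind it, Lemma~\ref{lem:interpolation-admissible-forms}), which provide the orbit-free admissible form. Once such a form is in hand, the conclusion is a formal consequence of the fact that the contact homology algebra is generated by Reeb orbits. The only thing to double-check is that the existence of a $\theta$ is guaranteed (which is part of the main theorem of \cite{pardon-contact-homology-and-vfc} carried over to our setting in \S\ref{subsubsec:ch-conv-setups}), and that the convention that the empty algebra is $\Q$ (the unit) is being used -- both of which are explicit in the framework established in \S\ref{subsec:ch-for-fixed-convex}.
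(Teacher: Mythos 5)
Your argument is correct and is essentially the paper's own proof, just spelled out in more detail: both rest entirely on Lemma~\ref{lem:Rn-no-reeb-orbits} together with the fact (Remark~\ref{rmk:ch-details}) that the chain algebra is freely generated by good Reeb orbits, so an orbit-free admissible form forces $\CC_\bullet = \Q$ with vanishing differential.
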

\begin{proof}
  This is immediate from Lemma~\ref{lem:Rn-no-reeb-orbits} and the description of $\CC_\bullet$ in terms of Reeb orbits (see Remark~\ref{rmk:ch-details}).
\end{proof}

\subsection{Brieskorn manifolds}
In what follows, we will need to make use of the grading on $\CH_\bullet$: we recall that a null-homologous Reeb orbit $\gamma$ in a $(2n-1)$-dimensional contact manifold $V$ has a well-defined \emph{Conley-Zehnder index}, and its \emph{homological degree} is defined to be its Conley-Zehnder index plus $(n-3)$. If the homology groups $\Hm_1(V)$ and $\Hm_2(V)$ vanish, this induces a grading on $\CC_\bullet(V,\xi)$, where the degree of a generator given by a word of Reeb orbits is equal to the sum of the homological degrees of the individual Reeb orbits (see \cite[\S\S1.8,2.12]{pardon-contact-homology-and-vfc}).

We now have from \cite{ustilovsky-infinitely-many-spheres}:
\begin{propn}\label{propn:ustilovsky-sphere-degrees-1mod4}
  For each $m>0$, there exists a contact structure $\xi$ on $\s^{4m+1}=\s^{2n-1}$ (where $n=2m+1$) and a representing non-degenerate contact form, all of whose Reeb orbits have even homological degree and such that there are no orbits of homological degree $k<2n-4$, there is one orbit of homological degree $k$ for each even $k=2n-4,\ldots,4n-10$, and there are two orbits of homological degree $4n-8$.
\end{propn}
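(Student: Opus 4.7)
The plan is to take the contact structure $\xi$ on $\s^{4m+1}=\s^{2n-1}$ (with $n=2m+1$) from \cite{ustilovsky-infinitely-many-spheres}, where $\s^{2n-1}$ is realized as the Brieskorn manifold
\[
\Sigma_p \defeq \set{z\in\C^{n+1}\mid z_0^p+z_1^2+\cdots+z_n^2=0}\cap \s^{2n+1},
\]
for an appropriately chosen odd integer $p$ (so that $\Sigma_p$ is in fact diffeomorphic to $\s^{2n-1}$; recall that for $n$ odd, this holds when $p\equiv\pm1\pmod 8$), and $\xi$ is induced by the restriction of the standard Liouville form on $\C^{n+1}$. With the associated contact form $\alpha_0$, the Reeb flow is a reparametrization of the diagonal weighted $\s^1$-action on $\C^{n+1}$ with weights $(2,p,\ldots,p)$, so that $\alpha_0$ is Morse--Bott; its closed orbits are organized into families indexed by an iteration number $N\ge 1$, where the Morse--Bott family at level $N$ is a single $\s^1$-orbit when $p\nmid N$, and a lower-dimensional Brieskorn variety when $p\mid N$.

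Next, I would perform a Morse--Bott perturbation of $\alpha_0$, following \cite[\S3]{ustilovsky-infinitely-many-spheres}, to obtain a non-degenerate contact form $\alpha$ representing $\xi$. Each Morse--Bott $\s^1$-family contributes two non-degenerate simple orbits (the max and min of an auxiliary Morse function on the orbit space), while higher-dimensional Morse--Bott families contribute a number of orbits controlled by the topology of those families.

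The main step is then to apply the Robbin--Salamon index formula to the explicit linearized flow of $\alpha_0$, as carried out in \cite[\S4]{ustilovsky-infinitely-many-spheres}, in order to compute the Conley--Zehnder index $\mu_{CZ}(N)$ of each orbit of iteration $N$. The index grows roughly linearly in $N$, with slope controlled by $p$, and a parity check on the weights $(2,p,\ldots,p)$ shows that $\mu_{CZ}(N)$ is always even. By choosing $p$ sufficiently large (depending on $n$), the low-level orbits become spread out so as to realize exactly the claimed pattern, and the shift of $n-3$ from Conley--Zehnder index to homological degree (applicable since $\Hm_1(\s^{2n-1})=\Hm_2(\s^{2n-1})=0$ for $n\ge 2$) places them in the stated range: a single orbit of homological degree $2n-4+2(N-1)$ for $N=1,\ldots,n-2$, and two orbits of degree $4n-8$ coming from the corresponding family at the next iteration level.

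The main obstacle is the index bookkeeping -- verifying that the Robbin--Salamon formula for these particular weights yields precisely the claimed count of orbits in each degree, in particular the two orbits at degree $4n-8$. However, this is a purely mechanical calculation given the explicit form of the linearized Reeb flow and is essentially carried out in \emph{loc.~cit.}; the only (mild) novelty here is recording the result in the exact form needed to extract non-triviality of $\CH_\bullet$ after puncturing in the subsequent sections.
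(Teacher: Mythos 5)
Your overall plan is the same as the paper's: the paper's proof is just a one-line citation to \cite[Lemma~4.3]{ustilovsky-infinitely-many-spheres}, and you reconstruct that lemma's content. Your final tally of orbits and homological degrees matches the statement, and you correctly record the need for $p\equiv\pm1\pmod 8$ to ensure the Brieskorn link is the \emph{standard} sphere (a point the paper's ``any $p>3$'' is slightly imprecise about).

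The one thing to correct is your description of the Morse--Bott structure and perturbation, which does not match Ustilovsky's construction and, taken literally, would fail to give the stated count. For the unperturbed $\alpha_0$, the fixed set of the Reeb flow at time $N\pi$ is the lower-dimensional sub-link $\set{z_0=0}=\Sigma(2,\ldots,2)$ when $p\nmid N$ and the \emph{whole} manifold when $p\mid N$; your two cases are swapped, and neither fixed set is a single $\s^1$. More substantively, Ustilovsky does not break the degeneracy via an auxiliary Morse function on the orbit space: he rescales to $\alpha'=H^{-1}\alpha_0$ with $H$ depending on irrational, rationally independent parameters $\varepsilon_j$, so the Reeb flow becomes a perturbed weighted circle action with incommensurable rotation numbers whose periodic orbits are already isolated. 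Your ``two nondegenerate orbits (a max and a min) per $\s^1$-family'' would in fact contradict the claimed count of exactly one orbit in each even degree $2n-4,\ldots,4n-10$ --- and since all orbit degrees here are even, there is no way to salvage this by having the two orbits land in adjacent degrees. So this intermediate sketch should be replaced by the actual $H^{-1}$-rescaling argument of \emph{loc.~cit.}, to which you in any case defer for the index bookkeeping; with that substitution the proof is fine.
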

\begin{proof}
  This follows from \cite[Lemma~4.3]{ustilovsky-infinitely-many-spheres} with any $p>3$ (by considering the case $N=1$).
\end{proof}

We have an analogous result in dimension $4m+3$, but it is not quite as readily available from the work in \cite{ustilovsky-infinitely-many-spheres}:
\begin{propn}\label{propn:ustilovsky-sphere-degrees-3mod4}
  For each $m>0$, there exists a contact manifold $(M,\xi)$ with a non-degenerate contact form $\alpha$ such that $M$ is homeomorphic (but not necessarily diffeomorphic) to $\s^{4m+3}=\s^{2d-1}$ (where $d=2m+2$), such that all the Reeb orbits of $\alpha$ have even homological degree, and such that there are no orbits of homological degree $k<2d-4$, there is one orbit of homological degree $k$ for each $k=2d-4,\ldots,4d-12$, and there are two orbits of homological degree $4d-10$.
\end{propn}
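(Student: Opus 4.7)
The plan is to imitate the construction of \cite{ustilovsky-infinitely-many-spheres}, adjusting parameters to the $(4m+3)$-dimensional case. Setting $d = 2m+2$, I would take
\[
  M = \set{z \in \C^{d+1} : z_0^p + z_1^2 + \cdots + z_d^2 = 0} \cap \s^{2d+1}
\]
for a suitably chosen odd $p$. This manifold has real dimension $2d-1 = 4m+3$, and by classical results on Brieskorn manifolds it is, for odd $p$, homeomorphic to $\s^{4m+3}$; for appropriate residues of $p$ modulo $8$, $M$ is a Kervaire sphere and hence not diffeomorphic to the standard sphere, yielding the ``not diffeomorphic'' part of the statement.

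I would equip $M$ with the contact form $\alpha_0$ obtained by restricting the standard Liouville form on $\C^{d+1}$. The Reeb flow of $\alpha_0$ is a weighted circle action and is therefore highly degenerate, with closed orbits foliated into Morse--Bott families. Ustilovsky's device is to multiply $\alpha_0$ by a function depending on a moment-map-like quantity so that each Morse--Bott family collapses to one or two non-degenerate orbits. The construction of this perturbation in \emph{loc.\ cit.} makes no use of the parity of the dimension, so it goes through verbatim in our case and produces a non-degenerate contact form $\alpha$ on $M$, whose Reeb orbits are indexed by a positive integer $N$ (together, in one Morse--Bott family, with an additional binary label distinguishing two orbits).

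Next I would invoke the Conley--Zehnder index formula derived in \cite{ustilovsky-infinitely-many-spheres} (a variant of the Bott iteration formula), which on a Brieskorn manifold $\Sigma(a_0,\ldots,a_d)$ expresses the index of each simple or multiply-covered orbit $\gamma_N$ as a sum of terms involving $\floor{N/a_j}$ plus a dimension-dependent constant. Plugging in $(a_0,\ldots,a_d) = (p,2,\ldots,2)$ and adding the homological grading shift $d-3 = 2m-1$ gives the homological degrees. The proof then concludes with a combinatorial check: for $p$ in the correct congruence class and sufficiently large, all of these degrees are even, no orbit has degree $<2d-4$, each even value in $[2d-4,4d-12]$ is hit by exactly one orbit, and $4d-10$ is hit by exactly two (the doubling coming from the single remaining Morse--Bott family, on which a symmetry-breaking perturbation retains two critical points).

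The main obstacle is this last tallying. In the $(4m+1)$ case treated in \emph{loc.\ cit.}, $d$ is odd, whereas here $d = 2m+2$ is even; the dimension-dependent shifts in $\mu_{CZ}$, when combined with the new grading shift $d-3$, therefore fall differently, and $p$ must be chosen modulo some small integer so that all the relevant Conley--Zehnder indices come out odd (making the homological degrees even). Once the correct congruence class of $p$ is pinned down, the counts of orbits in each degree reduce to the same type of floor-function bookkeeping as in \emph{loc.\ cit.}, adapted to the present exponents, and no new geometric input is needed beyond carefully re-running Ustilovsky's analysis with this shift in parity.
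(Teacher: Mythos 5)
Your choice of Brieskorn manifold does not work. For $M=\Sigma(p,2,\ldots,2)\subset\s^{2d+1}$ with $d=2m+2$ and $p$ odd, Brieskorn's criterion (Satz 1 of \cite{brieskorn-beispiele-von-sing}) fails: the associated graph on the exponent set has a single isolated vertex (the $p$) together with a clique of size $d=2m+2$, which is \emph{even}, so neither of Brieskorn's two conditions for $\Sigma(a)$ to be a topological sphere is satisfied. The situation in \cite{ustilovsky-infinitely-many-spheres} is different precisely because there $2n-1=4m+1$, so the clique of $2$'s has odd size. In dimension $4m+3$ your $\Sigma(p,2,\ldots,2)$ is not even a homotopy sphere, so the whole argument collapses at the very first step, before any contact geometry. (The ``Kervaire sphere for appropriate $p\bmod 8$'' remark is also misplaced here: that phenomenon lives in dimensions $4m+1$.)

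The paper's fix is to add a second exponent: it takes $\Sigma(q,p,2,\ldots,2)$ with $p,q$ distinct odd primes, so that the graph has \emph{two} isolated vertices and Brieskorn's first condition holds regardless of the parity of the remaining clique. This is not merely a cosmetic change for the index bookkeeping: the contact form $\alpha'=H^{-1}\alpha$ now involves an extra term $(\varepsilon_{-1}-1)\abs{w_{-1}}^2$ with a new small parameter $\varepsilon_{-1}$, the Reeb flow acquires an extra factor $e^{4it\varepsilon_{-1}/q}$, and each Conley--Zehnder index picks up an additional summand of the form $2\floor{2N\frac{p}{q}\varepsilon_{-1}}+1$ (or its analogue for $\gamma_j^\pm$). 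One must then choose $\varepsilon_{-1}$ sufficiently small so that these new terms contribute a constant $2$ (after adding the grading shift $d-3$) up to the action range of interest, which recovers the degree count from \cite[Lemma~4.3]{ustilovsky-infinitely-many-spheres}. So beyond fixing the manifold, you would also need to redo this part of the index calculation; ``re-running Ustilovsky's analysis with a shift in parity'' understates what is required.
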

\begin{proof}
  This will be obtained by repeating the calculations done in \cite{ustilovsky-infinitely-many-spheres}, with some modifications. Let us set $n=d-1$.

  The manifold $M$ is again a Brieskorn manifold $\Sigma(a)=\Sigma(a_{-1},a_0,\ldots,a_n)$ as in \cite{ustilovsky-infinitely-many-spheres} (the unusual indexing is to ease comparison with \emph{op. cit.}), where now $a_{-1}=q$, $a_0=p$, $a_1=\cdots=a_n=2$, where $p$ and $q$ are distinct odd primes.
  According to \cite[Satz~1]{brieskorn-beispiele-von-sing}, $\Sigma(a)$ is indeed homeomorphic to $\s^{2d-1}$.

  We take the same contact form $\alpha=\frac{i}8\sum_{j=-1}^na_j(z_jd\bar z_j-\bar z_jdz_j)$ as in \cite{ustilovsky-infinitely-many-spheres},\footnote{We would like to take the opportunity to clarify a confusing circumstance regarding this form; in \emph{op. cit.}, the fact that this is a contact form is attributed to \cite{lutz-meckert-brieskorn}. However, in the latter paper, the form is defined with $a_j\I$ in place of $a_j$. This is the wrong form; later in that paper, it becomes clear that they really meant to use the form with $a_j$ and not $a_j\I$. However, in \cite{ustilovsky-infinitely-many-spheres} and, as far as the present authors could tell, in all of the other papers that cite \cite{lutz-meckert-brieskorn}, the correct form is used without any mention of the error.} where $z_{-1},\ldots,z_n$ are the coordinates on $\C^{n+3}$.

  Now, referring to \cite[\S4]{ustilovsky-infinitely-many-spheres}, we introduce the coordinates $w_0,\ldots,w_n$ in the same way, and now also define $w_{-1}=z_{-1}$.

  Next, we define $H(w)$ in the same way as in \emph{loc. cit.}, except that we add an additional term $(\varepsilon_{-1}-1)\abs{w_{-1}}^2$ with $0<\varepsilon_{-1}<1$. This is again a real positive function on $M$, and we again define $\alpha'=H\I\alpha$. The Reeb vector field of $\alpha'$ is then given as in \cite[Lemma~4.1]{ustilovsky-infinitely-many-spheres}, except that there is now an additional entry of $\varepsilon_{-1}\frac{4i}{q}w_{-1}$ at the beginning.

  Similarly, the expression for the flow $\phi^t(w)$ is the same as given on \cite[p.~786]{ustilovsky-infinitely-many-spheres}, with an additional entry $e^{4it\varepsilon_{-1}/q}w_{-1}$ at the beginning.

  The Reeb orbits are then exactly the same ones $\gamma_0,\gamma_j^+,\gamma_j^-$ given there (though each with an additional entry 0 at the beginning) assuming that the $\varepsilon_j$ (including $\varepsilon_{-1}$!) are irrational and linearly independent over $\Q$.

  The argument in \cite[Lemma~4.2]{ustilovsky-infinitely-many-spheres} that these Reeb orbits are non-degenerate remains valid, but there are some slight changes in the computation of the Conley-Zehnder indices.

  First, in the expression for $\xi_w$, the first equation has an additional term $qw_{-1}^{q-1}w_{-1}v_{-1}$ on the left hand side. Next, we define the vectors $X_1$ and $X_2$ in the same way but with an additional first entry $\bar w_{-1}^{q-1}$ and $-2iw_{-1}/q$, respectively, and we keep the definitions of $Y_1$ and $Y_2$. The expressions for $\omega(X_1,Y_2)$, $\omega(Y_1,Y_2)$, and $\omega(X_1,Y_1)$ then have an additional term $\frac{q-2}2\operatorname{Im}(w_{-1}^q)$, $-\frac{q-2}2\operatorname{Re}(w_{-1}^q)$, and $\frac{q}{2}\abs{w_{-1}}^{2q-2}$, respectively, and the final expression for $\tilde Y_2$ has an additional term $-\frac{q-2}{2}\frac{w_{-1}^q}{\omega(X_1,Y_1)}X_1$.

  Next, the expressions for $\phi^t_*\tilde X_1(w)$ and so on are the same, except that $\phi^t_*\tilde Y_2(w)$ has an extra term $e^{4it}(e^{\varepsilon_{-1}}-1)\frac{q-2}{2}\frac{w_{-1}^q}{\omega(X_1(w),Y_1(w))}X_1(w)$; however, since $w_{-1}$ vanishes along all of the Reeb orbits $\gamma_0,\gamma^+_j,\gamma^-_j$, this term does not affect any of the further computations.

  The formula for $\mu(N\gamma_0)$ then has an additional term $2\floor{2N\frac pq\varepsilon_{-1}}+1$, and the formulas for $\mu(N\gamma^\pm_{j})$ have an additional term $2\floor{\frac{2N\varepsilon_{-1}}{q(1\pm\varepsilon_{j})}}+1$. Hence, recalling that now $d=n+1$, so that we should add $d-3=n-2$ to the Conley-Zehnder index to obtain the homological degree, we have an additional term of $2\floor{2N\frac pq\varepsilon_{-1}}+2$ and $2\floor{\frac{2N\varepsilon_{-1}}{q(1\pm\varepsilon_{j})}}+2$, respectively, in the expressions for $\tilde\mu(N\gamma_0)$ and $\tilde\mu(N\gamma_j^\pm)$ in the statement of \cite[Lemma~4.2]{ustilovsky-infinitely-many-spheres}; in particular, the homological degrees are still all even.

  Hence, if we fix $\varepsilon_1,\ldots,\varepsilon_n$, then for any $K>0$, by choosing $\varepsilon_{-1}$ small enough, we conclude that the number of orbits of homological degree $k$ is given by the number $c_{k-2}$ from \cite[Lemma~4.3]{ustilovsky-infinitely-many-spheres} for $k<K$, which implies our claim (by taking $p>3$).
\end{proof}

\subsection{Exotic $\R^{2n-1}$}
For the remainder, fix some $n>2$, and let $(M,\xi)$ and $\alpha$ be a contact $(2n-1)$-manifold and contact form as in Proposition~\ref{propn:ustilovsky-sphere-degrees-1mod4} (when $2n-1=4m+1$) or Proposition~\ref{propn:ustilovsky-sphere-degrees-3mod4} (when $2n-1=4m+3$). In either case, $M$ is homeomorphic to $\s^{2n-1}$, and hence, if we remove a point from $M$, the result is (by \cite{stallings-no-exotic-rn}) diffeomorphic to $\R^{2n-1}$.

\begin{lem}
  The resulting contact structure on $M\setminus\set{\pt}\cong\R^{2n-1}$ is tight.
\end{lem}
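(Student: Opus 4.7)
The plan is to deduce tightness of $M\setminus\set{\pt}$ from tightness of the closed Brieskorn manifold $(M,\xi)$, using that the overtwisted disk of Borman--Eliashberg--Murphy is a compact object. So the bulk of the work is to establish that $(M,\xi)$ itself is tight, and then the passage to an open subset is essentially formal.

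First, I would argue that $(M,\xi)$ is tight by showing it has non-trivial contact homology. The contact form $\alpha$ from Proposition~\ref{propn:ustilovsky-sphere-degrees-1mod4} (when $2n-1 = 4m+1$) or Proposition~\ref{propn:ustilovsky-sphere-degrees-3mod4} (when $2n-1 = 4m+3$) is non-degenerate, and all of its Reeb orbits have even homological degree. Since the generators of $\CC_\bullet(M,\xi)_{\alpha,J,\theta}$ are multisets of ``good'' Reeb orbits, with total degree equal to the sum of the individual degrees (Remark~\ref{rmk:ch-details}), every generator has even degree. The differential has degree $-1$, so it must vanish for parity reasons, and hence $\CH_\bullet(M,\xi)$ is the full free supercommutative $\Q$-algebra on the good Reeb orbits -- in particular, it is non-trivial and is not isomorphic to $\Q$. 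By the theorem of Bourgeois--Niederkr\"uger (building on Borman--Eliashberg--Murphy) that a closed overtwisted contact manifold has vanishing contact homology algebra, $(M,\xi)$ cannot be overtwisted, and is therefore tight.

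Next, suppose for contradiction that the contact structure on $M\setminus\set{\pt}$ is overtwisted. By the Borman--Eliashberg--Murphy definition, this means that $M\setminus\set{\pt}$ contains an embedded overtwisted disk. This disk is a compact subset of $M\setminus\set{\pt}\subset M$, so it also exhibits $(M,\xi)$ itself as overtwisted, contradicting the first step. Hence the contact structure on $M\setminus\set{\pt}$ is tight.

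The main obstacle is really the first step, which relies on (i) the parity of the Conley--Zehnder indices carried out in the proofs of the two propositions above, and (ii) the external input that overtwistedness forces $\CH_\bullet$ to vanish. Once these are in hand, descent of tightness to the open submanifold $M\setminus\set{\pt}$ is immediate from the compactness of the overtwisted disk model.
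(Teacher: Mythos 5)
Your proof is correct, but it takes a genuinely different route from the paper's. Both proofs share the final, easy step -- an open subset of a tight contact manifold is tight, which you re-derive directly from the compactness of the overtwisted disk -- but they diverge on how to establish tightness of the closed Brieskorn manifold $M$. The paper invokes symplectic fillability: $M$ is the link of an isolated hypersurface singularity, perturbing the defining holomorphic function yields a smooth affine variety that provides a holomorphic (hence Weinstein, hence symplectic) filling, and symplectically fillable closed contact manifolds are tight by Borman--Eliashberg--Murphy. Your proof instead shows that $\CH_\bullet(M,\xi)$ is non-trivial, using the parity observation that all generators have even degree so the degree $-1$ differential vanishes, and then appeals to the theorem that a closed overtwisted contact manifold has vanishing contact homology algebra. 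The paper's route is more elementary and self-contained: the fillability argument is classical and requires neither the machinery of contact homology nor the chain of results (overtwisted $\Rightarrow$ plastikstufe $\Rightarrow$ $\CH_\bullet = 0$) that your argument leans on -- results that are substantial theorems in their own right and whose rigorous establishment within the virtual-cycle framework used here is a point one would want to source carefully. Your route has the conceptual appeal of staying entirely within the $\CH_\bullet$ worldview of the paper, and your parity observation is a nice self-contained way to see $\CH_\bullet(M,\xi) \neq 0$, but for the purpose of this lemma, where one only needs tightness and not any quantitative $\CH$ information, the fillability argument is the cheaper tool.
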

\begin{proof}
  This follows from $M$ itself being tight, since an open subset of a tight contact manifold is again tight. That the Brieskorn manifold $M$ is tight is well-known and follows from its being \emph{symplectically fillable} (see \cite[p.~284]{borman-eliashberg-murphy-overtwisted}), which in turn follows from its being holomorphically fillable: each Brieskorn manifold is the link of an isolated singular point of the zero set a certain holomorphic function, and by slightly perturbing this function, one obtains a smooth variety, and hence a holomorphic filling (\textit{cf}. \cite[\S4]{brieskorn-beispiele-von-sing}) of a contactomorphic manifold (by Gray's stability theorem).
\end{proof}

\begin{thm}\label{thm:exotic}
  The tight contact manifold $M\setminus\set{\pt}\cong\R^{2n-1}$ is not contactomorphic to $\R^{2n-1}_\std$. Moreover, there are infinitely many pairwise non-contactomorphic tight contact structures on $\R^{2n-1}$.
\end{thm}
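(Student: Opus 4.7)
The plan is to use the contact homology invariant $\CH_\bullet$ of \S\ref{sec:defining-ch} to distinguish the exotic contact structures from the standard one. Since $\CH_\bullet(\R^{2n-1}_\std)\cong\Q$ by Proposition~\ref{propn:Rn-triv-contact-homology}, it suffices to exhibit a nonzero class of positive degree in $\CH_\bullet(M\setminus\set{\pt})$, and, for the second assertion, to produce infinitely many such contact structures whose contact homologies are pairwise non-isomorphic.

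First I would choose a convex structure $(X,t)$ on $M\setminus\set{\pt}$ whose restriction to a Darboux neighbourhood of the puncture agrees with the convex structure on $\R^{2n-1}_\std$ from the proof of Proposition~\ref{propn:Rn-triv-contact-homology} (so $t\to\infty$ as one approaches the removed point). Fixing an action bound $a>0$, I would then construct an $(X,t)$-admissible contact form $\tilde\alpha$ that coincides with the Brieskorn contact form $\alpha$ on a large compact subset containing all of the Reeb orbits of $\alpha$ of action $<a$, and which has \emph{no} Reeb orbits in the Darboux end at infinity. The existence of such $\tilde\alpha$ will follow by combining the interpolation technique of Lemma~\ref{lem:interpolation-admissible-forms} with the orbit-free admissible form on $\R^{2n-1}_\std$ from Lemma~\ref{lem:Rn-no-reeb-orbits}, where an appropriate rescaling of $\alpha$ by a large positive constant on the interpolation collar (as in the proof of Proposition~\ref{propn:interpolating-form}) ensures that no new low-action orbits are produced. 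By Propositions~\ref{propn:ustilovsky-sphere-degrees-1mod4} and \ref{propn:ustilovsky-sphere-degrees-3mod4}, every Reeb orbit of $\tilde\alpha$ of action $<a$ then has \emph{even} homological degree; hence every generator (a word in such orbits) of $\CC^{<a}_\bullet(M\setminus\set{\pt})_{\tilde\alpha,J,\theta}$ also has even degree. Since the contact homology differential lowers degree by one, it vanishes identically on $\CC^{<a}_\bullet$, so $\CH^{<a}_\bullet$ is nontrivial in degree $2n-4$, and taking the colimit over $a$ yields $\CH_\bullet(M\setminus\set{\pt})\not\cong\Q$, proving the first claim.

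For infinitely many pairwise non-isomorphic exotic tight contact structures, I would repeat the construction with $M$ replaced by the contact connected sum $M_k\defeq M\#\cdots\#M$ of $k$ copies of $M$. Since $M_k$ remains homeomorphic to $\s^{2n-1}$, the space $M_k\setminus\set{\pt}$ is diffeomorphic to $\R^{2n-1}$ by \cite{stallings-no-exotic-rn}; each of the $k$ summands contributes its own independent generator to $\CH_{2n-4}$, so $\dim_\Q\CH_{2n-4}(M_k\setminus\set{\pt})\ge k$, distinguishing the resulting contact structures pairwise. Tightness is preserved under puncturing (open subsets of tight manifolds are tight), so the preceding lemma applies to give tightness in each case.

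The main technical obstacle is the construction of $\tilde\alpha$: one must produce an admissible form on the Darboux end with no Reeb orbits while matching $\alpha$ on the compact core, \emph{and} ensure that the interpolation across the collar does not introduce any new closed orbits of action $<a$. The key idea is that the rescaling constant $C$ in Proposition~\ref{propn:interpolating-form} can be chosen so large that any orbit crossing the collar would have action $\ge Ca_0>a$ for some fixed $a_0>0$ depending only on geometry near the collar; the orbit-free-end property then comes from $\R^{2n-1}_\std$ admitting an admissible form without any Reeb orbits (Lemma~\ref{lem:Rn-no-reeb-orbits}). Once $\tilde\alpha$ is in hand, the even-degree conclusion and the non-triviality of $\CH_\bullet$ follow immediately from the parity of the Conley--Zehnder indices in the Ustilovsky computation.
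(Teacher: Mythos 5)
The proposal follows the same broad outline as the paper — use the $\CH_\bullet$ invariant of \S\ref{sec:defining-ch} to distinguish $M\setminus\{\pt\}$ from $\R^{2n-1}_\std$, with the connected-sums $\#_i M$ giving infinitely many examples — but the core argument for non-triviality of $\CH_\bullet(M\setminus\{\pt\})$ takes a different (and, I think, incorrect) route, and this is where there is a genuine gap.

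You claim to construct an $(X,t)$-admissible form $\tilde\alpha$ on $M\setminus\{\pt\}$ agreeing with the Brieskorn form $\alpha$ on a compact core and having no new Reeb orbits of action $<a$; since the orbits of $\alpha$ all have even homological degree, you conclude that $\CC_\bullet^{<a}$ is supported in even degrees and the differential vanishes. The problem is that the Brieskorn form near $\pt$ (the Darboux form $\alpha_\std$, with $\Rb=\partial_z$) is \emph{incompatible} with the admissibility conditions for the convex structure near the puncture, and not merely up to a bounded modification. For the convex structure to have $t\to\infty$ at $\pt$, the contact vector field $X$ must point \emph{toward} $\pt$, so near $\pt$ we have $X\approx -(2z\partial_z + \sum x_i\partial_{x_i}+y_i\partial_{y_i})$ and $u = X\intprod\alpha_\std = -2z$. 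Condition~\ref{item:admissible-t} of Definition~\ref{defn:admissible} requires $\Rb\intprod\dv u>0$ on $\{u=0\}$, but $\partial_z\intprod\dv(-2z)=-2<0$: the Reeb field crosses the dividing set in the \emph{wrong} direction. Condition~\ref{item:admissible-converge} fails too, since under the identification of a punctured Darboux ball with $\Op(\infty)$ the form $\alpha_\std$ decays like $e^{-c\rho}$. Thus $\alpha_\std$ near $\pt$ is not even close to admissible, and the interpolation of Lemma~\ref{lem:interpolation-admissible-forms}/Lemma~\ref{lem:Rn-no-reeb-orbits} — which presupposes that \emph{both} input forms already satisfy the local admissibility conditions — cannot be applied as you describe. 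Similarly, the rescaling argument of Proposition~\ref{propn:interpolating-form} rests on Lemma~\ref{lem:scaling-form}, which requires the form being rescaled to be $(X,t)$-admissible; this is not available for the Brieskorn form near $\pt$.

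The upshot is that to pass from the Brieskorn form to an admissible form, the Reeb field must reverse its direction with respect to the dividing set somewhere near the puncture, and this reversal generically forces the appearance of new closed Reeb orbits — exactly the ``neck'' orbits in Ustilovsky's contact connected sum, which have \emph{odd} homological degree. The paper does not avoid these; it invokes \cite[Theorem~5.2.1]{ustilovsky-thesis} to get an admissible form on $M\#\R^{2n-1}_\std$ whose orbit count includes precisely one orbit in each odd degree $2n-3\le k < N$, and then proves non-triviality of the homology by a careful counting argument on the ranks $c_k$, valid for \emph{any} differential. Your proposal, by asserting that the chain complex is supported in even degrees, skips this counting argument — but that assertion is not something the interpolation machinery of the paper can deliver, and the paper's use of Ustilovsky's theorem strongly suggests it is false. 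The same gap propagates to your ``infinitely many'' argument, which also relies on a vanishing differential.
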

\begin{proof}
  We will show that $M\setminus\set{\pt}$ has non-trivial contact homology in some positive degree, which by Proposition~\ref{propn:Rn-triv-contact-homology} implies the first claim.

  As a different model for $M\setminus\set{\pt}$, we take the \emph{contact connected sum} $M\#\R^{2n-1}_\std$ (see \cite[Chapter~5]{ustilovsky-thesis}). This is contactomorphic to $M\setminus\set{\pt}$ since the standard contact sphere $\s^{2n-1}_\std$ is a unit for the contact connected sum, and since $\R^{2n-1}_\std\cong\s^{2n-1}_\std\setminus\set{\pt}$.

  Next, let us equip $\R^{2n-1}_\std$ with a contact form as in Lemma~\ref{lem:Rn-no-reeb-orbits} and $M$ with a contact form as in Proposition~\ref{propn:ustilovsky-sphere-degrees-1mod4}~or~\ref{propn:ustilovsky-sphere-degrees-3mod4}. We then have by \cite[Theorem~5.2.1]{ustilovsky-thesis} that, for any $N$, there is a contact form $\alpha$ on $M\#\R^{2n-1}_\std$ such that, for each even $k\le N$, the number of Reeb orbits of homological degree $k$ is as given in Propositions~\ref{propn:ustilovsky-sphere-degrees-1mod4}~and~\ref{propn:ustilovsky-sphere-degrees-3mod4},  there is exactly one Reeb orbit of homological degree $k$ for odd $2n-3\le k<N$, and there are no Reeb orbits of homological degree $<2n-4$ (note that \emph{loc. cit.} is stated in terms of Conley-Zehnder index, not homological degree).

  The ``index-positivity'' required in \emph{loc. cit.} is established on \cite[p.~67]{ustilovsky-thesis} in the $4m+1$ case, and the same argument given there also applies in the $4m+3$ case. The form on $\R^{2n-1}_\std$ is trivially index-positive since it has no Reeb orbits.

  We also note that the contact form $\alpha$ constructed in the just-cited theorem agrees with the given contact form on $\R^{2n-1}_\std$ outside of a compact set, and is in particular still admissible with respect to a convex structure. Hence, we may compute $\CH_\bullet(M\#\R^{2n-1}_\std,\xi)$ (with $\xi=\ker\alpha$) as the homology of $\CC_\bullet\defeq\CC_\bullet(M\#\R^{2n-1}_\std,\xi)_{\alpha}$ with respect to some differential, and we would like to show that the resulting homology is non-trivial (in fact, for \emph{any} differential).

  We have that (for sufficiently large choice of $N$ above) $\CC_\bullet$ has the number $c_k$ of generators of degree $k$ for $k=1,\ldots,K$ displayed below (with $K=4n-7$ in the case $2n-1=4m+1$ and $K=4n-9$ in the case $2n-1=4m+3$).
\[
  \begin{cases}
    c_k=0&\text{for }0<k<2n-4\\
    c_k=1&\text{for }2n-4\le k\le 4n-9\\
    c_{4n-8}=3\\
    c_{4n-7}=2
  \end{cases}
  \quad\quad
  \begin{cases}
    c_k=0&\text{for }0<k<2n-4\\
    c_k=1&\text{for }2n-4\le k\le 4n-11\\
    c_{4n-10}=2\\
    c_{4n-9}=1
  \end{cases}
\]
This is easily seen by recalling that the generators of degree $k$ are in bijection with multisets of ``good'' (see \cite[\S1.2]{pardon-contact-homology-and-vfc}) Reeb orbits with homological degrees summing to $k$, and with each odd-degree orbit occurring with multiplicity at most one, and referring to the number of Reeb orbits of $\alpha$ in each homological degree described above. Here, all of the Reeb orbits are good since, by the computation in \cite[Lemma~4.2]{ustilovsky-infinitely-many-spheres}, none of the eigenvalues of their linearized return maps are real.

  We now claim that any chain complex $(C_\bullet,\partial)$ with the number of generators in the given degrees displayed above has non-trivial homology in a positive a degree, which we prove by contradiction. We treat the $2n-1=4m+1$ case; the argument in the $4m+3$ case is similar.

  Suppose that $\Hm_k(C_\bullet)=0$ for all $k>0$. Now, the generator in degree $2n-4$ must be a cycle. Hence, since $\Hm_{2n-4}(C_\bullet)=0$, the differential $\partial_{2n-3}\colon C_{2n-3}\to C_{2n-4}$ must be an isomorphism. Since $\partial^2=0$, it follows that $\partial_{2n-2}=0$. Continuing in this way, we find that $\partial_k=0$ for each even $2n-2\le k\le 4n-8$. Hence all three generators in $C_{4n-8}$ are cycles, but they cannot all be boundaries since $C_{4n-7}$ is 2-dimensional.

  To prove the last statement of the theorem, we consider the contact connected sum $(\#_{i=1}^rM)\#\R^{2n-1}_\std$ of $r$ copies of $M$ and $\R^{2n-1}_\std$, which is again diffeomorphic to $\R^{2n-1}$. This is still tight since the contact connected sum of symplectically fillable contact manifolds is still symplectically fillable. Again appealing to \cite[Theorem~5.2.1]{ustilovsky-thesis}, we can find a contact form $\alpha$ with $2r$ Reeb orbits in homological degree $K-1$ (with $K$ either $4n-7$ or $4n-9$) and $r$ Reeb orbits in each other homological degree $2n-4\le k\le K$.
  Thus, the number $c_k$ of generators in each degree is given by
  \[
    \begin{cases}
      c_k=0&0<k<2n-4\\
      c_k=r&2n-4\le k\le 4n-9\\
      c_{4n-8}=2r+\frac{r(r+1)}2\\
      c_{4n-7}=r+r^2
    \end{cases}
    \quad
    \begin{cases}
      c_k=0&0<k<2n-4\\
      c_k=r&2n-4\le k\le 4n-11\\
      c_{4n-10}=2r\\
      c_{4n-9}=r.
    \end{cases}
  \]
  Let us consider the case $2n-1=4m+1$; again, the other case is similar.

  We know a little more than just the number of generators in each degree: we have decompositions $\CC_{4n-8}=\CC^{\new}_{4n-8}\oplus\CC^{\old}_{4n-8}$ and $\CC_{4n-7}=\CC^{\new}_{4n-7}\oplus\CC^{\old}_{4n-7}$, with $\dim(\CC^{\new}_{4n-8})=2r$ and $\dim(\CC^{\new}_{4n-7})=r$, and moreover $\partial(\CC^{\old}_{4n-7})\subset\CC^{\old}_{4n-8}$.

  We claim that for such a chain complex $(C_\bullet,\partial)$, we have $\sum_{k=2n-4}^{4n-8}\dim\Hm_k(C_\bullet)\ge r$, which implies our claim.

  We can prove this, for example, as follows. Consider the truncation $C'_\bullet$ of $C_\bullet$ to degrees $k\le 4n-8$, and in which we only keep the summand $C^{\new}_{4n-8}$ of $C_{4n-8}$.
  The Euler characteristic $\chi(C'_\bullet)$ is $2r$, and hence $\sum_{k=2n-4}^{4n-8}\dim\Hm_k(C'_\bullet)\ge 2r$.
  But since $\dim(C^\new_{4n-7})=r$ and $\partial(C^{\old}_{4n-7})\subset C^\old_{4n-8}$, it then follows that $\sum_{k=2n-4}^{4n-8}\dim\Hm_k(C_\bullet)\ge r$ as desired.
\end{proof}

\subsection{Hypertight contact manifolds}
The same approach as above can be used to prove the existence of infinitely many tight contact structures on $V\setminus\set{\pt}$ whenever $V$ is a \emph{hypertight} contact manifold with vanishing (or more generally torsion) first Chern class.

We recall:
\begin{defn}
  A contact manifold $(V,\xi)$ is \defword{hypertight} if it admits a Reeb vector field with no contractible Reeb orbits.
\end{defn}
By \cite{albers-hofer-weinstein-higher-dim}, every hypertight contact manifold is tight.
\begin{defn}
  The first Chern class $\ch_1(\xi)$ of a contact structure is the first Chern class of the complex vector bundle $(V,J)$, where $J$ is a $\dv\alpha$-compatible complex structure for some contact form $\alpha$ representing $\xi$ (this is independent of the choice of $\alpha$ and $J$).
\end{defn}

We give an example to show that such contact manifolds exist.
Recall that a symplectic form $\omega$ on $M$ is is \emph{symplectically aspherical} if $\int_{\s^2}u^*\omega=0$ for any $u\colon\s^2\to M$, and let us call $\omega$ \emph{semi-monotone} if the first Chern class $\ch_1(TM)$ (with respect to a compatible almost-complex structure) is a multiple $\lambda[\omega]$ of the de Rham cohomology class $[\omega]$.\footnote{
  In particular, we may have $\ch_1(TM)=0$.
  We recall that the case $\lambda>0$ is called \emph{monotone}.
  We do not know of an established term for what we call semi-monotone.
  The case $\lambda<0$ has been called \emph{negative monotone}.
}
The class of semi-monotone and symplectically aspherical closed symplectic manifolds includes $\T^{2n}$ -- and more generally contains surfaces of positive genus and is closed under products -- as well as varieties with ample canonical bundle and vanishing $\pi_2$, such as fake projective planes.
\begin{propn}
  Suppose $(M,\omega)$ is a closed, semi-monotone, symplectically aspherical symplectic manifold, with $\omega$ integral, i.e., $[\omega]\in\im(\Hm^2(M;\Z)\to\Hm^2_{\mathrm{dR}}(M);\R)$.
  Then the associated prequantization space $(V,\xi)$ is hypertight and satisfies $\ch_1(\xi)\cdot\Hm_2(X;\Z)=\set{0}$ (i.e., $\ch_1(\xi)$ is torsion).
\end{propn}

\begin{proof}
  Recall that the prequantization space (or \emph{Boothby-Wang construction}) associated to $(M,\omega)$ is the principle $\s^1$-bundle $\pi\colon V\to M$ with Euler class $[\omega]\in\Hm^2(M;\Z)$.
  The contact structure on $V$ arises as the connection one-form of an arbitrary principal connection.
  For details, see \cite[\S7.2]{geiges-intro-contact}.

  The Reeb vector field $\Rb_\alpha$ is simply given by the circle action on the fibers, hence all Reeb orbits are in the homotopy class of the fiber.
  In order for this to be non-trivial, it suffices by the fibration sequence
  \[
    \cdots\to\pi_2(V)\to\pi_2(M)\to\pi_1(\s^1)\to\pi_1(V)\to\cdots
  \]
  that $\pi_*\colon\pi_2(V)\to\pi_2(M)$ be surjective, and this corresponds to the restriction $u^*V$ of $\pi\colon V\to M$ to every sphere $u\colon\s^2\to V$ being trivial, which is equivalent (by the naturality of the Euler class) to the symplectic asphericity condition.

  Finally, we have that $\xi\cong\pi^*\Tn M$ and hence $\ch_1(\xi)=\pi^*\ch_1(\Tn M)$.
  By the Gysin sequence
  \[
    \cdots\to\Hm^0(M;\Z)\tox{[\omega]\cup}\Hm^2(M;\Z)\tox{\pi^*}\Hm^2(V;\Z)\to\cdots
  \]
  we thus have that $\ch_1(\xi)$ is torsion if and only if $(M,\omega)$ is semi-monotone.
\end{proof}

\begin{thm}\label{thm:hypertight}
  If $V$ is a manifold of dimension $>3$ admitting a hypertight contact structure $\xi$ with $\ch_1(\xi)\cdot\Hm_2(V;\Z)=\set{0}$, then $V$ and $V\setminus\set{\pt}$ both admit infinitely many pairwise non-contactomorphic tight contact structures.
\end{thm}

\begin{proof}
  The main new ingredient needed here is to use the variant $\CH^\contr$ of contact homology generated only by the \emph{contractible} Reeb orbits; see \cite[\S1.8]{pardon-contact-homology-and-vfc}.
  This variant immediately carries over to the present convex open case.

  We now obtain contact forms on $V\setminus\set{\pt}$ by starting with a given hypertight contact form on $V$ and then forming the contact connected sum $V\#W_r$ with the contact manifolds $W_r\defeq(\#_{i=1}^rM)\#\R^{2n-1}_\std$ from the previous section.
  Since $\Hm_2(W_r;\Z)\cong0$, it follows (from Mayer-Vietoris and naturality of the Chern clsas) that the resulting contact structure $\xi$ on $V\#W_r$ again satisfies $\ch_1(\xi)\cdot\Hm_2(V;\Z)=\set{0}$, and hence $\CH^\contr(V\#W_r)$ receives a $\Z$-grading (again, see \cite[\S1.8]{pardon-contact-homology-and-vfc}).

  Since $V$ has by assumption no contractible Reeb orbits, and all of the additional orbits appearing in the contact connected sum are contractible, the computation of the number of generators of $\CH^\contr(V\#W_r)$ proceeds precisely as in the previous section, and so in the same way we see that there are infinitely many non-isomorphic contact manifolds among the $V\#W_r$.

  The claim about $V$ (as opposed to $V\setminus\set{\pt})$ is proven in exactly the same way, by considering the contact manifolds $V\#(\#_{i=1}^rM)$ (and using contact homology for closed manifolds).
\end{proof}

\printbibliography
\end{document}